\newcommand{\dif}{\mathop{}\mathrm{d}}
\newcommand{\n}{\mathop{}\mathrm{n}}
\newcommand{\bbR}{\mathbb{R}}
\newcommand{\bbQ}{\mathbb{Q}}
\newcommand{\bl}{\boldsymbol}
\newcommand{\T}{\mathcal{T}}
\newcommand{\A}{\mathcal{A}}
\newcommand{\E}{\mathcal{E}}
\newcommand{\wt}{\widetilde}
\newcommand{\lal}{\langle}
\newcommand{\ral}{\rangle}
\newcommand{\pt}{\partial}
\newtheorem{thm}{Theorem}[section]
\newtheorem{lem}{Lemma}[section]
\newtheorem{re}{Remark}[section]
\title{A boundary-corrected weak Galerkin mixed finite method for elliptic interface problems with curved interfaces}
\author{Yongli Hou, Yi Liu\footnote{Corresponding author},  Yanqiu Wang}
\begin{document}
\maketitle

\begin{abstract}
We propose a boundary-corrected weak Galerkin mixed finite element method for solving elliptic interface problems in 2D domains with curved interfaces. 
The method is formulated on body-fitted polygonal meshes, where interface edges are straight and may not align exactly with the curved physical interface. 
To address this discrepancy, a boundary value correction technique is employed to transfer the interface conditions from the physical interface to the approximate interface using a Taylor expansion approach. 
The Neumann interface condition is then weakly imposed in the variational formulation. This approach eliminates the need for numerical integration on curved elements, thereby reducing implementation complexity. 
We establish optimal-order convergence in the energy norm for arbitrary-order discretizations. 
Numerical results are provided to support the theoretical findings.

\vspace{0.2cm}
\noindent\textbf{Key words}: weak Galerkin method, boundary value corrections, mixed formulation, polygonal mesh, curved interface.
\end{abstract}

\section{Introduction}


The goal of this paper is to propose and analyze a discretization method for elliptic interface problems, which arise in various engineering and industrial applications. 
In practice, such problems often involve curved interfaces. When approximated on polygonal computational domains, the discrepancy between the physical interface  $\Gamma$ and its polygonal approximation $\Gamma_h$ introduces an additional geometric error.
The geometric discrepancy can lead to a loss of accuracy \cite{Blair_Bounds_for_the_change_in_the_solutions1973,Strang_Berger_The_change_in_solution_due_to_change_in_domain1973,Thomee_Polygonal_domain_approximation_in_Dirichlet's_problem1973}, particularly for higher-order discretizations. One approach to mitigating this issue is to reduce geometric error, as in the isoparametric finite element method \cite{isoparametric_1968,Optimal_isoparametric_finite_elements_1986} and isogeometric analysis \cite{Isogeometric_Analysis_Cottrell_2009,Isogeometric_analysis_Hughes_2005}. An alternative strategy is the boundary value correction method \cite{1972,SBM_interface,Part1}, which employs a Taylor expansion to transfer the boundary conditions from the true boundary to the approximate boundary, leading to a modified variational formulation.
Among these approaches, the shifted boundary method \cite{SBM_interface,Part1} is formulated on unfitted meshes. However, the shifted interface method \cite{SBM_interface} for the linear element has only been validated through numerical results and lacks rigorous analysis. A key advantage of the boundary value correction method is that it eliminates the need for numerical integration on curved elements, thereby reducing implementation complexity.
We also highlight recent discretizations based on the boundary value correction method, such as \cite{Burman_boundary_value_correction_2018,Gunzburger_2019,Cockburn_Boundary_conforming_DG_2009,Cockburn2012,hou_JSC,Yiliu_WG}. In recent years, discretizations on polygonal meshes have gained increasing attention due to their flexibility in approximation. 
Several numerical schemes have been proposed for polygonal meshes, including the virtual element method (VEM) \cite{vem2013,da_veiga_2019}, the discontinuous Galerkin (DG) method \cite{DG_2009,DG_1,WG_Mu_Wang}, and the weak Galerkin (WG) method \cite{WG_2015,WG_2013,WG_MFEM_elliptic}, among others. 
Although polygonal meshes have been extensively studied in recent years, relatively few results address problems involving curved boundaries or interfaces. 
Notably, the works in \cite{da_veiga_2020,Yi_Liu_nonconforming_VEM,da_veiga_2019,VEM_MFEM_curve,Guan_WG_curv_2023,Dan_Li_curve_2024,Dan_Li_curve_interface_2024,Lin_Mu_curve_2021} employ specialized techniques that are directly applicable to curved domains but require numerical integration on curved elements. 
In contrast, the boundary-corrected VEM \cite{High_order_VEM_2D3D,hou_JSC} follows the second strategy, avoiding curved mesh elements.

In this paper, we study the weak Galerkin mixed finite element method (WG-MFEM) for elliptic interface problems with curved interfaces. 
The WG method, originally proposed by Wang and Ye in \cite{WG_2013}, has since been extended to various applications \cite{Wang_DS_WG,Dan_Li_curve_2024,Dan_Li_curve_interface_2024,Yiliu_WG,Yiliu_WG_SBM,WG_newStokes,WG_elasticity,WG_MFEM_elliptic,WG_stokes}. 
In particular, WG-MFEM has been applied to second-order elliptic equations \cite{Yiliu_WG,WG_MFEM_elliptic} and the Darcy-Stokes problem \cite{Wang_DS_WG}. 
However, a theoretical gap remains in the analysis of WG-MFEM for problems with curved interfaces, which this paper aims to address. 
Furthermore, to avoid numerical integration on curved elements, we incorporate the boundary value correction technique into WG-MFEM on polygonal meshes. 
In the mixed finite element method, the Neumann interface condition
becomes essential. Since this condition depends on the outward normal vector along the interface, discrepancies between $\Gamma$ and its polygonal approximation $\Gamma_h$ arise a subtle challenge. 
It is therefore necessary to develop effective strategies to transfer the Neumann interface condition from $\Gamma$ to $\Gamma_h$.
In our approach, we apply a boundary value correction technique to map the interface data from $\Gamma$ to $\Gamma_h$.
Following \cite{Puppi_mixed}, the Neumann interface condition is then imposed weakly in the variational formulation using a penalty method. To enhance stability, we incorporate an additional term $(\nabla_w\cdot,\nabla_w\cdot)$ in the discrete formulation, inspired by \cite{Pei_Cao_XFEM_CMA}.

The paper is organized as follows. 
In Section \ref{sec2:notation}, we introduce some notation and setting;
In Section \ref{sec3:model_problem}, we describe the model problem and introduce the boundary value correction technique.
In Section \ref{sec4:discrete}, the discrete spaces and the discrete form are defined and analyzed. 
In Section \ref{sec5:err}, an optimal error estimate is proved. 
In Section \ref{sec6:numerical}, we display several numerical experiments to verify the theoretical results. 
We end in Section \ref{sec7:conclusion} with some conclusions.

\section{Notations and preliminaries}\label{sec2:notation}

We assumed that $\Omega$ is an open bounded domain in $\bbR^2$, with a convex polygonal boundary $\pt\Omega$ 
and a Lipschitz continuous and piecewise $C^2$ internal interface $\Gamma$. By `internal' we mean that $\Gamma$ is a closed curve that does not intersect with $\partial\Omega$. The interface $\Gamma$ divides $\Omega$ into two open subsets: $\Omega_1$ inside $\Gamma$ and $\Omega_2$ between $\Gamma$ and $\partial\Omega$, as shown in Fig. \ref{Fig:mesh_domain} (a). We have $\bar\Omega=\bar{\Omega}_1\cup \bar{\Omega}_2$ 
and $\Omega_1\cap\Omega_2=\emptyset$.
Let $\T_{h,i}$ be a body-fitted polygonal mesh in $\Omega_i,\,i=1,2$, that is, all the interface vertices of $\T_{h,i}$ lie on $\Gamma$ 
and all the vertices of $\T_{h,1}$ on $\Gamma$ coincide with those of $\T_{h,2}$,
as shown in Fig. \ref{Fig:mesh_domain} (b). Denote by $\Omega_{h,i}$ the polygonal region occupied by the polygonal mesh $\T_{h,i}$, 
and by $\Gamma_h=\bar\Omega_{h,1}\cap\bar\Omega_{h,2}$ the approximation interface. 
Denote by $\bl\n_h$ the normal vector pointing from $\Omega_{h,1}$ to $\Omega_{h,2}$ on $\Gamma_h$, as illustrated in Fig. \ref{Fig:mesh_domain} (b). 
Let $\T_h=\T_{h,1}\cup\T_{h,2}$, throughout the paper, we assume that the mesh $\T_h$ satisfies the shape regularity conditions proposed in \cite{WG_MFEM_elliptic}.
For each polygon $K\in\T_h$, denote by $h_K$ the diameter of $K$, and set $h:=\max_{K\in\T_h}h_K$. Let $\T_h^\Gamma$ be the set of all interface elements, that is, for $K\in \T_h^\Gamma$, we have $K\cap \Gamma\neq \emptyset$. Also, 
denote by $\E_h$ the set of all edges in $\T_h$,
by $\E_h^{\Gamma}$ the set of all interface edges in $\T_h$.
For each edge $e\in\E_h^\Gamma$, let $\wt e\subset \Gamma$ be the curved edge cut by two endpoints of $e$.
When $\Omega$ has a curved interface, the polygonal region $\Omega_{h,i}$ does not coincide
with $\Omega_i$.
We use $\Omega_h^e$ to denote a crescent-shaped region surrounded by $e$ and $\wt e$, as shown in Fig. \ref{Fig:Omega_e}. 
There are three possibilities: (a) $\Omega_h^e\subset \Omega_{1}\backslash\Omega_{h,1}$;
(b) $\Omega_h^e\subset \Omega_{h,1}\backslash\Omega_1$; (c) $\Omega_h^e=\emptyset$.
\begin{figure}[!htbp]
  \centering
  \subfigure[]
  {
  \includegraphics[height=4cm,width=6cm]{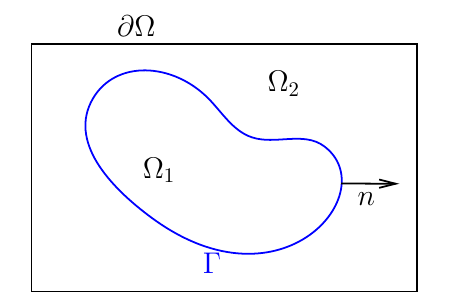}
  }\qquad
  \subfigure[]
  {
  \includegraphics[height=4cm,width=6cm]{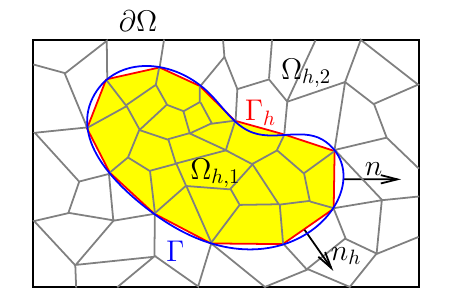}
  }
  \caption{(a). The physical domain $\Omega$ is divided into $\Omega_1$ and $\Omega_2$.
  (b). The approximation domain $\Omega_{h,1}$ and $\Omega_{h,2}$, with $\Omega_{h,2}$ shaded in yellow.}
\label{Fig:mesh_domain}
\end{figure}

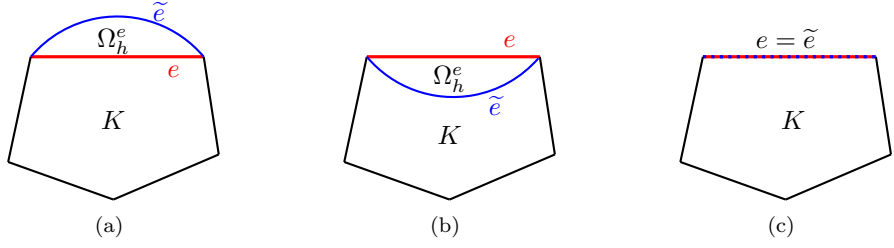
\begin{figure}[!htbp]
  \centering
  \subfigure[]
  {
  \begin{tikzpicture}
  \draw[red,very thick] (-1.3,0)--(1,0);
  \draw[thick] (-1.3,0)--(-1.6,-1.4);
  \draw[thick] (-1.6,-1.4)--(-0.2,-1.9);
  \draw[thick] (-0.2,-1.9)--(1.2,-1.3);
  \draw[thick] (1.2,-1.3)--(1,0);
  \draw[blue,thick] (1,0) arc (40:140:1.5);
  \draw (-0.2,0.5) node [below] {$\Omega_h^e$};
  \draw[red] (0.6,0) node [below] {$e$};
  \draw[] (-0.2,-0.6) node [below] {$K$};
  \draw[blue] (0.2,0.6) node [right] {$\wt e$};
  \end{tikzpicture}
  }\qquad\qquad
  \subfigure[]
  {
  \begin{tikzpicture}
  \draw[red,very thick] (-1.3,0)--(1,0);
  \draw[thick] (-1.3,0)--(-1.6,-1.4);
  \draw[thick] (-1.6,-1.4)--(-0.2,-1.9);
  \draw[thick] (-0.2,-1.9)--(1.2,-1.3);
  \draw[thick] (1.2,-1.3)--(1,0);
  \draw[blue,thick] (-1.3,0) arc (220:319:1.5);
  \draw (-0.2,0) node [below] {$\Omega_h^e$};
  \draw[red] (0.6,0) node [above] {$e$};
  \draw[] (-0.2,-0.8) node [below] {$K$};
  \draw[blue] (0.2,-0.65) node [right] {$\wt e$};
  \end{tikzpicture}
  }\qquad\qquad
  \subfigure[]
  {
  \begin{tikzpicture}
  \draw[red,very thick] (-1.3,0)--(1,0);
  \draw[blue,dotted,very thick] (-1.3,0)--(1,0);
  \draw[thick] (-1.3,0)--(-1.6,-1.4);
  \draw[thick] (-1.6,-1.4)--(-0.2,-1.9);
  \draw[thick] (-0.2,-1.9)--(1.2,-1.3);
  \draw[thick] (1.2,-1.3)--(1,0);
  \draw[] (-0.2,0.5) node [below] {$e=\wt{e}$};
  \draw[] (-0.1,-0.6) node [below] {$K$};
  \end{tikzpicture}
  }\qquad\qquad
  \caption{For $K\in\T_{h,1}$, there are three possibilities for $\Omega_h^e$. (a). The case for $\Omega_h^e\subset \Omega_1\backslash \Omega_{h,1}$.
  (b). The case for $\Omega_h^e\subset \Omega_{h,1}\backslash \Omega_1$.
  (c). The case for $e=\wt{e}$ and $\Omega_h^e=\emptyset$.}
\label{Fig:Omega_e}
\end{figure}

We use standard definitions for the Sobolev spaces \cite{FEM_Brenner}. 
Let $H^m(S)$, for $m\in\bbR$ and $S\subset\bbR^2$, be the usual Sobolev space equipped with the norm $\|\cdot\|_{m,S}$ and the seminorm $|\cdot|_{m,S}$.
When $m=0$, the space $H^0(S)$ coincides with the square integrable space $L^2(S)$.
We use bold face letters to distinguish the vector-valued function spaces
from the scalar-valued ones.
For example, $\bl H^m(S)=[H^m(S)]^2$ is the space of vector-valued functions with each component in $H^m(S)$, respectively.
Denote by $L_0^2(S)$ the subspace of $L^2(S)$ consisting of functions with mean value zero.
Define $H^m(\T_h):=\prod_{K\in\T_h}H^m(K)$ with the seminorm $|\cdot|_{m,\T_h}:=(\sum_{K\in\T_h}|\cdot|_{m,K}^2)^{1/2}$.
The above notation extends to a portion $s\subset\Gamma$ or $s\subset\Gamma_h$.
For example, $\|\cdot\|_{m,s}$ is the Sobolev norm on the curve $s$. 
We use the notation $(\cdot,\cdot)_S$ to indicate the $L^2$ inner-product on $S\subset\bbR^2$,
and $\lal\cdot,\cdot\ral_s$ to indicate the duality pair on $s\subset\Gamma$ or $s\subset\Gamma_h$. Moreover, define
\begin{equation*}
\begin{aligned}
\bl H(\mathrm{div},S)&=\{\bl v\in \bl L^2(S):\mathrm{div}\,\bl v \in L^2(S)\},\\
\bl H_0(\mathrm{div},S)&=\{\bl v\in \bl H(\mathrm{div},S):\bl v\cdot\bl\n=0\, \text{on}\,\pt S\}.\\
\end{aligned}
\end{equation*}
The norm in $\bl H(\mathrm{div},S)$ is defined by
\begin{align*}
    \|\bl v\|_{\bl H(\mathrm{div},S)}=(\|\bl v\|_{0,S}^2+\|\mathrm{div}\,\bl v\|_{0,S}^2)^{1/2}.
\end{align*}

For $S=S_1\cup S_2$, where the interior of $S_1$ and $S_2$ does not intersect, define $H^m(S_1\cup S_2)= H^m(S_1)\times H^m(S_2)$. 
When $\gamma= \bar{S}_1\cap\bar{S}_2 $ is an edge or a collection of edges, for $v\in H^m(S_1\cup S_2)$, we define the norm, jump and average on $\gamma$ by
\begin{align*}
    &\|v\|_{m,S_1\cup S_2}=\bigg(\sum_{i=1}^2\|v\|_{m,S_i}^2\bigg)^{1/2},\\
    [v] =&~ v_1-v_2 ~\text{on}~\gamma,\qquad \{v\} = \dfrac{v_1+v_2 }{2}~\text{on}~\gamma,
\end{align*}
where $v_i=v|_{\Omega_i}$,$i=1,2$.

   Throughout the paper, we use the notation $\lesssim$ to denote less than or equal to up to a constant, and the analogous notation $\gtrsim$ to denote greater than or equal to up to a constant.

   Finally, we introduce some useful inequalities.
  
   \begin{lem}\label{lem:TRACE}
    (Trace Inequality \cite{Cangiani_hp_Version,WG_Mu_Wang,WG_MFEM_elliptic}). For $K\in\T_h$, we have
    \begin{equation*}
    \begin{aligned}
    \|v\|_{0,\pt K}&\lesssim h_K^{-1/2}\| v\|_{0,K}+h_K^{1/2}|v|_{1,K},\qquad \forall v\in H^1(K).\\
    \end{aligned}
    \end{equation*}
    \end{lem}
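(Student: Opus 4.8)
The plan is to establish the estimate directly on the polygon $K$ via the divergence theorem, so that no reference element is required; this sidesteps the usual difficulty that polygonal meshes admit no single affine model cell. Under the shape regularity assumed in \cite{WG_MFEM_elliptic}, each $K\in\T_h$ is star-shaped with respect to a disk $B(\bl x_0,\rho h_K)$ centered at some interior point $\bl x_0$, with $\rho$ uniform over the mesh. The first geometric fact I would extract is the lower bound
$$(\bl x-\bl x_0)\cdot\bl n \geq \rho\, h_K \quad \text{for a.e. } \bl x\in\pt K,$$
where $\bl n$ is the outward unit normal. This holds because, on each straight edge of $K$, the signed distance from $\bl x_0$ to the line containing that edge is at least the radius $\rho h_K$ of the inscribed disk.

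With this in hand, the core step is to apply the divergence theorem to the vector field $v^2(\bl x-\bl x_0)$. Using $\nabla\cdot(\bl x-\bl x_0)=2$ in $\bbR^2$, this yields the identity
$$\int_{\pt K}v^2\,(\bl x-\bl x_0)\cdot\bl n\,\dif s = \int_K 2v^2\,\dif\bl x + \int_K 2v\,(\bl x-\bl x_0)\cdot\nabla v\,\dif\bl x.$$
Bounding the left-hand side below with the geometric inequality above, and the right-hand side above via $|\bl x-\bl x_0|\le h_K$ together with Cauchy--Schwarz on the second integral, gives
$$\rho\, h_K\,\|v\|_{0,\pt K}^2 \lesssim \|v\|_{0,K}^2 + h_K\,\|v\|_{0,K}\,|v|_{1,K}.$$

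Finally I would divide by $\rho h_K$, control the mixed term through Young's inequality in the form $\|v\|_{0,K}|v|_{1,K}\le \tfrac12 h_K^{-1}\|v\|_{0,K}^2+\tfrac12 h_K|v|_{1,K}^2$, and take square roots (using $\sqrt{a+b}\le\sqrt a+\sqrt b$) to recover the stated bound. The density of $C^\infty(K)$ in $H^1(K)$ justifies the use of the divergence identity for general $v\in H^1(K)$. I expect the only genuinely delicate point to be the first step: verifying that shape regularity furnishes a uniform star-shapedness constant $\rho$, and hence the lower bound on $(\bl x-\bl x_0)\cdot\bl n$, independently of the number of edges of $K$. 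Once that geometric fact is secured, the remaining estimates are routine. An alternative route would subdivide each $K$ into a uniformly shape-regular family of triangles and apply the classical scaling-based trace inequality on each before summing; this avoids the star-shapedness bound but requires controlling the auxiliary subtriangulation, so I would favor the divergence-theorem argument above.
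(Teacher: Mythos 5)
The paper offers no proof of this lemma at all: it is quoted directly from \cite{Cangiani_hp_Version,WG_Mu_Wang,WG_MFEM_elliptic}, so there is no in-paper argument to compare against. Your divergence-theorem proof is correct and self-contained. The identity $\int_{\pt K}v^2(\bl x-\bl x_0)\cdot\bl\n\dif s=2\int_K v^2\dif\bl x+2\int_K v(\bl x-\bl x_0)\cdot\nabla v\dif\bl x$, the lower bound $(\bl x-\bl x_0)\cdot\bl\n\ge\rho h_K$ on $\pt K$ (which follows from the cone/supporting-line argument you sketch: for $\bl y=\bl x_0+\rho h_K\bl\n\in \overline{B(\bl x_0,\rho h_K)}$ one has $(\bl y-\bl x)\cdot\bl\n\le 0$ at a.e. boundary point $\bl x$), and the subsequent Cauchy--Schwarz/Young steps all check out, and the density argument handles general $v\in H^1(K)$. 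The one hypothesis you rightly flag --- uniform star-shapedness of each $K$ with respect to a disk of radius $\rho h_K$ --- is indeed part of the shape-regularity conditions of \cite{WG_MFEM_elliptic,WG_Mu_Wang} that the paper assumes, so it is available to you. For what it is worth, the cited references obtain the same inequality by the other route you mention: triangulating $K$ from the center of the inscribed disk into uniformly shape-regular triangles and applying the classical scaled trace inequality edge by edge. Your single-integration-by-parts argument buys a shorter, reference-free derivation at the cost of having to verify the normal-component lower bound once; the two approaches are essentially equivalent in content and both rest on the same geometric assumption.
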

    
    \begin{lem}\label{lem:inverse}
    (Inverse Inequality \cite{FEM_Brenner}). Given integers $0\leq m\leq l$. For $K\in\T_h$, it holds
    \begin{equation*}
    \begin{aligned}
    |q|_{l, K}\lesssim h_K^{m-l}| q|_{m,K},\qquad \forall q\in P_l(K).
    \end{aligned}
    \end{equation*}
    \end{lem}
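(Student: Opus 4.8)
The plan is to reduce the inequality on a general polygon $K$ to a scale-invariant statement on a normalized element, using a pure dilation together with the equivalence of all norms on the finite-dimensional space $P_l$.

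First I would introduce the dilation $x\mapsto \hat x=(x-x_K)/h_K$, with $x_K$ a fixed interior point of $K$, mapping $K$ onto a scaled element $\hat K$ of unit diameter, and set $\hat q(\hat x)=q(x)$. Since each differentiation in $x$ produces a factor $h_K^{-1}$ while the area element scales as $\dif x=h_K^2\dif\hat x$, the chain rule gives, for every integer $s\ge 0$,
\begin{equation*}
|q|_{s,K}=h_K^{1-s}\,|\hat q|_{s,\hat K}.
\end{equation*}
Applying this with $s=l$ and $s=m$ shows that the claimed estimate is equivalent to the scale-free bound $|\hat q|_{l,\hat K}\lesssim |\hat q|_{m,\hat K}$ on the normalized element.

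Next I would establish this scale-free bound with a constant independent of $K$. On any fixed ball, $\hat q\mapsto |\hat q|_{m}$ is a norm on the finite-dimensional quotient $P_l/P_{m-1}$ --- indeed $|\hat q|_m=0$ forces $\hat q\in P_{m-1}$, on which $|\hat q|_l$ vanishes as well since $l\ge m$ --- and hence it dominates the continuous seminorm $|\hat q|_{l}$; by translation invariance this holds on the unit ball centered at any point with a single fixed constant. The only delicate point is that $\hat K$ is not one fixed domain but varies with $K$, and here the shape-regularity of $\T_h$ enters: it guarantees that $\hat K$ contains a ball $B(x_0,\rho)$ with $\rho$ bounded below by a fixed constant, while the unit diameter of $\hat K$ gives $\hat K\subseteq B(x_0,1)$. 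Chaining the inclusions $B(x_0,\rho)\subseteq\hat K\subseteq B(x_0,1)$ with the norm-equivalence on the unit ball and the comparability of the $|\cdot|_m$ seminorm on two concentric balls of comparable radii then yields $|\hat q|_{l,\hat K}\lesssim |\hat q|_{m,\hat K}$ uniformly.

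Finally I would transform back: combining $|q|_{l,K}=h_K^{1-l}|\hat q|_{l,\hat K}$, the scale-free bound, and $|\hat q|_{m,\hat K}=h_K^{m-1}|q|_{m,K}$ gives $|q|_{l,K}\lesssim h_K^{m-l}|q|_{m,K}$, as claimed. The main obstacle is exactly the uniformity of the constant in the scale-free step: for simplicial meshes it is immediate since every element is the affine image of one reference simplex, whereas on polygonal meshes no single reference element exists, and one must invoke shape-regularity to confine the dilated elements to a compact family before appealing to finite-dimensionality.
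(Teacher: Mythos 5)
Your argument is correct and is essentially the standard proof that the paper delegates to the cited reference: rescale to a unit-diameter element, use equivalence of the seminorms $|\cdot|_l$ and $|\cdot|_m$ on the finite-dimensional quotient $P_l/P_{m-1}$, and handle the variability of polygonal elements by sandwiching the rescaled element between an inscribed ball of uniformly bounded-below radius (guaranteed by the shape regularity assumptions) and the unit ball. The scaling identities, the kernel identification, and the final exponent bookkeeping all check out, so no gap remains.
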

    
    \begin{lem}\label{lem:BH}
      (Bramble-Hilbert Lemma \cite{FEM_Brenner}). Given an integer $k\geq 0$. For $K\in\T_h$ and integers $0\leq i\leq k+1$, one has
      \begin{equation*}
      \begin{aligned}
      \inf_{\xi\in P_k(K)}\bigg(\sum_{j=0}^i h_K^j|v-\xi|_{j,K}\bigg)\lesssim h_K^i|v|_{i,K},\qquad \forall v\in H^i(K).
      \end{aligned}
      \end{equation*}
    \end{lem}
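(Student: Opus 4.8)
The plan is to reduce the estimate to the standard polynomial approximation bound for the averaged Taylor polynomial developed in \cite{FEM_Brenner}, and to extract a constant that is uniform over the mesh from the shape regularity of $\T_h$. Since the elements of $\T_h$ are general polygons rather than affine images of a single reference cell, the customary reference-element scaling argument is unavailable; instead the whole argument is carried out directly on $K$, with the shape-regularity assumption of \cite{WG_MFEM_elliptic} supplying a disk $B\subset K$ of radius $\gtrsim h_K$ with respect to which $K$ is star-shaped.

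First I would observe that the hypothesis $0\le i\le k+1$ gives $i-1\le k$, hence $P_{i-1}(K)\subseteq P_k(K)$. Consequently it suffices to exhibit a single polynomial $\xi\in P_{i-1}(K)$ for which the bound holds, because the infimum over the larger space $P_k(K)$ can only be smaller. I would take $\xi=Q^{i-1}v$, the degree-$(i-1)$ Taylor polynomial of $v$ averaged against a smooth cutoff supported in $B$; this object is well defined for $v\in H^i(K)$ since $K$ is a Lipschitz domain that is star-shaped with respect to $B$.

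The key ingredient is then the approximation property of the averaged Taylor polynomial on a domain star-shaped with respect to $B$: for each $0\le j\le i$,
\begin{equation*}
|v-Q^{i-1}v|_{j,K}\lesssim h_K^{i-j}|v|_{i,K},\qquad \forall v\in H^i(K),
\end{equation*}
where the hidden constant depends only on $i$ and on the chunkiness parameter of $K$. Multiplying by $h_K^j$ turns each term into a bound of the form $h_K^i|v|_{i,K}$, and summing over $j=0,\dots,i$ yields
\begin{equation*}
\sum_{j=0}^i h_K^j\,|v-Q^{i-1}v|_{j,K}\lesssim h_K^i|v|_{i,K}.
\end{equation*}
Since $Q^{i-1}v\in P_{i-1}(K)\subseteq P_k(K)$, estimating the infimum by this particular choice of $\xi$ completes the argument.

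The main obstacle is ensuring that the constant is uniform across all $K\in\T_h$. Because the elements vary in shape, this uniformity cannot be inherited from a single reference element; it must be read off from shape regularity, which bounds the chunkiness parameter of every $K$ uniformly in $h$ (equivalently, guarantees the star-shapedness disk $B$ has radius comparable to $h_K$). A secondary point requiring care is the mismatch between the regularity of $v$, which lies only in $H^i(K)$, and the admissible polynomial degree $k$, which may exceed $i$; this is resolved precisely by approximating with the lower-degree polynomial $Q^{i-1}v$ rather than a degree-$k$ polynomial, so that only the seminorm $|v|_{i,K}$ appears on the right-hand side.
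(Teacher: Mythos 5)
The paper states this lemma as a known preliminary result cited from \cite{FEM_Brenner} and gives no proof of its own; your argument is precisely the standard proof from that reference (averaged Taylor polynomial on a star-shaped domain, with uniformity of the constant supplied by shape regularity), and it is correct. The observation that $i-1\le k$ lets you approximate by a degree-$(i-1)$ polynomial matching the available regularity is exactly the right point, so nothing further is needed.
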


\section{Model problem and boundary-corrected method}\label{sec3:model_problem}
In this section, we introduce the model problem and briefly review the boundary value correction method \cite{1972}. 
Consider the following Darcy interface problem in $\Omega$:
\begin{equation}
  \left\{
  \begin{aligned}
  \bl u&=-\kappa\nabla p \quad &\text{in} \,&\Omega_1\cup\Omega_2 ,&\\ \label{primal_problem_interface}
  \nabla\cdot \bl u&=f \quad &\text{in} \,&\Omega_1\cup\Omega_2,\\
  \bl u\cdot\bl\n&=0 \quad &\text{on} \,&\pt\Omega,\\
  [p]&=g_D \quad &\text{on} \,&\Gamma,\\
  [\bl u\cdot\bl\n]&=g_N \quad &\text{on} \,&\Gamma,\\
  \end{aligned}
  \right.
  \end{equation}
  where $\bl u$ is the Darcy velocity, $p$ is the pressure and the unit outward normal vector $\bl\n$ on $\Gamma$ points from $\Omega_1$ into $\Omega_2$, as  illustrated in Fig. \ref{Fig:mesh_domain} (a). 
  We assume that $f\in L^2(\Omega),\,g_D\in H^{1/2}(\Gamma),\,g_N\in H^{-1/2}(\Gamma)$.
  For simplicity, assume that $\kappa_i=\kappa|_{\Omega_i},\,i=1,2$, is a positive constant in $\Omega_i$.

We use WG-MFEM \cite{Wang_DS_WG} on the polygonal mesh $\T_h$ to discretize Problem (\ref{primal_problem_interface}). 
The discretization is defined in $\Omega_{h,1}\cup\Omega_{h,2}$ rather than in $\Omega_1\cup\Omega_2$.
In the mixed formulation, the Dirichlet interface condition $[p]=g_D$ becomes natural, while the Neumann boundary condition $\bl u\cdot\bl \n|_{\partial\Omega} = 0$ and the interface condition $[\bl u\cdot\bl\n]=g_N$ becomes essential.
This poses a major difficulty in the finite element discretization, since $\Gamma$ is not equal to $\Gamma_h$. The interface condition must be transferred from $\Gamma$ to $\Gamma_h$.
In this work, we use the idea of the boundary value correction method \cite{1972} to transfer the interface condition, and design a boundary-corrected WG-MFEM for Problem \ref{primal_problem_interface}. To this end, we first introduce a Taylor expansion near the interface.

Assume that there exists a map $\rho_h:\Gamma_h\rightarrow\Gamma$ such that
    \begin{equation}
    \begin{aligned}\label{map}
    \rho_h(\bl x_h):=\bl x_h+\delta_h(\bl x_h)\bl \nu_h(\bl x_h),\qquad \forall\,\bl x_h\in\Gamma_h,
     \end{aligned}
    \end{equation}
    as shown in Fig. \ref{Fig:Mh}, where $\bl\nu_h$ denotes a unit vector pointing from $\bl x_h\in\Gamma_h$ to $\rho_h(\bl x_h)\in\Gamma$ and $\delta_h(\bl x_h)=|\rho_h(\bl x_h)-\bl x_h|$. For simplicity, 
     denote $\bl x:=\rho_h(\bl x_h)$ and $\wt{\bl\n}(\bl x_h):=\bl\n\circ \rho_h(\bl x_h)$, that is, the unit outward normal vector on $\bl x\in\Gamma$ is pulled to the corresponding $\bl x_h\in\Gamma_h$.
    Since $\Gamma$ is piecewise $C^2$ continuous, it is reasonable to further assume that $\rho$ satisfies \cite{Burman_boundary_value_correction_2018}:
    \begin{equation}
    \begin{aligned}\label{Inequality:delta}
    \delta=\sup_{\bl x_h\in\Gamma_h}\delta_h(\bl x_h)\lesssim h^2,\qquad \|\wt{\bl\n}-\bl\n_h\|_{L^\infty(\Gamma_h)}\lesssim h,
    \end{aligned}
    \end{equation}

    \begin{figure}[!htbp]
      \centering
      \begin{tikzpicture}
      \draw[red,very thick] (-1.71,0.8)--(1.69,0.8);
      \draw[blue, thick] (1.8,0.6) arc (25:155:2);
      \draw[gray, thick,->] (-0.3,0.8)--(-0.6,1.705);
      \draw[blue, thick,->](-0.6,1.705)--(-0.85,2.5);
      \draw (-0.1,0.7) node [below] {${\bm x_h}$};
      \draw[red] (1,0.4) node [right] {$\Gamma_h$};
      \draw[blue] (1.8,0.6) node[right] {$\Gamma$};
      \draw (0.15,1.2) node {$\delta_h(\bm x_h)$};
      \draw (-0.7,1.6) node[below] {$\bm x$};
      \draw (-0.2,2.2) node[left] {$\bl \nu_h$};
      \filldraw [black](-0.3,0.8) circle [radius=2pt];
      \filldraw [black](-0.6,1.67) circle [radius=2pt];
      \end{tikzpicture}
      \caption{The distance vector $\delta_h(\bl x_h)$ and the unit vector $\bl{\nu_h}$ to $\Gamma_h$. 
       }
      \label{Fig:Mh}
      \end{figure}
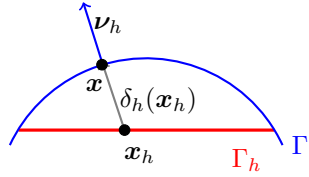

      \begin{re}
      In this paper, the map $\rho_h$ is not specified. We only require that the distance function $\delta_h(\bl x_h)$ satisfies (\ref{Inequality:delta}).
      For body-fitted meshes, this surely holds by simply setting $\bl\nu_h=\bl\n_h$. Indeed, (\ref{Inequality:delta}) can even be true on certain unfitted meshes. 
      The method and analysis in this paper extend to unfitted meshes satisfying (\ref{Inequality:delta}) with a little modification.
        \end{re}

    For a sufficiently smooth function $v$ defined between $\Gamma_h$ and $\Gamma$, using $m$-th order Taylor expansion along $\bl\nu_h$, we have
        \begin{equation*}
          \begin{aligned}
          v(\rho_h(\bl x_h))&=\sum_{j=0}^{m}\frac{\delta_h^j(\bl x_h)}{j!}\partial_{\bl\nu_h}^j v(\bl x_h)+R^m v(\bl x_h),\quad &\forall\,&\bl x_h\in\Gamma_h,\\
          \end{aligned}
          \end{equation*}
          
         where $\partial_{\bl\nu_h}^j$ is the $j$-th partial derivative in the $\bl\nu_h$ direction and the residual $R^m v(\bl x_h)$ satisfies
          $$|R^m v(\bl x_h)|=o(\delta^m).$$

          For simplicity of notation, denote 
          \begin{equation}
          \begin{aligned}\label{T1}
          T^m v(\bl x_h):=\sum_{j=0}^{m}\frac{\delta_h^j(\bl x_h)}{j!}\partial_{\bl\nu_h}^j v(\bl x_h),\qquad
          T_1^m v(\bl x_h):=\sum_{j=1}^{m}\frac{\delta_h^j(\bl x_h)}{j!}\partial_{\bl\nu_h}^j v(\bl x_h).
          \end{aligned}
          \end{equation}
Both $T^m v$ and $T_1^m v$ are functions on $\Gamma_h$.
Denote by $\wt{v}(\bl x_h):= v\circ\rho_h(\bl x_h)$ the pullback of $v$ from $\Gamma$ to $\Gamma_h$.  
As long as $v$ is smooth enough, $T^m v$ can be viewed as an approximation to $\wt{v}$, i.e.,
\begin{equation}
\begin{aligned}\label{T-R}
T^m v-\wt{v}=-R^m v,\qquad \text{on}\,\Gamma_h.
\end{aligned}
\end{equation}

Finally, we have the following two lemmas.
 \begin{lem}\label{lem:Omega_he2K}
        (cf. \cite{Yiliu_WG}). Given an integer $j\geq 0$. For $K\in \T_h^\Gamma$ and $q\in P_j(K)$, one has
        \begin{equation*}
        \begin{aligned}
        \sum_{e\subset\pt K\cap\Gamma_h}\|q\|_{0,\Omega_h^e}^2\lesssim h_K\|q\|_{0,K}^2.
        \end{aligned}
        \end{equation*}
    \end{lem}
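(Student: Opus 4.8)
Since the mesh is shape regular, $K$ has a uniformly bounded number of edges, so it suffices to prove $\|q\|_{0,\Omega_h^e}^2\lesssim h_K\|q\|_{0,K}^2$ for a single interface edge $e\subset\pt K\cap\Gamma_h$ and then sum over the boundedly many such edges. The plan is to exploit that the crescent $\Omega_h^e$ is thin. Combining the distance bound in (\ref{Inequality:delta}) with the piecewise-$C^2$ regularity of $\Gamma$, the deviation of the arc $\wt e$ from its chord $e$ over a length $\sim h_K$ is of order $h_K^2$, so the local width $\delta_e:=\sup_{\bl x_h\in e}\delta_h(\bl x_h)\lesssim h_K^2$. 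Taking $\bl\nu_h=\bl\n_h$ (which is perpendicular to the straight edge $e$), I would enclose $\Omega_h^e$ in the rectangular strip $B_e:=\{\bl x_h+s\bl\n_h:\bl x_h\in e,\ 0\le s\le\delta_e\}$, a region of dimensions $\sim h_K\times h_K^2$ on which the change of variables $(\bl x_h,s)\mapsto\bl x_h+s\bl\n_h$ is a rigid motion with unit Jacobian.

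First I would convert the volume integral over $B_e$ into a boundary integral on $e$. Writing $q(\bl x_h+s\bl\n_h)=q(\bl x_h)+\int_0^s\partial_{\bl\n_h}q\,\dif\tau$, squaring, and using Cauchy--Schwarz in $\tau$, then integrating in $s\in[0,\delta_e]$ and along $e$, gives
\begin{equation*}
\|q\|_{0,\Omega_h^e}^2\le\|q\|_{0,B_e}^2\lesssim\delta_e\|q\|_{0,e}^2+\delta_e^2\,|q|_{1,B_e}^2.
\end{equation*}
The first term is the dominant contribution: the trace inequality (Lemma~\ref{lem:TRACE}) and the inverse inequality (Lemma~\ref{lem:inverse}) give $\|q\|_{0,e}^2\le\|q\|_{0,\pt K}^2\lesssim h_K^{-1}\|q\|_{0,K}^2$, so $\delta_e\|q\|_{0,e}^2\lesssim h_K^2\cdot h_K^{-1}\|q\|_{0,K}^2=h_K\|q\|_{0,K}^2$, which is exactly the target order.

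It then remains to absorb the gradient term $\delta_e^2|q|_{1,B_e}^2$. The hard part will be that in case (a) of Fig.~\ref{Fig:Omega_e} the strip $B_e$ lies outside $K$, so $|q|_{1,B_e}$ cannot be bounded by $|q|_{1,K}$ by mere restriction. I would resolve this with a scaling argument: $B_e$ is contained in the $\delta_e$-neighbourhood $D_K$ of $K$, which for $h_K$ small is a shape-regular region of diameter $\sim h_K$; mapping $D_K$ and $K$ to unit size and using that all norms are equivalent on the finite-dimensional space $P_j(K)$ yields $|q|_{1,B_e}\le|q|_{1,D_K}\lesssim|q|_{1,K}$, with a constant depending only on $j$ and the shape-regularity parameters. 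The inverse inequality then gives $|q|_{1,K}\lesssim h_K^{-1}\|q\|_{0,K}$, so using $\delta_e^2\lesssim h_K^4$ this term is $\lesssim h_K^2\|q\|_{0,K}^2$, of higher order than the target. Combining the two estimates and summing over the interface edges of $K$ completes the proof.
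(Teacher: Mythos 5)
Your proof is correct. Note that the paper itself gives no argument for this lemma --- it is quoted with a citation to \cite{Yiliu_WG} --- so there is no in-paper proof to compare against; your write-up supplies the standard ``thin crescent'' argument that the citation is standing in for. The two points that actually require care are both handled properly: (i) the \emph{local} width bound $\delta_e\lesssim h_K^2$, which you correctly derive from the piecewise $C^2$ regularity of $\Gamma$ and the chord length $\sim h_K$ rather than from the global bound $\delta\lesssim h^2$ in (\ref{Inequality:delta}) (the global bound alone would only give the stated estimate under quasi-uniformity); and (ii) the fact that in case (a) of Fig.~\ref{Fig:Omega_e} the strip lies outside $K$, so that $q$ must be understood as its polynomial extension and $|q|_{1,B_e}$ controlled via norm equivalence on a shape-regular neighbourhood of $K$ --- your scaling argument (equivalently, comparing the inscribed ball of radius $\rho h_K$ guaranteed by shape regularity with a circumscribed ball of radius $\sim h_K$ containing $B_e$) is the right fix. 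A marginally shorter route to the same conclusion is to skip the one-dimensional Taylor step entirely and write $\|q\|_{0,\Omega_h^e}^2\le|\Omega_h^e|\,\|q\|_{L^\infty}^2\lesssim h_K\delta_e\cdot h_K^{-2}\|q\|_{0,K}^2\lesssim h_K\|q\|_{0,K}^2$, using the $L^\infty$--$L^2$ inverse estimate together with the same norm-equivalence step; this buys brevity, while your version isolates the boundary term $\delta_e\|q\|_{0,e}^2$ as the dominant contribution, which is closer in spirit to how such crescent estimates are used elsewhere in the paper (e.g.\ Lemma \ref{lem:B_K1994}).
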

        
    \begin{lem}\label{lem:B_K1994}
         (cf. \cite{Bramble_King1994}). For each $e\in\E_h^\Gamma$ and $v\in H^1(\Omega \cup \Omega_h)$ satisfying $v|_{\Gamma}=0$, one has
        \begin{equation}
        \begin{aligned}\label{BK_e2Omeg_e}
        \|v\|_{0,e}\lesssim \delta^{1/2}|v|_{1,\Omega_h^e}.
        \end{aligned}
        \end{equation}
   \end{lem}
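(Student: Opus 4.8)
The plan is to prove this by the classical argument of Bramble and King: foliate the crescent $\Omega_h^e$ by the line segments joining each point of $e$ to its image on $\Gamma$ under $\rho_h$, and exploit that $v$ vanishes at the far endpoint of every such segment. Since $\Omega_h^e$ lies entirely on one side of the interface (cases (a) and (b) of Fig.~\ref{Fig:Omega_e}), the restriction $v|_{\Omega_h^e}$ is a single-valued $H^1$ function, and only its trace on $\Gamma$ enters. I would therefore first reduce to a smooth $v$ by a standard density argument, so that the fundamental theorem of calculus may be applied along almost every foliating segment; the general $H^1$ case then follows by passing to the limit.

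For the geometric core, fix $\bl x_h\in e$ and parametrize the segment $\gamma_{\bl x_h}(t)=\bl x_h+t\,\bl\nu_h(\bl x_h)$ for $t\in[0,\delta_h(\bl x_h)]$, which by \eqref{map} terminates at $\rho_h(\bl x_h)\in\Gamma$. Because $v|_\Gamma=0$ and $\wt e\subset\Gamma$, we have $v(\rho_h(\bl x_h))=0$, so
\begin{equation*}
v(\bl x_h)=-\int_0^{\delta_h(\bl x_h)}\nabla v(\gamma_{\bl x_h}(t))\cdot\bl\nu_h(\bl x_h)\,\dif t .
\end{equation*}
Cauchy--Schwarz together with $\delta_h(\bl x_h)\leq\delta$ gives the pointwise bound $|v(\bl x_h)|^2\lesssim\delta\int_0^{\delta_h(\bl x_h)}|\nabla v(\gamma_{\bl x_h}(t))|^2\,\dif t$, and integrating over $e$ in arclength yields
\begin{equation*}
\|v\|_{0,e}^2\lesssim\delta\int_e\int_0^{\delta_h(\bl x_h)}|\nabla v(\gamma_{\bl x_h}(t))|^2\,\dif t\,\dif s .
\end{equation*}

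It remains to recognize the iterated integral on the right as an area integral over $\Omega_h^e$. I would introduce the map $(s,t)\mapsto\gamma_{\bl x_h(s)}(t)$ from the parameter region onto $\Omega_h^e$ and show it is a bijection whose Jacobian is bounded above and below by positive constants. For fine enough meshes this non-degeneracy is exactly what \eqref{Inequality:delta} provides: the bound $\|\wt{\bl\n}-\bl\n_h\|_{L^\infty(\Gamma_h)}\lesssim h$ keeps the foliating segments transversal to $e$ and to $\Gamma$ and prevents them from crossing, while $\delta\lesssim h^2$ keeps the region thin so the curvature correction to the Jacobian is negligible. Under this change of variables the iterated integral is $\lesssim\int_{\Omega_h^e}|\nabla v|^2=|v|_{1,\Omega_h^e}^2$, which completes the estimate $\|v\|_{0,e}\lesssim\delta^{1/2}|v|_{1,\Omega_h^e}$; the degenerate case $\Omega_h^e=\emptyset$ is trivial.

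The main obstacle I anticipate is precisely the verification that this normal foliation is a well-defined bi-Lipschitz change of coordinates with uniformly bounded Jacobian, i.e.\ that the segments $\gamma_{\bl x_h}$ sweep out $\Omega_h^e$ exactly once and transversally. This is where the piecewise $C^2$ regularity of $\Gamma$ and the smallness assumptions \eqref{Inequality:delta} must be invoked carefully; everything else reduces to the fundamental theorem of calculus and Cauchy--Schwarz. An alternative that sidesteps the Jacobian bookkeeping, valid when $\wt e$ is a graph over the straight chord $e$, is to foliate instead by the segments orthogonal to $e$ (the $\bl\n_h$ direction): then the change of variables is essentially the identity and the height of $\wt e$ over $e$ is controlled directly by $\delta$, covering cases (a) and (b) symmetrically.
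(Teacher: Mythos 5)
Your argument is correct and is essentially the classical normal-foliation proof of Bramble and King that the paper itself does not reproduce but simply cites (the lemma is stated with ``cf.~\cite{Bramble_King1994}'' and no proof is given). The fundamental-theorem-of-calculus step along the segments, the Cauchy--Schwarz bound with $\delta_h\le\delta$, and the change of variables onto $\Omega_h^e$ with Jacobian controlled via (\ref{Inequality:delta}) and the piecewise $C^2$ regularity of $\Gamma$ are exactly the ingredients of the cited result, and you correctly identify the only delicate point (non-degeneracy of the foliation) together with a workable fallback when $\wt e$ is a graph over $e$.
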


Lemmas \ref{lem:Omega_he2K}-\ref{lem:B_K1994} and (\ref{T1}),\,(\ref{T-R}) can be easily extended to vector-valued functions.

\section{Discrete problem}\label{sec4:discrete}

In this section, we follow the general approach of WG-MFEM \cite{Wang_DS_WG} to discretize Problem (\ref{primal_problem_interface}). 
For $K\in\T_h$, denote by $K_0$ and $\pt K$ the interior and boundary of $K$, respectively. 
Given an integer $j\geq 0$, denote by $P_j(K_0)$ and $P_j(K)$ the space of polynomials with degree less than or equal to $j$. 
Corresponding vector-valued function spaces are denoted by $\bl P_j(K_0)$ and $\bl P_j(K)$.
Likewise, on each $e\in\E_h$, let $P_j(e)$ be the set of polynomials on $e$ of degree no more than $j$. Given non-negative integers $\alpha,\beta,\sigma$, assume that $\beta-1\leq \sigma\leq \beta=\alpha$. For $i=1,2$, define the weak Galerkin spaces:
\begin{equation*}
\begin{aligned}
V_{h,i}&=\{\bl v_i=\{\bl v_0,\bl v_b\}\in \bl L^2(\Omega_{h,i})\times\bl L^2(\E_{h,i}): \bl v_0|_{K_0}\in \bl P_{\alpha}(K_0), \forall\, K\in\T_{h,i},\\
&\qquad \qquad\qquad \bl v_b|_e=v_b\hspace{-0.07cm}\bl\n_e,\forall\,v_b\in P_{\beta}(e), e\in\E_{h,i}\},\\
V_{h0}&=\{\bl v=(\bl v_1,\bl v_2)\in V_{h,1}\times V_{h,2}: \bl v_b|_e=0,\forall\,e\subset \pt \Omega_h\},\\
\Psi _{h,i}&=\{q_i\in L^2(\Omega_{h,i}): q|_K\in P_{\sigma}(K),\forall\,K\in\T_{h,i}\},\\
\Psi_{h}&= \Psi_{h,1}\times \Psi_{h,2},\qquad \Psi_{h0}=\Psi_h\cap L_0^2(\Omega_h),\\
\end{aligned}
\end{equation*}
where $\bl\n_e$ is a prescribed normal direction on each edge $e\in\E_h$. 

On $K\in\T_h$, we define the weak divergence $\nabla_w\cdot\bl v\in P_{\beta}(K)$ for each $\bl v=\{\bl v_0,\bl v_b\}\in V_{h,0}$ by
\begin{align}\label{weak_div}
(\nabla_w\cdot\bl v,q)_K=-(\bl v_0,\nabla q)_K+\lal \bl v_b\cdot\bl\n_K,q\ral_{\pt K},\qquad q\in P_{\beta}(K).
\end{align}
 
 Next, we recall the extension property of Sobolev spaces:

  From \cite{book:Extension}, give a Lipschitz domain $\Omega_i$ in $\bbR^2$ and $s\in\bbR,\, s\geq 0$, for any $w\in H^{s}(\Omega_1\cup \Omega_2)$, 
  there exists an extension operator $E_i: H^s(\Omega_i)\to H^s(\bbR^2)$ such that
      \begin{align}\label{extension}
        w_i^E|_{\Omega_i}= w_i,\quad \|w_i^E\|_{s,\bbR^2}\lesssim \|w_i\|_{s,\Omega_i},\qquad \forall w_i\in H^s(\Omega_i),\,i=1,2,
      \end{align}
  where $w_i^E=E_i w_i$. 
  Moreover
      $$\|w_i^E\|_{s,\Omega_{h,i}}\leq \|w_i^E\|_{s,\bbR^2}\lesssim \|w_i\|_{s,\Omega_i}.$$
  
  Define
      \begin{equation}
        w^E=
        \left\{
        \begin{aligned}\label{extension_w}
          w_1^E\quad \text{in}\, \Omega_{h,1},\\
          w_2^E\quad \text{in}\, \Omega_{h,2}.
        \end{aligned}
        \right.
        \end{equation}
It is clear that $w\neq w^E$ on $\cup_{e\in\E_h^\Gamma} \Omega_h^e$. In $\Omega_{h,1}\backslash\Omega_1$, $w=w_2$, while $w^E=w_1^E$; 
In $\Omega_{h,2}\backslash\Omega_2$, $w=w_1$, while $w^E=w_2^E$. 
We point out that the hidden constant in $\lesssim$ of (\ref{extension}) may depend on the shape of $\Omega_i,\,i=1,2$ but not on $h$. Throughout the paper, we shall only use (\ref{extension}) on $\Omega_i$ but not on $\Omega_{h,i}$.
The extension and its notation can also be extended to vector-valued functions.
     
We then introduce a discretization for (\ref{primal_problem_interface}). Define the discrete bilinear forms $a_h: V_{h0}\times V_{h0}\rightarrow \bbR,\,b_{h1}:V_{h0}\times \Psi_{h0}\rightarrow \bbR$ and $b_{h0}:V_{h0}\times \Psi_{h0}\rightarrow \bbR$ by
\begin{equation}
  \begin{aligned}\label{bilinear}
a_h(\bl w,\bl v)&=a_0(\bl w,\bl v)+s_h(\bl w,\bl v),\\
a_0(\bl w,\bl v)&=(\kappa^{-1}\bl w_0,\bl v_0)_{\T_h}+(\nabla_w\cdot \bl w,\nabla_w\cdot \bl v)_{\T_h}+\eta\sum_{e\in\E_h^\Gamma}\lal h_K^{-1}[T^\alpha\bl w_0]\cdot\wt{\bl \n},[T^\alpha\bl v_0]\cdot\wt{\bl \n}\ral_e,\\
s_h(\bl w,\bl v)&=\rho\sum_{K\in\T_h}h_K^{-1}\lal(\bl w_0-\bl w_b)\cdot\bl\n_h,(\bl v_0-\bl v_b)\cdot\bl\n_h\ral_{\pt K},\\
b_{h1}(\bl v,q)&=-(\nabla_w\cdot\bl v,q)_{\T_h}+\sum_{e\in\E_h^\Gamma}\lal[\bl v_b\cdot\bl\n_h],\{q\}\ral_e-\sum_{e\in\E_h^\Gamma}\lal\{\bl v_b\cdot\bl\n_h\},[T_1^\sigma q]\ral_e,\\
b_{h0}(\bl v,q)&=-(\nabla_w\cdot\bl v,q)_{\T_h},
  \end{aligned}
\end{equation}
in which $\eta$ and $\rho$ are positive constants. In fact, if $\bl v$ smooth enough, it has $\bl v=\bl v_0=\bl v_b$.

The boundary-corrected weak Galerkin formulation can be written as follows: Find $(\bl u_h,p_h)\in V_{h0}\times \Psi_{h0}$ such that
\begin{equation}
  \left\{
  \begin{aligned}\label{Ph}
    a_h(\bl u_h,\bl v)+b_{h1}(\bl v,p_h)&=\ell(\bl v),\qquad&\forall\,&\bl v\in V_{h0},\\
    b_{h0}(\bl u,q_h)&=-(f^E,q_h)_{\T_h},\qquad &\forall\,&q_h\in \Psi_{h0},
  \end{aligned}
  \right.
  \end{equation}
where
$$\ell(\bl v)=(f^E,\nabla_w\cdot \bl v)_{\T_h}+\eta\sum_{e\in\E_h^\Gamma}\lal h_K^{-1}\wt g_N,[T^\alpha\bl v_0]\cdot\wt{\bl \n}\ral_e-\sum_{e\in\E_h^\Gamma}\lal \wt g_D,\{\bl v\cdot\bl\n_h\}\ral_e,$$
where $f^E$ is the extended function, $\wt g_N(\bl x_h)=g_N\circ\rho (\bl x_h)$ and $\wt g_D(\bl x_h)=g_D\circ\rho(\bl x_h)$ are pull-back of the Neumann and Dirchlet interface data from $\Gamma$ to $\Gamma_h$.

\begin{re}
$s_h(\cdot,\cdot)$ is the stabilization term of the WG discretization, and the third part in $a_0(\cdot,\cdot)$ is the penalty term obtained from interface condition weakening. 
One can view $\eta$ as a penalization parameter and view $\rho$ as a stabilization parameter,
and the parameters $\eta$ and $\rho$ in the weak Galerkin discretization can be arbitrarily chosen without affecting the approximation results. 
For brevity, in the remainder of this paper we set $\eta=\rho=1$. 
\end{re}

In the rest of this section, we shall prove the well-posdness of (\ref{Ph}), and
discuss its ``compatibility'' condition.

\subsection{Well-posedness}\label{subsec:well_posedness}

Define the energy norm on $V_{h0}$ by
$$\|\bl v\|_{0,h}=(a_h(\bl v,\bl v))^{1/2},\qquad \forall \,\bl v\in V_{h0}.$$

For $K\in\T_h$, denote by $\bl Q_0,\,\bbQ_h$ and $\pi_h$ the $L^2$ projection onto $\bl P_{\alpha}(K),\, P_\sigma(K)$ and $P_\beta(K)$, respectively.
On each $e\in\E_h$, denote by $Q_b$ the $L^2$ projection onto $P_{\beta}(e)$.
For $\bl v\in \bl H^1(\T_h)$, define $\bl Q_b\bl v=(Q_b(\bl v\cdot\bl \n_e))\bl \n_e$ for each $e\in\E_h$.
Combining these local projections together, we can define a projection $\bl Q_h=\{\bl Q_0,\bl Q_b\}$ on $V_{h0}$.
From \cite{Wang_DS_WG}, the following commutative property holds:
\begin{align}
\nabla_w\cdot(\bl Q_h \bl v)=\pi_h(\nabla\cdot\bl v),\qquad\forall \bl v\in \bl H^1(\T_h).\label{comm_diag_property}
\end{align}

Before discussing the well-posedness of the discrete problem (\ref{Ph}), we introduce some lemmas.
\begin{lem}\label{lem:T1_bounded}
(cf. \cite{hou_BDM}). Under the assumption (\ref{Inequality:delta}), for $v\in P_l(K),\,l\geq 1$, we have
\begin{align}
  \|h_K^{-1/2} T_1^m v\|_{0,e}&\leq  h|v|_{1,K},\label{T0_bounded}\\
\|h_K^{-1/2} T_1^mv\|_{0,e}&\lesssim \|v\|_{0,K}. \label{T1_bound}
\end{align}
\end{lem}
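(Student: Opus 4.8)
The plan is to bound $T_1^m v$ term by term from its definition in (\ref{T1}), then pass from the straight edge $e$ into the element $K$ via the trace inequality (Lemma \ref{lem:TRACE}) and trade high-order derivatives for low-order ones via the inverse inequality (Lemma \ref{lem:inverse}). First I observe that since $v\in P_l(K)$, the directional derivatives $\partial_{\bl\nu_h}^j v$ vanish for $j>l$, so the sum defining $T_1^m v$ is in fact finite, running only up to $j\leq\min(m,l)$; this is exactly what makes the inverse inequality applicable to every surviving term. On the straight edge $e$ the vector $\bl\nu_h$ is (piecewise) constant, so each directional derivative is controlled by the corresponding full $j$-th derivative, and with $\delta_h\leq\delta$ we get the pointwise bound $|T_1^m v|\lesssim\sum_{j\geq 1}\tfrac{\delta^j}{j!}\,|\partial_{\bl\nu_h}^j v|$ on $e$.

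For a single term, $\partial_{\bl\nu_h}^j v$ is a polynomial, so the trace inequality followed by the inverse inequality (to absorb the first-derivative term in the trace bound) yields $\|\partial_{\bl\nu_h}^j v\|_{0,e}\lesssim h_K^{-1/2}\|\partial_{\bl\nu_h}^j v\|_{0,K}\lesssim h_K^{-1/2}|v|_{j,K}$. Applying the inverse inequality once more gives $|v|_{j,K}\lesssim h_K^{1-j}|v|_{1,K}$ for (\ref{T0_bounded}) and $|v|_{j,K}\lesssim h_K^{-j}\|v\|_{0,K}$ for (\ref{T1_bound}). Multiplying each term by $\delta^j/j!$ and by $h_K^{-1/2}$ and summing, I find that successive terms each carry an extra factor $\delta/h_K$, so that under the smallness in (\ref{Inequality:delta}) the series is dominated by its $j=1$ term. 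This produces the raw estimates $\|h_K^{-1/2}T_1^m v\|_{0,e}\lesssim \delta\,h_K^{-1}|v|_{1,K}$ and $\|h_K^{-1/2}T_1^m v\|_{0,e}\lesssim \delta\,h_K^{-2}\|v\|_{0,K}$; invoking the element-scale deviation bound $\delta\lesssim h_K^2$ (the local $C^2$ consequence of (\ref{Inequality:delta}), together with $h_K\leq h$) then closes (\ref{T0_bounded}) and (\ref{T1_bound}), respectively.

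The main obstacle is the bookkeeping of derivative order against powers of $\delta$ and $h_K$: every additional term in the Taylor sum supplies one more factor $\delta$ but, through the inverse inequality, also one more factor $h_K^{-1}$, so the decisive quantity is the ratio $\delta/h_K\lesssim h_K\ll 1$, which is what guarantees that the whole sum behaves like its leading term and does not accumulate over the $m$ terms. A secondary point requiring care is the passage from the global smallness $\delta\lesssim h^2$ stated in (\ref{Inequality:delta}) to the element-wise scaling $\delta\lesssim h_K^2$ that converts $\delta h_K^{-1}|v|_{1,K}$ into $h|v|_{1,K}$ and $\delta h_K^{-2}\|v\|_{0,K}$ into $\|v\|_{0,K}$; this step rests on mesh regularity and on the elementary geometric fact that, for a piecewise $C^2$ interface, the deviation of $\Gamma$ from a chord of length $h_K$ is of order $h_K^2$.
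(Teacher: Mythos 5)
The paper does not prove this lemma itself but defers to \cite{hou_BDM}; your argument is the standard one and is correct: bound each term of $T_1^m v$ pointwise by $\delta^j$ times the full $j$-th derivative (using $|\bl\nu_h|=1$), pass from $e$ to $K$ by the trace plus inverse inequalities, and observe that the $j$-th term then carries a factor $(\delta/h_K)^j$, so the sum is dominated by $j=1$. You also correctly isolate the one genuinely delicate point, namely that closing (\ref{T0_bounded}) and (\ref{T1_bound}) requires the elementwise bound $\delta\lesssim h_K^2$ rather than only the global $\delta\lesssim h^2$ of (\ref{Inequality:delta}), which indeed rests on the piecewise $C^2$ interface, the body-fitted chords, and mesh regularity (and note that the ``$\leq$'' in (\ref{T0_bounded}) can only be obtained as a ``$\lesssim$'' by this, or any comparable, argument).
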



\begin{lem}\label{lem:vb_bound}
  Under the assumption (\ref{Inequality:delta}), for $\bl v=\{\bl v_0,\bl v_b\}\in V_{h0}$, we have
  \begin{align}
  \sum_{e\in\E_h^\Gamma}\|h_K^{1/2} \{\bl v_b\cdot\bl\n_h\}\|_{0,e}^2&\lesssim \|\bl v\|_{0,h}^2,\label{vb_bound}\\
  \sum_{e\in\E_h^\Gamma}\|h_K^{-1/2} [\bl v_b\cdot\bl\n_h]\|_{0,e}^2&\lesssim \|\bl v\|_{0,h}^2.\label{vb_jump_bound}
  \end{align}
\end{lem}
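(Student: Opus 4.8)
The plan is to control both quantities by the three ingredients making up the energy norm $\|\bl v\|_{0,h}^2=a_h(\bl v,\bl v)$: the velocity mass term $(\kappa^{-1}\bl v_0,\bl v_0)_{\T_h}$, the stabilization $s_h(\bl v,\bl v)=\sum_{K}h_K^{-1}\|(\bl v_0-\bl v_b)\cdot\bl\n_h\|_{0,\pt K}^2$, and the interface penalty $\sum_{e\in\E_h^\Gamma}h_K^{-1}\|[T^\alpha\bl v_0]\cdot\wt{\bl\n}\|_{0,e}^2$. The recurring tool is that, since $\bl v_0|_K$ is a polynomial, combining the trace inequality (Lemma \ref{lem:TRACE}) with the inverse inequality (Lemma \ref{lem:inverse}) yields $\|\bl v_0\|_{0,e}\lesssim h_K^{-1/2}\|\bl v_0\|_{0,K}$ on any $e\subset\pt K$. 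On an interface edge $e$ I write $K_1\in\T_{h,1}$ and $K_2\in\T_{h,2}$ for the two adjacent elements (comparable in size by shape regularity), with $\bl v_{0,i},\bl v_{b,i}$ the corresponding traces.

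For the average (\ref{vb_bound}) I first split $\{\bl v_b\cdot\bl\n_h\}=\{\bl v_0\cdot\bl\n_h\}-\{(\bl v_0-\bl v_b)\cdot\bl\n_h\}$. Since $|\bl\n_h|=1$, the first piece obeys $h_K^{1/2}\|\{\bl v_0\cdot\bl\n_h\}\|_{0,e}\lesssim\|\bl v_{0,1}\|_{0,K_1}+\|\bl v_{0,2}\|_{0,K_2}$ by the trace/inverse estimate; squaring and summing over $\E_h^\Gamma$ gives $\lesssim\|\bl v_0\|_{0,\T_h}^2\lesssim\|\bl v\|_{0,h}^2$. For the second piece I write $h_K^{1/2}=h_K\cdot h_K^{-1/2}$, so after squaring it carries a factor $h_K^2\lesssim 1$ in front of the stabilization density, giving a contribution $\lesssim h^2\,s_h(\bl v,\bl v)\lesssim\|\bl v\|_{0,h}^2$. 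Hence the average part of the stabilization is in fact absorbed with room to spare.

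The jump (\ref{vb_jump_bound}) is the substantive case, since now the $h_K^{-1/2}$ scaling matches the penalty and stabilization scalings exactly and no powers are to spare. I again peel off the bubble--boundary difference, $[\bl v_b\cdot\bl\n_h]=[\bl v_0\cdot\bl\n_h]-(\bl v_{0,1}-\bl v_{b,1})\cdot\bl\n_h+(\bl v_{0,2}-\bl v_{b,2})\cdot\bl\n_h$, the last two terms being controlled directly by $s_h$ after multiplication by $h_K^{-1/2}$. Because $\bl\n_h$ is single-valued on $e$, the remaining term equals $[\bl v_0]\cdot\bl\n_h$, which I bridge to the penalty through the identity
\begin{equation*}
[\bl v_0]\cdot\bl\n_h=[T^\alpha\bl v_0]\cdot\wt{\bl\n}-[T_1^\alpha\bl v_0]\cdot\wt{\bl\n}+[\bl v_0]\cdot(\bl\n_h-\wt{\bl\n}),
\end{equation*}
using $T^\alpha\bl v_0=\bl v_0+T_1^\alpha\bl v_0$ on each side. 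The first term, scaled by $h_K^{-1/2}$, is exactly the penalty density. The second is handled by Lemma \ref{lem:T1_bounded}, which gives $h_K^{-1/2}\|T_1^\alpha\bl v_{0,i}\|_{0,e}\lesssim\|\bl v_{0,i}\|_{0,K_i}$ (with $\wt{\bl\n}$ bounded), hence a contribution $\lesssim\|\bl v_0\|_{0,\T_h}^2$. For the third term I use the normal-deviation estimate of (\ref{Inequality:delta}) in its local form $\|\wt{\bl\n}-\bl\n_h\|_{L^\infty(e)}\lesssim h_K$ together with the trace/inverse bound on $[\bl v_0]$, converting the extra $h_K^{-1/2}$ into a benign $\|\bl v_{0,i}\|_{0,K_i}$. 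Summing the three contributions over $\E_h^\Gamma$ produces $\|\bl v\|_{0,h}^2$.

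I expect the main obstacle to be precisely this last, geometric term: the penalty controls only $[T^\alpha\bl v_0]\cdot\wt{\bl\n}$, built from the shifted normal $\wt{\bl\n}$ and the Taylor polynomial $T^\alpha$, whereas the target is $[\bl v_b\cdot\bl\n_h]$ with the discrete normal $\bl\n_h$. Closing the gap requires the $O(h_K)$ discrepancy $\bl\n_h-\wt{\bl\n}$ and the higher-order Taylor tail $T_1^\alpha\bl v_0$ each to absorb one power of $h_K^{-1/2}$ from the edge-to-element inverse estimate without leaving a negative power of $h_K$; this is where the local scaling in (\ref{Inequality:delta}) and the polynomial inverse inequality must be used in tandem, and is the only place the geometry assumptions genuinely enter.
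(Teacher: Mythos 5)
Your proposal is correct and follows essentially the same route as the paper: the same split $\bl v_b=\bl v_0-(\bl v_0-\bl v_b)$ absorbed by the stabilization and the trace/inverse estimate for the average, and for the jump the same decomposition $[\bl v_0]\cdot\bl\n_h=[T^\alpha\bl v_0]\cdot\wt{\bl\n}-[T_1^\alpha\bl v_0]\cdot\wt{\bl\n}+[\bl v_0]\cdot(\bl\n_h-\wt{\bl\n})$, with the three pieces controlled by the interface penalty, Lemma \ref{lem:T1_bounded}, and assumption (\ref{Inequality:delta}) combined with the trace/inverse bound, respectively. The only cosmetic difference is that the paper invokes the global bound $\|\wt{\bl\n}-\bl\n_h\|_{L^\infty(\Gamma_h)}\lesssim h$ rather than a local $O(h_K)$ version, which changes nothing in the conclusion.
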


\begin{proof}
  From the trace and inverse inequalities in Lemmas \ref{lem:TRACE},\,\ref{lem:inverse}, by the definition of $s_h(\cdot,\cdot)$, one gets
  \begin{align*}
    \sum_{e\in\E_h^\Gamma}\|h_K^{1/2} \{\bl v_b\cdot\bl\n_h\}\|_{0,e}^2&\lesssim \sum_{e\in\E_h^\Gamma}h_K(\|\{(\bl v_0-\bl v_b)\cdot\bl\n_h\}\|_{0,e}^2+\|\{\bl v_0\cdot\bl\n_h\}\|_{0,e}^2)\\
    &\lesssim \sum_{K\in\T_h}h_K\|(\bl v_0-\bl v_b)\cdot\bl\n_h\|_{0,\pt K}^2+\|\bl v_0\|_{0,\T_h}^2\\
    & \lesssim \|\bl v\|_{0,h}^2.
   \end{align*}
   Thus one obtains (\ref{vb_bound}). Similarly, we have
   \begin{equation}
    \begin{aligned}\label{vb_temp1}
    \sum_{e\in\E_h^\Gamma}&\|h_K^{-1/2} [\bl v_b\cdot\bl\n_h]\|_{0,e}^2
    \lesssim \sum_{e\in\E_h^\Gamma}h_K^{-1}\bigg(\|[(\bl v_0-\bl v_b)\cdot\bl\n_h]\|_{0,e}^2+\|[\bl v_0\cdot\bl\n_h]\|_{0,e}^2\bigg)\\
    &\lesssim \sum_{K\in\T_h}h_K^{-1}\|(\bl v_0-\bl v_b)\cdot\bl\n_h\|_{0,\pt K}^2+\sum_{e\in\E_h^\Gamma}h_K^{-1}\|[\bl v_0\cdot\bl\n_h]\|_{0,e}^2.  
  \end{aligned}
  \end{equation}
   Using (\ref{Inequality:delta}),\,(\ref{T1_bound}), Lemmas \ref{lem:T1_bounded},\,\ref{lem:TRACE},\,\ref{lem:inverse}, we deduce that
   \begin{equation}
    \begin{aligned}\label{vb_temp2}
      \sum_{e\in\E_h^\Gamma}h_K^{-1}\|[\bl v_0\cdot\bl\n_h]\|_{0,e}^2&\lesssim \sum_{e\in\E_h^\Gamma}h_K^{-1}\|[\bl v_0\cdot\wt{\bl\n}]\|_{0,e}^2
      +\sum_{e\in\E_h^\Gamma}h_K^{-1}\|[\bl v_0\cdot(\wt{\bl\n}-\bl\n_h)]\|_{0,e}^2\\
    &\lesssim \sum_{e\in\E_h^\Gamma}h_K^{-1}(\|[T^\alpha\bl v_0\cdot\wt{\bl\n}]\|_{0,e}^2+\|[T_1^\alpha\bl v_0\cdot\wt{\bl\n}]\|_{0,e}^2)+\sum_{e\in\E_h^\Gamma}h\|[\bl v_0]\|_{0,e}^2\\
    &\lesssim \sum_{e\in\E_h^\Gamma}h_K^{-1}\|[T^\alpha\bl v_0\cdot\wt{\bl\n}]\|_{0,e}^2+\|\bl v_0\|_{0,\T_h}^2.
  \end{aligned}
\end{equation}
Next,  
  substituting (\ref{vb_temp2}) into (\ref{vb_temp1}), it holds
  \begin{align*}
    \sum_{e\in\E_h^\Gamma}&\|h_K^{-1/2} [\bl v_b\cdot\bl\n_h]\|_{0,e}^2\lesssim \|\bl v\|_{0,h}^2.
  \end{align*}
$\hfill\Box$\end{proof}

\begin{lem}\label{lem:bound_b}
  For $\bl u,\,\bl v\in V_{h0}$ and $q\in \Psi_{h0}$, we have
  \begin{align*}
  a_h(\bl u,\bl v)&\lesssim \|\bl u\|_{0,h}\|\bl v\|_{0,h},&\quad &a_h(\bl u,\bl v) = \|\bl v\|_{0,h}^2.\\
    b_{h1}(\bl v,q)&\lesssim \|\bl v\|_{0,h}\|q\|_{0,\Omega_h},&\quad &b_{h0}(\bl v,q)\lesssim \|\bl v\|_{0,h}\|q\|_{0,\Omega_h}.
  \end{align*}
\end{lem}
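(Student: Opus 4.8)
The plan is to treat the four assertions separately, since each reduces to the Cauchy--Schwarz inequality once the bilinear form in question is split into its constituent inner products and matched against the energy norm $\|\cdot\|_{0,h}$.

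The identity $a_h(\bl v,\bl v)=\|\bl v\|_{0,h}^2$ is merely the definition of the energy norm, so nothing is to be proved there. For the continuity bound $a_h(\bl u,\bl v)\lesssim\|\bl u\|_{0,h}\|\bl v\|_{0,h}$ I would exploit that $a_h$ is a symmetric, positive semidefinite bilinear form: every piece, namely $(\kappa^{-1}\bl w_0,\bl v_0)_{\T_h}$, $(\nabla_w\cdot\bl w,\nabla_w\cdot\bl v)_{\T_h}$, the interface penalty term, and the stabilizer $s_h(\bl w,\bl v)$, is an $L^2$-type inner product of one common linear map applied to $\bl u$ and to $\bl v$. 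Since $a_h(\bl v,\bl v)=\|\bl v\|_{0,h}^2\ge 0$, the generalized Cauchy--Schwarz inequality $a_h(\bl u,\bl v)\le a_h(\bl u,\bl u)^{1/2}a_h(\bl v,\bl v)^{1/2}$ applies verbatim and gives the estimate with constant one. For $b_{h0}(\bl v,q)=-(\nabla_w\cdot\bl v,q)_{\T_h}$ I would apply Cauchy--Schwarz directly and observe that $\|\nabla_w\cdot\bl v\|_{0,\T_h}^2$ is one of the nonnegative summands of $a_h(\bl v,\bl v)=\|\bl v\|_{0,h}^2$; hence $\|\nabla_w\cdot\bl v\|_{0,\T_h}\le\|\bl v\|_{0,h}$ and the claim follows immediately.

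The only genuinely substantial estimate is the continuity of $b_{h1}$, which I would attack term by term. The divergence contribution coincides with $b_{h0}$ and is already controlled. For the edge term $\sum_{e\in\E_h^\Gamma}\lal[\bl v_b\cdot\bl\n_h],\{q\}\ral_e$ I would insert the scalings $h_K^{-1/2}$ and $h_K^{1/2}$, apply Cauchy--Schwarz on each edge and then across edges, invoke (\ref{vb_jump_bound}) of Lemma \ref{lem:vb_bound} for the $\bl v$ factor, and bound the $q$ factor $\sum_e h_K\|\{q\}\|_{0,e}^2$ by $\|q\|_{0,\Omega_h}^2$ using the trace and inverse inequalities (Lemmas \ref{lem:TRACE}, \ref{lem:inverse}) for the piecewise polynomial $q$. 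For the last term $\sum_{e\in\E_h^\Gamma}\lal\{\bl v_b\cdot\bl\n_h\},[T_1^\sigma q]\ral_e$ I would insert the reciprocal scalings $h_K^{1/2}$ and $h_K^{-1/2}$, use (\ref{vb_bound}) for the $\bl v$ factor, and crucially invoke (\ref{T1_bound}) of Lemma \ref{lem:T1_bounded} to control $\|h_K^{-1/2}T_1^\sigma q\|_{0,e}\lesssim\|q\|_{0,K}$ on each adjacent element. Summing the squared element contributions returns $\|q\|_{0,\Omega_h}^2$, and collecting the three pieces yields the desired bound.

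I expect the main obstacle to be purely bookkeeping: distributing the powers of $h_K$ between the $\bl v$ and $q$ factors so that the edge sums align exactly with the energy-norm bounds of Lemma \ref{lem:vb_bound} and the $T_1^\sigma$ estimate of Lemma \ref{lem:T1_bounded}, together with the edge-to-element passage in which each interface element carries only a bounded number of edges and the jump and average operators split into the two adjacent elements. No ideas beyond Cauchy--Schwarz and the already-established trace, inverse, and $T_1^\sigma$ estimates should be needed.
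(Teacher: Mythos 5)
Your proposal is correct and follows essentially the same route as the paper: the identity and the bounds for $a_h$ and $b_{h0}$ are dispatched by the (generalized) Schwarz inequality exactly as in the paper's proof, and your term-by-term treatment of $b_{h1}$ --- inserting the $h_K^{\pm 1/2}$ weights, invoking Lemma \ref{lem:vb_bound} for the $\bl v_b$ factors, and using the trace/inverse inequalities together with (\ref{T1_bound}) for the $q$ factors --- is precisely the paper's argument. No substantive differences to report.
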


\begin{proof}
  From the definition of the energy norm, it is obvious that $a_h(\bl u,\bl v) = \|\bl v\|_{0,h}^2$. The upper bounds of $a_{h}$ and $b_{h0}$ follow immediately from the Schwarz inequality. 
  
  Using the Schwarz inequality and the Lemmas \ref{lem:TRACE},\,\ref{lem:inverse},\,\ref{lem:vb_bound} and Inequality (\ref{T1_bound}), we obtain
  \begin{align*}
   b_{h1}(\bl v,q)&\leq \|\nabla_w\cdot\bl v\|_{0,\T_h}\|q\|_{0,\T_h}+\sum_{e\in\E_h^\Gamma}\|[\bl v_b\cdot\bl\n_h]\|_{0,e}\|\{q\}\|_{0,e}+\sum_{e\in\E_h^\Gamma}\|\{\bl v_b\cdot\bl\n_h\}\|_{0,e}\|[T_1^\sigma q]\|_{0,e}\\
   &\leq\|\nabla_w\cdot\bl v\|_{0,\T_h}\|q\|_{0,\Omega_h}+\bigg(\sum_{e\in\E_h^\Gamma}h_K^{-1}\|[\bl v_b\cdot\bl\n_h]\|_{0,e}^2\bigg)^{1/2}\bigg(\sum_{e\in\E_h^\Gamma}h_K\|\{q\}\|_{0,e}^2\bigg)^{1/2}\\
   &\quad+\bigg(\sum_{e\in\E_h^\Gamma}h_K\|\{\bl v_b\cdot\bl\n_h\}\|_{0,e}^2\bigg)^{1/2}\bigg(\sum_{e\in\E_h^\Gamma}h_K^{-1}\|[T_1^\sigma q]\|_{0,e}^2\bigg)^{1/2}\\
   &\lesssim \|\bl v\|_{0,h}\|q\|_{0,\Omega_h}.
  \end{align*}
$\hfill\Box$\end{proof}

Lemma \ref{lem:bound_b} gives the boundness of $a_h(\cdot,\cdot),\,b_{h1}(\cdot,\cdot)$ and $b_{h0}(\cdot,\cdot)$. Then, the inf-sup conditions of $b_{h0}$ and $b_{h1}$ are derived in the following lemmas.
The standard way of the WG-MFEM can be used to prove the discrete inf-sup condition of $b_{h0}$. 
It seems infeasible to prove the inf-sup conditions of $b_{h1}$ for any $q\in\Psi_{h0}$ in the same way. In the following, we shall explain why. 
Firstly, this is attributed to the existence of interface terms on $b_{h1}$.
Secondly, a function $q\in \Psi_{h0}$ is mean value free on $\Omega_h$ but not necessarily on $\Omega_i,\,i=1,2$, which poses additional difficulty in the analysis.
In order to solve this issue, the main idea is to utilize the Macro-element technique (see the Theorem 1.12 of Chapter 2 in \cite{book:1986_NS}). 

\begin{lem}\label{lem:inf_sup_b0}
 (Inf-sup condition). For $q\in \Psi_{h0}$, one has
    \begin{align*}
    \sup_{\bl v\in V_{h0}}\frac{b_{h0}(\bl v,q)}{\|\bl v\|_{0,h}}\gtrsim \|q\|_{0,\Omega_h}.
    \end{align*}
\end{lem}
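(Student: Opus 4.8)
The plan is to produce, for each given $q\in\Psi_{h0}$, an explicit admissible velocity $\bl v\in V_{h0}$ realizing the supremum up to a constant, in the classical Fortin style. Since the outer boundary of $\Omega_h$ is the exact polygon $\pt\Omega$, we have $\Omega_h=\Omega$ as a point set, so $q\in\Psi_{h0}\subset L_0^2(\Omega)$. Invoking the surjectivity of the divergence on the Lipschitz domain $\Omega$, I would pick $\bl\phi\in\bl H^1_0(\Omega)$ with $\nabla\cdot\bl\phi=-q$ and $\|\bl\phi\|_{1,\Omega}\lesssim\|q\|_{0,\Omega_h}$. Because $\bl\phi\in\bl H^1_0(\Omega)$, its normal trace vanishes on $\pt\Omega_h$, so $\bl v:=\bl Q_h\bl\phi\in V_{h0}$ is admissible.

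For this choice I would use the commutative property (\ref{comm_diag_property}) to get $\nabla_w\cdot\bl v=\pi_h(\nabla\cdot\bl\phi)=\pi_h(-q)$. Since $\sigma\le\beta$, each $q|_K\in P_\sigma(K)\subset P_\beta(K)$, hence $\pi_h(-q)=-q$ exactly, and therefore $b_{h0}(\bl v,q)=-(\nabla_w\cdot\bl v,q)_{\T_h}=\|q\|_{0,\Omega_h}^2$. It then remains only to prove the stability estimate $\|\bl v\|_{0,h}\lesssim\|\bl\phi\|_{1,\Omega}$, because combining the two yields $b_{h0}(\bl v,q)/\|\bl v\|_{0,h}\gtrsim\|q\|_{0,\Omega_h}^2/\|q\|_{0,\Omega_h}=\|q\|_{0,\Omega_h}$, and taking the supremum over $V_{h0}$ gives the claim.

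The stability bound reduces to controlling each of the four contributions to $a_h(\bl v,\bl v)$. The velocity term $\|\kappa^{-1/2}\bl Q_0\bl\phi\|_{0,\T_h}$ and the weak-divergence term $\|\nabla_w\cdot\bl v\|_{0,\T_h}=\|\pi_h(\nabla\cdot\bl\phi)\|_{0,\T_h}$ are immediately bounded by $\|\bl\phi\|_{1,\Omega}$ via $L^2$-stability of the projections. The stabilization term $s_h(\bl v,\bl v)$ is handled by the standard WG argument: writing $\bl Q_0\bl\phi-\bl Q_b\bl\phi=(\bl Q_0\bl\phi-\bl\phi)+(\bl\phi-\bl Q_b\bl\phi)$ and combining the trace inequality (Lemma \ref{lem:TRACE}) with the Bramble–Hilbert lemma (Lemma \ref{lem:BH}) gives $h_K^{-1}\|(\bl Q_0\bl\phi-\bl Q_b\bl\phi)\cdot\bl\n_h\|_{0,\pt K}^2\lesssim|\bl\phi|_{1,K}^2$.

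The main obstacle, and the genuinely new ingredient compared with the flat-interface case, is the interface penalty term $\sum_{e\in\E_h^\Gamma}h_K^{-1}\|[T^\alpha\bl Q_0\bl\phi]\cdot\wt{\bl\n}\|_{0,e}^2$. Here I would split $T^\alpha\bl Q_0\bl\phi=\bl Q_0\bl\phi+T_1^\alpha\bl Q_0\bl\phi$. The $T_1^\alpha$ part is controlled by Lemma \ref{lem:T1_bounded} (estimate (\ref{T0_bounded})) together with the $H^1$-stability of $\bl Q_0$, contributing a factor $h^2|\bl\phi|_{1,K}^2$. For the remaining jump $[\bl Q_0\bl\phi]\cdot\wt{\bl\n}$, the key observation is that $\bl\phi\in\bl H^1(\Omega)$ has no jump across the internal polygonal curve $\Gamma_h$, i.e. $[\bl\phi]=\bl 0$ on each $e\in\E_h^\Gamma$; hence $[\bl Q_0\bl\phi]=[\bl Q_0\bl\phi-\bl\phi]$, and the trace inequality plus Bramble–Hilbert on the two elements adjacent to $e$ yield $h_K^{-1}\|[\bl Q_0\bl\phi]\|_{0,e}^2\lesssim|\bl\phi|_{1,K}^2$. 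Summing all four contributions over $\T_h$ and $\E_h^\Gamma$ gives $\|\bl v\|_{0,h}^2\lesssim\|\bl\phi\|_{1,\Omega}^2\lesssim\|q\|_{0,\Omega_h}^2$. I expect the recognition that $[\bl\phi]=\bl 0$ on $\Gamma_h$ and the careful bookkeeping of the Taylor correction $T^\alpha$ against $\wt{\bl\n}$ to be the most delicate points of the argument.
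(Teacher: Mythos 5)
Your proposal is correct and follows essentially the same Fortin-style route as the paper: choose $\bl w\in\bl H_0^1(\Omega_h)$ with $\nabla\cdot\bl w=-q$ and $\|\bl w\|_{1,\Omega_h}\lesssim\|q\|_{0,\Omega_h}$, set $\bl v=\bl Q_h\bl w$, use the commutative property (\ref{comm_diag_property}) to get $b_{h0}(\bl v,q)=\|q\|_{0,\Omega_h}^2$, and then bound the four contributions to $\|\bl v\|_{0,h}$ exactly as you list them. The one place where you diverge is the interface penalty term. The paper asserts that the continuity of $\bl w$ implies $[\bl Q_0\bl w]|_{\Gamma_h}=0$, and hence $[T^\alpha\bl Q_0\bl w]=[T_1^\alpha\bl Q_0\bl w]$, which it then bounds by Lemma \ref{lem:T1_bounded}; taken literally this intermediate claim is not valid, since the element-wise $L^2$ projection does not preserve continuity across $\Gamma_h$. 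Your treatment — writing $T^\alpha\bl Q_0\bl\phi=\bl Q_0\bl\phi+T_1^\alpha\bl Q_0\bl\phi$, using $[\bl\phi]=\bl 0$ to replace $[\bl Q_0\bl\phi]$ by $[\bl Q_0\bl\phi-\bl\phi]$, and then applying the trace inequality and Bramble--Hilbert on the two neighboring elements — is the rigorous version of this step, and it is in fact the computation the paper itself carries out in the proof of the subsequent Lemma \ref{lem:inf_sup_0_b1}. So your argument is, if anything, slightly more careful than the paper's at that point; everything else (including the observation $\pi_h(-q)=-q$ from $\sigma\le\beta$, which the paper realizes instead via the self-adjointness of $\pi_h$) is equivalent.
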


\begin{proof}
  For $q\in \Psi_{h0}\subset L_0^2(\Omega_h)$, it is well-known (see, e.g. \cite{FEM_Brenner}) that there exists $\bl w\in \bl H_0^1(\Omega_h)$ such that
  $\nabla\cdot\bl w=-q$ and $\|\bl w\|_{1,\Omega_h}\lesssim\|q\|_{0,\Omega_h}$, where the hidden constant in $\lesssim$ may depend on the shape of $\Omega_h$ but not on $h$. Meanwhile, note that the shape of $\Omega_h$ is the same as that of $\Omega$, and both remains unchanged. Then, let $\bl v=\bl Q_h \bl w$, by the commutative property (\ref{comm_diag_property}), we have
  \begin{equation}
  \begin{aligned}\label{b_h0_bound}
    b_{h0}(\bl Q_h{\bl w},q)&=-(\nabla_w\cdot\bl Q_h {\bl w},q)_{\T_h}=-(\pi_h(\nabla\cdot\bl w),q)_{\T_h}\\
    &=-(\nabla\cdot{\bl w},q)_{\T_h}=\|q\|_{0,\Omega_h}^2.
  \end{aligned}
  \end{equation}
  Next, we prove $\|\bl Q_h\bl w\|_{0,h}\lesssim\|q\|_{0,\Omega_h}$. From the properties of $\bl Q_h$, it is easy that
  \begin{align*}
  \|\bl Q_0\bl w\|_{0,\T_h}&\leq \|\bl w\|_{0,\Omega_h},\\
 \|\nabla_w\cdot \bl Q_h\bl w\|_{0,\T_h}&=\|\pi_h(\nabla\cdot\bl w)\|_{0,\T_h}\leq \|\nabla\cdot\bl w\|_{0,\Omega_h}\lesssim \|\nabla\bl w\|_{0,\Omega_h}.
 \end{align*}
  Using the fact that $(\bl Q_0\bl w\cdot\bl\n_h)|_e=(\bl Q_b(\bl Q_0\bl w))\cdot\bl\n_h$, the trace inequality in Lemma \ref{lem:TRACE} and the properties of $L^2$ projection, one gets
  \begin{equation*}
    \begin{aligned}
     \sum_{K\in\T_h}h^{-1}\|(\bl Q_0\bl w-\bl Q_b\bl w)\cdot\bl\n_h\|_{0,\pt K}^2&=\sum_{K\in\T_h}h^{-1}\|(\bl Q_b(\bl Q_0\bl w-\bl w))\cdot\bl\n_h\|_{0,\pt K}^2\\
     &\lesssim \sum_{K\in\T_h}h^{-1}\|\bl Q_0\bl w-\bl w\|_{0,\pt K}^2\\
     &\lesssim \|\nabla\bl w\|_{0,\Omega_h}^2.
    \end{aligned}
  \end{equation*}
Since $\bl w\in \bl H_0^1(\Omega_h)$, using the continuity of $\bl w$ in $\Omega_h$, it shows that $[\bl Q_0\bl w]|_{\Gamma_h}=0$. By the definition of
$T^\alpha$ and $T_1^\alpha$ in (\ref{T1}), one has $[T^\alpha(\bl Q_0\bl w)]|_{\Gamma_h}=[T_1^\alpha(\bl Q_0\bl w)]|_{\Gamma_h}$. 
By Lemma \ref{lem:T1_bounded} and the property of $L^2$ projection, one has
\begin{equation*}
  \begin{aligned}
   \sum_{e\in\E_h^\Gamma}h^{-1}\|[T^\alpha(\bl Q_0\bl w)]\cdot\wt{\bl\n}\|_{0,e}^2\leq &\sum_{e\in\E_h^\Gamma}h^{-1}\|[T_1^\alpha(\bl Q_0\bl w)]\|_{0,e}^2
   \lesssim\|\bl Q_0\bl w\|_{0,\T_h}^2\lesssim\|\bl w\|_{0,\Omega_h}^2.
  \end{aligned}
  \end{equation*}
  Combining the inequalities above, we obtain $\|\bl Q_h\bl w\|_{0,h}\lesssim\|\bl w\|_{1,\Omega_h}\lesssim\|q\|_{0,\Omega_h}$. Then 
  \begin{align*}
    \sup_{\bl v\in V_{h0}}\frac{b_{h0}(\bl v,q)}{\|\bl v\|_{0,h}}
    \geq \frac{b_{h0}(\bl Q_h \bl w,q)}{\|\bl Q_h \bl w\|_{0,h}}
    \gtrsim \|q\|_{0,\Omega_h}.
    \end{align*}
Thus, we complete the proof of the lemma.
$\hfill\Box$\end{proof}

\begin{figure}[!htbp]
  \centering
  {
  \includegraphics[height=4cm,width=6cm]{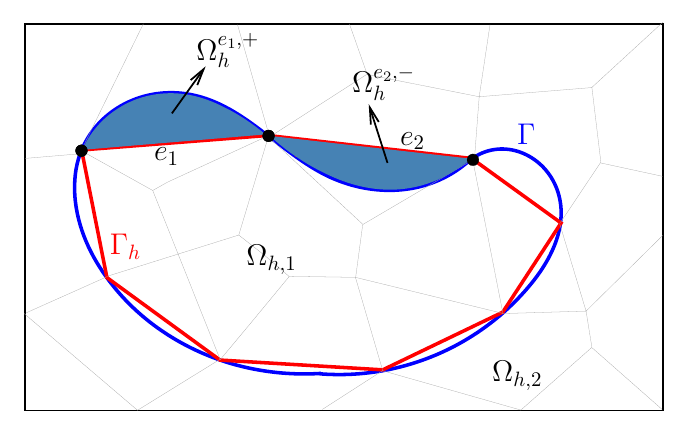}
  }
  \caption{The true boundary $\Gamma$ (blue curve), the approximated boundary $\Gamma_h$ (red lines) 
  and two typical regions $\Omega_h^{e_1,+},\Omega_h^{e_2,-}$ bounded by $\Gamma$ and $\Gamma_h$.}
  \label{Fig:yueya_interface}
  \end{figure}
  
Before we analyze the inf-sup condition of $b_{h1}$, we first introduce the following notation to distinguish between two types of crescent-shape regions $\Omega_h^e$. 
For each $e\in\E_h^\Gamma$, as shown in Fig. \ref{Fig:yueya_interface}), denote 
\[
\Omega_h^{e}=
\begin{cases}
\Omega_h^{e,+}, \quad\text{in} \,\Omega_1\backslash \Omega_{h,1},\\
\Omega_h^{e,-}, \quad\text{in} \,\Omega_{h,1}\backslash \Omega_1.
\end{cases}
\]
Moreover, define
\begin{equation*}
\begin{aligned}
W_{h,i}&=\Psi_{h,i}\cap L_0^2(\Omega_{h,i}),\,i=1,2,\qquad W_h=W_{h,1}\times W_{h,2},\\
\overline{W}_h&=\{q\in L_0^2(\Omega_h):q|_{\Omega_{h,i}} \text{is}\,\text{constant},\,i=1,2\}.
\end{aligned}
\end{equation*}

\begin{lem}\label{lem:inf_sup_0_b1}
When $h$ is sufficiently small, for $q\in W_h$, we have
\begin{align*}
\sup_{\bl v\in V_{h0}}\frac{b_{h1}(\bl v,q)}{\|\bl v\|_{0,h}}\gtrsim \|q\|_{0,\Omega_h}.
\end{align*}
\end{lem}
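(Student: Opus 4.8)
The plan is to imitate the construction used for $b_{h0}$ in Lemma \ref{lem:inf_sup_b0}, but to exploit the stronger hypothesis $q\in W_h$: since $q|_{\Omega_{h,i}}\in L_0^2(\Omega_{h,i})$ has vanishing mean on \emph{each} subdomain separately, I can solve a divergence problem on each $\Omega_{h,i}$ with homogeneous Dirichlet data. Concretely, for $i=1,2$ I would invoke the classical right-inverse of the divergence (e.g. \cite{FEM_Brenner,book:1986_NS}) to produce $\bl w_i\in\bl H_0^1(\Omega_{h,i})$ with $\nabla\cdot\bl w_i=-q_i$ and $\|\bl w_i\|_{1,\Omega_{h,i}}\lesssim\|q_i\|_{0,\Omega_{h,i}}$, and then set $\bl w=(\bl w_1,\bl w_2)$ and $\bl v=\bl Q_h\bl w$. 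Because each $\bl w_i$ vanishes on $\pt\Omega_{h,i}$, and $\pt\Omega_{h,1}=\Gamma_h$ while $\pt\Omega_{h,2}=\Gamma_h\cup\pt\Omega$, the trace of $\bl w$ is zero on both sides of $\Gamma_h$ and on $\pt\Omega$; in particular $\bl Q_b\bl w=0$ on $\pt\Omega_h$, so $\bl v=\bl Q_h\bl w\in V_{h0}$.

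The crucial simplification is that this choice annihilates every interface contribution in $b_{h1}$. On each $e\in\E_h^\Gamma$ the edge projection satisfies $\bl Q_b\bl w\cdot\bl\n_e=Q_b(\bl w\cdot\bl\n_e)=0$ from either side, since $\bl w|_{\Gamma_h}=0$; hence $[\bl Q_b\bl w\cdot\bl\n_h]=0$ and $\{\bl Q_b\bl w\cdot\bl\n_h\}=0$, and the two interface sums in $b_{h1}$ drop out. Only the volumetric term survives, and using the commutativity (\ref{comm_diag_property}) together with $\sigma\le\beta$ (so that testing $\pi_h(\nabla\cdot\bl w)$ against $q\in P_\sigma\subset P_\beta$ is exact), I obtain
\begin{align*}
b_{h1}(\bl Q_h\bl w,q)=-(\nabla_w\cdot\bl Q_h\bl w,q)_{\T_h}=-(\nabla\cdot\bl w,q)_{\T_h}=\|q\|_{0,\Omega_h}^2.
\end{align*}

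It then remains to prove $\|\bl Q_h\bl w\|_{0,h}\lesssim\|\bl w\|_{1,\Omega_h}$. The $(\kappa^{-1}\bl v_0,\bl v_0)$, weak-divergence, and stabilization contributions to the energy norm are bounded exactly as in Lemma \ref{lem:inf_sup_b0}, via the $L^2$-stability of $\bl Q_0$, the identity (\ref{comm_diag_property}), and the trace/approximation estimates of Lemmas \ref{lem:TRACE}--\ref{lem:BH}. For the penalty term I would once more use $\bl w|_{\Gamma_h}=0$ on both sides: writing $T^\alpha=\mathrm{id}+T_1^\alpha$ gives $[T^\alpha\bl Q_0\bl w]=[\bl Q_0\bl w-\bl w]+[T_1^\alpha\bl Q_0\bl w]$, where the trace inequality and the Bramble--Hilbert lemma yield $h_K^{-1/2}\|[\bl Q_0\bl w-\bl w]\|_{0,e}\lesssim|\bl w|_{1,K}$, while (\ref{T1_bound}) controls the second jump by $\|\bl Q_0\bl w\|_{0,K}$. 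Summing over $e\in\E_h^\Gamma$ bounds the penalty term by $\|\bl w\|_{1,\Omega_h}^2$. Combining the pieces gives $\|\bl Q_h\bl w\|_{0,h}\lesssim\|\bl w\|_{1,\Omega_h}\lesssim\|q\|_{0,\Omega_h}$, and the stated inf-sup estimate follows by testing the supremum against $\bl v=\bl Q_h\bl w$.

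The main obstacle is justifying that the two subdomain divergence problems admit solutions with an $h$-\emph{uniform} stability constant. Unlike Lemma \ref{lem:inf_sup_b0}, where the global polygonal domain coincides with the fixed $\Omega$ and the constant of the divergence right-inverse is frozen, here the subdomains $\Omega_{h,i}$ carry the interface boundary $\Gamma_h$, which varies with $h$. This is exactly where the hypothesis ``$h$ sufficiently small'' enters: for $h$ small the polygonal interface $\Gamma_h$ is a uniformly Lipschitz perturbation of the piecewise-$C^2$ curve $\Gamma$, so the family $\{\Omega_{h,i}\}$ satisfies a uniform cone/Lipschitz condition and the constant in $\|\bl w_i\|_{1,\Omega_{h,i}}\lesssim\|q_i\|_{0,\Omega_{h,i}}$ can be taken independent of $h$. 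I would isolate this as the one nontrivial ingredient; every remaining estimate is a routine adaptation of the bookkeeping already carried out for $b_{h0}$.
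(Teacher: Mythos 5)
Your proposal is essentially correct but takes a genuinely different route from the paper. You solve the divergence problem on the \emph{polygonal} subdomains, taking $\bl w_i\in\bl H_0^1(\Omega_{h,i})$ with $\nabla\cdot\bl w_i=-q_i$; since $\bl w$ then vanishes on both sides of $\Gamma_h$, the two interface sums in $b_{h1}$ drop out exactly and you get the clean identity $b_{h1}(\bl Q_h\bl w,q)=\|q\|_{0,\Omega_h}^2$. The paper instead solves the divergence problem on the \emph{fixed physical} subdomains $\Omega_i$: it first replaces $q_i$ by $\widehat q_i=q_i-\frac{1}{|\Omega_i|}\int_{\Omega_i}q_i$, takes $\bl w_i\in\bl H_0^1(\Omega_i)$ with $\nabla\cdot\bl w_i=-\widehat q_i$, and extends by zero outside $\Omega_i$. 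The price is that $\bl w$ vanishes on $\Gamma$ rather than on $\Gamma_h$, so the interface terms $I_1,I_2$ and a crescent-region mismatch $J_1$ survive; these are shown to be $O(h)\|q\|_{0,\Omega_h}^2$ using Lemmas \ref{lem:Omega_he2K} and \ref{lem:B_K1994} (the Bramble--King estimate $\|\bl w_i\|_{0,e}\lesssim\delta^{1/2}|\bl w_i|_{1,\Omega_h^e}$), and absorbed for $h$ small --- which is where the paper uses the hypothesis ``$h$ sufficiently small.'' What each approach buys: yours has trivial algebra at the interface but relocates all the difficulty into the claim that the divergence right-inverse on the $h$-dependent family $\{\Omega_{h,i}\}$ has an $h$-uniform stability constant; the paper's constant is frozen on the fixed $\Omega_i$ at the cost of the perturbation estimates.

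You correctly isolate the uniform-constant issue as the one nontrivial ingredient, and your heuristic (for $h$ small, $\Gamma_h$ is a uniformly Lipschitz perturbation of the piecewise-$C^2$ curve $\Gamma$, so the $\Omega_{h,i}$ satisfy a uniform cone condition) is sound, but as written it is asserted rather than proved, and it does not follow from the references you cite without additional work (one needs, e.g., a Bogovskii-type construction with constants controlled by the uniform Lipschitz character, or a domain-perturbation argument for the div inf-sup constant). If you want to keep your route, you must supply or cite such a result explicitly; otherwise the paper's detour through the fixed domains $\Omega_i$, together with the $J_1,I_1,I_2$ estimates, is precisely the machinery that makes this step unnecessary. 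The remainder of your argument (the exactness of the volumetric term via (\ref{comm_diag_property}) and $\sigma\le\beta$, and the bound $\|\bl Q_h\bl w\|_{0,h}\lesssim\|\bl w\|_{1,\Omega_h}$ including the treatment of the penalty term via $[\bl w]=0$ on $\Gamma_h$) matches the paper's bookkeeping and is fine.
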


\begin{proof}
  For each $q\in W_h$ and $i=1,2$, the function $q_i=q|_{\Omega_{h,i}}\in L_0^2(\Omega_{h,i})$ can be naturally extended
  to $\Omega_i\cup \Omega_{h,i}$ by filling the gap region $\Omega_i\backslash\Omega_{h,i}$ with the same polynomial values on neighboring mesh element. 
  For brevity, we still denote their extension by $q_i$.
  Define $\widehat{q}_i=q_i-\frac{1}{|\Omega_i|}\int_{\Omega_i} q_i\dif x\in \Omega_i$.
  Since $q_i\in L_0^2(\Omega_{h,i})$, it holds
    \begin{align}\label{bar_qh2Me}
    \widehat{q}_i=q_i-\frac{1}{|\Omega_i|}\bigg(\int_{\Omega_i}q_i\dif x-\int_{\Omega_{h,i}}q_i\dif x\bigg)
    =q_i-\frac{1}{|\Omega_i|}\bigg(\int_{\Omega_i\backslash\Omega_{h,i}}q_i\dif x-\int_{\Omega_{h,i}\backslash\Omega_i}q_i\dif x\bigg).
    \end{align}
    Similar to \cite{Yiliu_WG}, one gets
    \begin{equation}
    \begin{aligned}\label{bar_qh}
    \|\widehat{q}_i\|_{0,\Omega_i}\lesssim\|q_i\|_{0,\Omega_{h,i}},\qquad (\widehat{q}_i,q_i)_{\Omega_{h,i}}=\|q_i\|_{0,\Omega_{h,i}}^2.\\
    \end{aligned}
    \end{equation}
    Again, there exists a function \cite{FEM_Brenner} $\bl w_i\in \bl H_0^1(\Omega_i)$ such that $\nabla\cdot\bl w_i=-\widehat{q}_i$ on $\Omega_i$
    and $\|\bl w_i\|_{1,\Omega_i}\lesssim \|\widehat{q}_i\|_{0,\Omega_i}\lesssim \|q_i\|_{0,\Omega_{h,i}}$, where the hidden constant in $\lesssim$ may depend on the shape of $\Omega_i$ but not on $h$.
    Extend $\bl w_i$ to $\bbR$ by setting its value to be $0$ outside $\Omega_i$. 
    The extension is still denoted by $\bl w_i$, which shall not be mistaken for $\bl w_i^E$ defined in (\ref{extension}). 
    Let $\bl v=\bl Q_h\bl w\in V_{h0}$,
    by the commutative property (\ref{comm_diag_property}), (\ref{T1}) and (\ref{bar_qh}), one has
    \begin{equation}
    \begin{aligned}\label{b_h1_bound}
    b_{h1}(\bl Q_h\bl w,q)&=-(\nabla_w\cdot\bl Q_h \bl w,q)_{\T_h}-\sum_{e\in\E_h^\Gamma}\lal \{\bl Q_b\bl w\cdot\bl\n_h\},[T_1^\sigma q]\ral_e
    +\sum_{e\in\E_h^\Gamma}\lal [\bl Q_b\bl w\cdot\bl\n_h],\{q\}\ral_e\\    
    &=-(\pi_h(\nabla\cdot\bl w),q)_{\T_h}+I_1+I_2\\
    &=\sum_{i=1}^2\bigg(-(\nabla\cdot\bl w_i,q_i)_{\Omega_i}+(\nabla\cdot\bl w_i,q_i)_{\Omega_i\backslash \Omega_{h,i}}\bigg)+I_1+I_2\\
    &=\sum_{i=1}^2\bigg((\widehat{q}_i,q_i)_{\Omega_i}-(\widehat{q}_i,q_i)_{\Omega_i\backslash \Omega_{h,i}}\bigg)+I_1+I_2\\
    &=\sum_{i=1}^2\bigg((\widehat{q}_i,q_i)_{\Omega_{h,i}}-(\widehat{q}_i,q_i)_{\Omega_{h,i}\backslash \Omega_i}\bigg)+I_1+I_2\\
    &=\|q\|_{0,\Omega_h}^2+J_1+I_1+I_2.
    \end{aligned}
    \end{equation}
   We then estimate the right-hand side of (\ref{b_h1_bound}) one by one. By (\ref{bar_qh2Me}) and Lemma \ref{lem:Omega_he2K}, one gets
   \begin{align*}
   J_1:&=-\sum_{i=1}^2(\widehat{q}_i,q_i)_{\Omega_{h,i}\backslash \Omega_i}\lesssim \sum_{i=1}^2\sum_{e\in\E_h^\Gamma}\bigg|\int_{\Omega_h^e} \widehat{q}_i\,q_i\dif x\bigg|\\
   &\lesssim \sum_{i=1}^2\sum_{e\in\E_h^\Gamma}\|q_i\|_{0,\Omega_h^e}^2+\sum_{i=1}^2\frac{1}{|\Omega_i|}\sum_{e\in\E_h^\Gamma}\bigg(\int_{\Omega_h^e} q_i\dif x\bigg)^2\\
   &\lesssim \sum_{i=1}^2h\|q_i\|_{0,\Omega_{h,i}}^2+\sum_{i=1}^2\frac{1}{|\Omega_i|}\sum_{e\in\E_h^\Gamma}|\Omega_h^e|\|q_i\|_{0,\Omega_h^e}^2\\
   &\leq O(h)\sum_{i=1}^2\|q_i\|_{0,\Omega_{h,i}}^2= O(h)\|q\|_{0,\Omega_{h}}^2.
  \end{align*}
   Using Lemmas \ref{lem:T1_bounded},\,\ref{lem:Omega_he2K},\,\ref{lem:TRACE}, Inequality (\ref{BK_e2Omeg_e}), 
   one gets
   \begin{equation}
    \begin{aligned}\label{Qbw2q}
   I_1:&=-\sum_{e\in\E_h^\Gamma}\lal \{\bl Q_b\bl w\cdot\bl\n_h\},[T_1^\sigma q]\ral_e\leq \sum_{e\in\E_h^\Gamma}\|\{\bl Q_b\bl w\cdot\bl\n_h\}\|_{0,e}\|[T_1^\sigma q]\|_{0,e}\\
  &\lesssim \bigg(\sum_{e\in\E_h^\Gamma}h\|\{\bl Q_b\bl w\cdot\bl\n_h\}\|_{0,e}^2\bigg)^{1/2}\bigg(\sum_{e\in\E_h^\Gamma}h^{-1}\|[T_1^\sigma q]\|_{0,e}^2\bigg)^{1/2}\\
  &\lesssim \bigg(\sum_{e\in\E_h^\Gamma}\sum_{i=1}^2h\|\bl w_i\|_{0,e}^2\bigg)^{1/2}\|q\|_{0,\Omega_h}\\
  &\lesssim \bigg(\sum_{e\in\E_h^\Gamma}\sum_{i=1}^2h\delta|\bl w_i|_{1,\Omega_h^e}^2\bigg)^{1/2}\|q\|_{0,\Omega_h}\\
  &\lesssim h^2\|\bl w\|_{1,\Omega}\|q\|_{0,\Omega_h}\leq O(h^2)\|q\|_{0,\Omega_h}^2.
  \end{aligned}
\end{equation}
Similarly, one obtains
   \begin{align*}
    I_2:&=\sum_{e\in\E_h^\Gamma}\lal [\bl Q_b\bl w\cdot\bl\n_h],[\{q\}\ral_e\leq \sum_{e\in\E_h^\Gamma}\|[\bl Q_b\bl w\cdot\bl\n_h]\|_{0,e}\|\{q\}\|_{0,e}\\
   &\lesssim \bigg(\sum_{e\in\E_h^\Gamma}\sum_{i=1}^2 h^{-1}\|\bl w_i\|_{0,e}^2\bigg)^{1/2}\bigg(\sum_{e\in\E_h^\Gamma}h\|\{q\}\|_{0,e}^2\bigg)^{1/2}\\
   &\lesssim h\|\bl w\|_{1,\Omega}\|q\|_{0,\Omega_h}\leq O(h)\|q\|_{0,\Omega_h}^2.
    \end{align*} 

  Combining (\ref{b_h1_bound}) and boundedness of $J_1,I_1$ and $I_2$, for $h$ sufficiently small, we have
\begin{align*}
b_{h1}(\bl Q_h\bl w,q)\geq (1-O(h))\|q\|_{0,\Omega_h}^2\gtrsim \|q\|_{0,\Omega_h}^2.
\end{align*} 

  Next, we need to prove $\|\bl Q_h\bl w\|_{0,h}\lesssim \|q\|_{0,\Omega_h}$. By the fact that $(\bl Q_0\bl w\cdot\bl\n_h)|_e=(\bl Q_b(\bl Q_0\bl w))\cdot\bl\n_h$, 
  the commutative property (\ref{comm_diag_property}), properties of $L^2$ projection and (\ref{T1}),\,(\ref{T1_bound}),\,(\ref{BK_e2Omeg_e}), yielding
  \begin{align*}
    \|\bl Q_h\bl w\|_{0,h}^2&=\|\kappa^{-1}\bl Q_0\bl w\|_{0,\T_h}^2+\|\nabla_w\cdot\bl Q_h\bl w\|_{0,\T_h}^2+\sum_{K\in\T_h}h^{-1}\|(\bl Q_0\bl w-\bl Q_b \bl w)\cdot\bl \n_h\|_{0,\pt K}^2\\
    &\quad+\sum_{e\in\E_h^\Gamma}h^{-1}\|[T^\alpha \bl Q_0\bl w]\cdot\wt{\bl\n}\|_{0,e}^2\\
    & \lesssim \|\bl w\|_{0,\T_h}^2 +\|\nabla\cdot\bl w\|_{0,\T_h}^2+\sum_{K\in\T_h}h^{-1}\|\bl Q_0\bl w-\bl w\|_{0,\pt K}^2\\
    &\quad +\sum_{e\in\E_h^\Gamma}h^{-1}(\|[\bl Q_0\bl w-\bl w]\cdot\wt{\bl\n}\|_{0,e}^2+\|[\bl w]\cdot\wt{\bl\n}\|_{0,e}^2+\|[T_1^\alpha\bl Q_0\bl w]\cdot\wt{\bl\n}\|_{0,e}^2)\\
  &\lesssim \|\bl w\|_{1,\T_h}^2+\sum_{e\in\E_h^\Gamma}h^{-1}\|[\bl w]\|_{0,e}^2+\|\bl Q_0\bl w\|_{0,\T_h}^2\\
  &\lesssim \|\bl w\|_{1,\Omega}^2+\sum_{e\in\E_h^\Gamma}h^{-1}\delta|\bl w|_{1,\Omega_h^e}^2\\
  &\lesssim \|\bl w\|_{1,\Omega}^2\lesssim \|q\|_{0,\Omega_h}^2.
\end{align*} 
This completes the proof of the lemma.
$\hfill\Box$\end{proof}

\begin{lem}\label{lem:inf_sup_C_b1}
 For $\bar{q}\in \overline{W}_h$, it holds
    \begin{align*}
    \sup_{\bl v\in V_{h0}}\frac{b_{h1}(\bl v,\bar{q})}{\|\bl v\|_{0,h}}\gtrsim\|\bar{q}\|_{0,\Omega_h}.
    \end{align*}
\end{lem}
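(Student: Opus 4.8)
The plan is to mimic the proof of the $b_{h0}$ inf-sup condition in Lemma \ref{lem:inf_sup_b0}, exploiting the fact that a piecewise-constant pressure annihilates the two interface terms that distinguish $b_{h1}$ from $b_{h0}$. Write $\bar q|_{\Omega_{h,i}}=c_i$. Since each $c_i$ is an elementwise constant, all directional derivatives $\partial_{\bl\nu_h}^j\bar q$ with $j\geq 1$ vanish, so by the definition (\ref{T1}) we get $T_1^\sigma\bar q=0$ on each side of $\Gamma_h$, whence $[T_1^\sigma\bar q]=0$; this removes the last term of $b_{h1}$. The remaining interface term $\sum_{e\in\E_h^\Gamma}\lal[\bl v_b\cdot\bl\n_h],\{\bar q\}\ral_e$ will be forced to vanish by testing against a velocity whose normal trace is single-valued across $\Gamma_h$.

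Concretely, since $\bar q\in\overline{W}_h\subset L_0^2(\Omega_h)$, I would invoke the same classical result used in Lemma \ref{lem:inf_sup_b0} to obtain $\bl w\in\bl H_0^1(\Omega_h)$ with $\nabla\cdot\bl w=-\bar q$ and $\|\bl w\|_{1,\Omega_h}\lesssim\|\bar q\|_{0,\Omega_h}$, the hidden constant depending only on the (fixed) shape of $\Omega_h$. Set $\bl v=\bl Q_h\bl w\in V_{h0}$. Because $\bl w$ is continuous across the interior curve $\Gamma_h$, its normal trace is single-valued, so $\bl Q_b\bl w$ agrees from both sides and $[\bl Q_b\bl w\cdot\bl\n_h]=0$; hence the second interface term of $b_{h1}$ vanishes as well. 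What survives is $-(\nabla_w\cdot\bl Q_h\bl w,\bar q)_{\T_h}$, and the commutative property (\ref{comm_diag_property}) together with $\pi_h\bar q=\bar q$ (as $\bar q|_K$ is constant, hence in $P_\beta(K)$) gives $b_{h1}(\bl Q_h\bl w,\bar q)=-(\pi_h(\nabla\cdot\bl w),\bar q)_{\T_h}=-(\nabla\cdot\bl w,\bar q)_{\Omega_h}=\|\bar q\|_{0,\Omega_h}^2$. Note that the sign works out automatically, so no adjustment of the test function is needed.

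It then remains to bound $\|\bl Q_h\bl w\|_{0,h}\lesssim\|\bar q\|_{0,\Omega_h}$, which I would carry out verbatim as in Lemma \ref{lem:inf_sup_b0}: the $L^2$-stability of $\bl Q_0$ controls $\|\bl Q_0\bl w\|_{0,\T_h}$; the commutative property controls $\|\nabla_w\cdot\bl Q_h\bl w\|_{0,\T_h}=\|\pi_h(\nabla\cdot\bl w)\|_{0,\T_h}\leq\|\nabla\cdot\bl w\|_{0,\Omega_h}$; the identity $(\bl Q_0\bl w\cdot\bl\n_h)|_e=(\bl Q_b(\bl Q_0\bl w))\cdot\bl\n_h$ with the trace inequality (Lemma \ref{lem:TRACE}) and $h^{-1}\|\bl Q_0\bl w-\bl w\|_{0,\pt K}^2\lesssim\|\nabla\bl w\|_{0,\Omega_h}^2$ controls $s_h$; and the penalty term is handled by splitting $T^\alpha\bl Q_0\bl w=\bl Q_0\bl w+T_1^\alpha\bl Q_0\bl w$, replacing $[\bl Q_0\bl w]$ by $[\bl Q_0\bl w-\bl w]$ via $[\bl w]|_{\Gamma_h}=0$ and applying Lemma \ref{lem:T1_bounded} to the $T_1^\alpha$ part. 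Altogether $\|\bl Q_h\bl w\|_{0,h}\lesssim\|\bl w\|_{1,\Omega_h}\lesssim\|\bar q\|_{0,\Omega_h}$, so this choice of $\bl v$ yields $\sup_{\bl v\in V_{h0}}b_{h1}(\bl v,\bar q)/\|\bl v\|_{0,h}\geq\|\bar q\|_{0,\Omega_h}^2/\|\bl Q_h\bl w\|_{0,h}\gtrsim\|\bar q\|_{0,\Omega_h}$.

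The computation is essentially routine once these simplifications are in place, so the step I would be most careful about — and the genuine content of the lemma — is verifying that both interface terms of $b_{h1}$ vanish: the $[T_1^\sigma\bar q]$ term because $\bar q$ is elementwise constant (its polynomial extension carries no higher derivatives), and the $[\bl v_b\cdot\bl\n_h]$ term because the chosen $\bl w\in\bl H_0^1(\Omega_h)$ has a single-valued normal trace on $\Gamma_h$. This is precisely the feature that separates $\overline{W}_h$ from the generic pressures of Lemma \ref{lem:inf_sup_0_b1}, where neither simplification holds and the subdomain-wise construction with the Bramble--King estimate (\ref{BK_e2Omeg_e}) is required instead; the two lemmas are then meant to be combined by the macro-element argument to recover the full inf-sup for $b_{h1}$.
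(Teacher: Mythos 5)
Your proposal is correct and follows essentially the same route as the paper's proof: choose $\bl w\in\bl H_0^1(\Omega_h)$ with $\nabla\cdot\bl w=-\bar q$, set $\bl v=\bl Q_h\bl w$, observe that $[T_1^\sigma\bar q]|_{\Gamma_h}=0$ and $[\bl Q_b\bl w\cdot\bl\n_h]=0$ so that $b_{h1}$ reduces to $b_{h0}$, and then repeat the stability bound of Lemma \ref{lem:inf_sup_b0}. The paper states these steps tersely ("a similar argument of Lemma \ref{lem:inf_sup_b0} gives..."), and your write-up simply fills in the same details.
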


\begin{proof}
  For any $\bar{q}\in \overline{W}_h\subset L_0^2(\Omega_h)$, it is obvious that $[T_1^\sigma\bar{q}]|_{\Gamma_h}=0$. 
  Reiterating from \cite{FEM_Brenner}, there exists $\bl w\in \bl H_0^1(\Omega_h)$ such that $\nabla\cdot\bl w=-\bar{q}$ and $\|\bl w\|_{1,\Omega_h}\lesssim\|\bar{q}\|_{0,\Omega_h}$, where the hidden constant in $\lesssim$ is independent of $h$.
  Set $\bl v=\bl Q_h\bl w$, it implies that $[\bl Q_b\bl w]|_{\Gamma_h}=0$. Then, a similar argument of Lemma \ref{lem:inf_sup_b0} gives
  \begin{align*}
    b_{h1}(\bl Q_h\bl w,\bar{q})&=-(\nabla_w\cdot\bl Q_h {\bl w},\bar{q})_{\T_h}=\|\bar{q}\|_{0,\Omega_h}^2.
  \end{align*} 
  and
  $$\|\bl Q_h\bl w\|_{0,h}\lesssim\|\bar{q}\|_{0,\Omega_h}.$$
  Thus, we complete the proof of this lemma.
$\hfill\Box$\end{proof}

Next, using lemmas \ref{lem:inf_sup_0_b1},\,\ref{lem:inf_sup_C_b1}, we prove the inf-sup condition of $b_{h1}$ for the pair $(\bl v,q)\in V_{h0}\times\Psi_{h0}$.
\begin{lem}\label{lem:inf_sup_b1}
  (Inf-sup condition). When h is sufficiently small, for each ${q}\in \Psi_{h0}$, we obtain
    \begin{align*}
    \sup_{\bl v\in V_{h0}}\frac{b_{h1}(\bl v,{q})}{\|\bl v\|_{0,h}}\gtrsim\|{q}\|_{0,\Omega_h}.
   \end{align*}
\end{lem}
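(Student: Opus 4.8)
The plan is to establish the full inf-sup condition by splitting $\Psi_{h0}$ into its mean-zero-per-subdomain part and its piecewise-constant part, and then combining the two partial inf-sup estimates of Lemmas \ref{lem:inf_sup_0_b1} and \ref{lem:inf_sup_C_b1} through the macro-element (Boland--Nicolaides) technique of \cite{book:1986_NS}. First I would decompose any $q\in\Psi_{h0}$ as $q=q_0+\bar q$, where $\bar q\in\overline{W}_h$ is the per-subdomain $L^2$ average of $q$ on each $\Omega_{h,i}$ and $q_0=q-\bar q$. Since $\int_{\Omega_h}q=0$ one verifies $\bar q\in\overline{W}_h$, and since $\int_{\Omega_{h,i}}q_0=0$ one gets $q_0\in W_h$. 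A Cauchy--Schwarz argument on each subdomain then yields the stability of this splitting, $\|\bar q\|_{0,\Omega_h}\le\|q\|_{0,\Omega_h}$, hence $\|q_0\|_{0,\Omega_h}\lesssim\|q\|_{0,\Omega_h}$, together with $\|q\|_{0,\Omega_h}^2\lesssim\|q_0\|_{0,\Omega_h}^2+\|\bar q\|_{0,\Omega_h}^2$.

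Next, from the explicit constructions inside the proofs of Lemmas \ref{lem:inf_sup_0_b1} and \ref{lem:inf_sup_C_b1}, I would pick test functions $\bl v_0=\bl Q_h\bl w_0\in V_{h0}$ with $b_{h1}(\bl v_0,q_0)\gtrsim\|q_0\|_{0,\Omega_h}^2$ and $\|\bl v_0\|_{0,h}\lesssim\|q_0\|_{0,\Omega_h}$, and $\bar{\bl v}=\bl Q_h\bl w\in V_{h0}$ with $b_{h1}(\bar{\bl v},\bar q)\gtrsim\|\bar q\|_{0,\Omega_h}^2$ and $\|\bar{\bl v}\|_{0,h}\lesssim\|\bar q\|_{0,\Omega_h}$. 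The candidate is $\bl v=\theta\,\bl v_0+\bar{\bl v}$ for a fixed small $\theta>0$ chosen independently of $h$, so that $\|\bl v\|_{0,h}\lesssim\|q\|_{0,\Omega_h}$. Expanding $b_{h1}(\bl v,q)$ bilinearly produces the two coercive diagonal terms $\theta b_{h1}(\bl v_0,q_0)$ and $b_{h1}(\bar{\bl v},\bar q)$, plus the two cross terms $\theta b_{h1}(\bl v_0,\bar q)$ and $b_{h1}(\bar{\bl v},q_0)$.

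The crux is the asymmetric treatment of the cross terms, which dictates that the $W_h$-test function must carry the small coefficient. The term $b_{h1}(\bl v_0,\bar q)$ is only generically bounded, $|b_{h1}(\bl v_0,\bar q)|\lesssim\|\bl v_0\|_{0,h}\|\bar q\|_{0,\Omega_h}\lesssim\|q_0\|_{0,\Omega_h}\|\bar q\|_{0,\Omega_h}$ by Lemma \ref{lem:bound_b}, but it carries the factor $\theta$ and can be absorbed by Young's inequality. The other cross term $b_{h1}(\bar{\bl v},q_0)$ is unscaled, so I must show it is genuinely small. Here I would use that $\bar{\bl v}=\bl Q_h\bl w$ with $\bl w\in\bl H_0^1(\Omega_h)$ and $\nabla\cdot\bl w=-\bar q$: the volume part equals $(\bar q,q_0)_{\Omega_h}$ and vanishes because $\bar q$ is constant on each $\Omega_{h,i}$ while $q_0$ is mean-zero there; the jump term $\sum_{e}\lal[\bl Q_b\bl w\cdot\bl\n_h],\{q_0\}\ral_e$ vanishes since the continuity of $\bl w$ across $\Gamma_h$ forces $[\bl Q_b\bl w\cdot\bl\n_h]=0$; and only the correction term $-\sum_{e}\lal\{\bl Q_b\bl w\cdot\bl\n_h\},[T_1^\sigma q_0]\ral_e$ survives, which the chain of estimates used for $I_1$ in Lemma \ref{lem:inf_sup_0_b1} (Lemmas \ref{lem:T1_bounded}, \ref{lem:Omega_he2K}, \ref{lem:TRACE} and the Bramble--King inequality (\ref{BK_e2Omeg_e})) bounds by $h^{3/2}\|\bl w\|_{1,\Omega}\|q_0\|_{0,\Omega_h}\lesssim h^{3/2}\|\bar q\|_{0,\Omega_h}\|q_0\|_{0,\Omega_h}$.

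Finally, collecting all terms and applying Young's inequality, I would first fix $\theta$ small enough that the $\theta$-scaled cross term is absorbed into the $\|\bar q\|_{0,\Omega_h}$ coercivity, and then take $h$ sufficiently small so that the remaining $O(h^{3/2})$ cross term is absorbed as well; this yields $b_{h1}(\bl v,q)\gtrsim\|q_0\|_{0,\Omega_h}^2+\|\bar q\|_{0,\Omega_h}^2\gtrsim\|q\|_{0,\Omega_h}^2$, and dividing by $\|\bl v\|_{0,h}\lesssim\|q\|_{0,\Omega_h}$ gives the claim. The main obstacle is exactly recognizing this asymmetry and verifying the smallness of the unscaled cross term, which hinges on the two exact cancellations (mean-zero of $q_0$ against the piecewise-constant $\bar q$, and continuity of $\bl w$ across $\Gamma_h$) together with the Bramble--King $O(h^{3/2})$ gain on the surviving correction term.
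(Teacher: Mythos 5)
Your proposal is correct and follows essentially the same route as the paper: the same splitting $q=q_0+\bar q$ into $W_h\oplus\overline{W}_h$, the same macro-element combination of Lemmas \ref{lem:inf_sup_0_b1} and \ref{lem:inf_sup_C_b1}, and the same bilinear expansion with one component carrying a small fixed weight. The only (harmless) difference is a mirror image of the paper's choice: the paper scales the $\overline{W}_h$-test function and shows the unscaled cross term $b_{h1}(\wt{\bl v},\bar q)$ is $O(h)$, whereas you scale the $W_h$-test function and show $b_{h1}(\bar{\bl v},q_0)$ is small via the same two cancellations (vanishing volume term by orthogonality, vanishing jump by continuity of $\bl w$ across $\Gamma_h$) plus the Bramble--King bound on the surviving $T_1^\sigma$ correction — both versions close the argument.
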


\begin{proof}
  From the definitions of spaces, we derive immediately the orthogonality of $\Psi_{h,i},\,i=1,2$ (see e.g. the Theorem 1.12 of Chapter 2 in \cite{book:1986_NS})
  $$\Psi_{h,i}=W_{h,i}\oplus\bbR.$$
  Thus each function $q\in\Psi_{h0}$ can be split 
  $$q=\widetilde{q}+\bar{q},$$
  where $\bar{q}|_{\Omega_{h,i}}=\frac{1}{|\Omega_{h,i}|}\int_{\Omega_{h,i}}q\dif x,\,i=1,2$
  and $\wt{q}\in W_h$. Observe that $\bar{q}\in \overline{W}_h$ and that the orthogonality of the decomposition implies: $$\|q\|_{0,\Omega_h}^2=\|\wt{q}\|_{0,\Omega_h}^2+\|\bar{q}\|_{0,\Omega_h}^2.$$
Owing to Lemma \ref{lem:inf_sup_0_b1}, there exists function $\bl w_i\in\bl H_0^1(\Omega_i),\,i=1,2$ such that $\|\bl w_i\|_{1,\Omega_i}\lesssim \|\wt{q_i}\|_{0,\Omega_{h,i}}$ and
$\wt{\bl v}=\bl Q_h \bl w\in V_{h0}$ satisfies
\begin{align*}
  b_{h1}(\wt{\bl v},\wt{q})\geq C_0 \|\wt{q}\|_{0,\Omega_h}^2,\qquad \|\wt{\bl v}\|_{0.h}\lesssim\|\wt{q}\|_{0,\Omega_h}.
\end{align*} 
Similarly, by Lemma \ref{lem:inf_sup_C_b1}, there exists s function $\bar{\bl v}\in V_{h0}$ such that
\begin{align*}
  b_{h1}(\bar{\bl v},\bar{q})=\|\bar{q}\|_{0,\Omega_h}^2,\qquad \|\bar{\bl v}\|_{0.h}\lesssim\|\bar{q}\|_{0,\Omega_h}.
\end{align*} 
We propose to associate with $q$ the function $\bl v=\wt{\bl v}+\eta\bar{\bl v}\in V_{h0}$ for some $\eta>0$.
We expect to adjust the parameter $\eta$ so that $(\bl v,q)\in V_{h0}\times \Psi_{h0}$ verifies the inf-sup condition of $b_{h1}$.
We obtain
\begin{align*}
  b_{h1}({\bl v},{q})&=b_{h1}(\wt{\bl v},\wt{q})+\eta b_{h1}(\bar{\bl v},\bar{q})+b_{h1}(\wt{\bl v},\bar{q})+\eta b_{h1}(\bar{\bl v},\wt{q})\\
&\geq C_0\|\wt{q}\|_{0,\Omega_h}^2+\eta\|\bar{q}\|_{0,\Omega_h}^2+b_{h1}(\wt{\bl v},\bar{q})+\eta b_{h1}(\bar{\bl v},\wt{q}).
\end{align*} 
We observe that $[T_1^\sigma\bar{q}]|_{\Gamma_h}=0$ and $\wt{\bl v}=\{\bl Q_0 \bl w,\bl Q_b \bl w\}\in V_{h0}$. By (\ref{weak_div}) and  adopting a similar argument as in (\ref{Qbw2q}), yielding 
\begin{align*}
  b_{h1}(\wt{\bl v},\bar{q})&=-(\nabla_w\cdot\wt{\bl v},\bar{q})_{\T_h}+\sum_{e\in\E_h^\Gamma}\lal [\wt{\bl v}_b\cdot\bl\n_h],\{\bar{q}\}\ral_e=\sum_{K\in\T_h}(\wt{\bl v}_0,\nabla\bar{q})_K-\sum_{e\in\E_h^\Gamma}\lal \{\wt{\bl v}_b\cdot\bl\n_h\},[\bar{q}]\ral_e\\
  &=-\sum_{e\in\E_h^\Gamma}\lal \{\bl Q_b \bl w\cdot\bl\n_h\},[\bar{q}]\ral_e\leq \sum_{e\in\E_h^\Gamma}\|\{\bl Q_b \bl w\cdot\bl\n_h\}\|_{0,e}\|[\bar{q}]\|_{0,e}\\
  &\lesssim \bigg(\sum_{e\in\E_h^\Gamma} \sum_{i=1}^2 h^{-1}\|\bl w_i\|_{0,e}^2\bigg)^{1/2}\bigg(\sum_{e\in\E_h^\Gamma} h\|\bar{q}\|_{0,e}^2\bigg)^{1/2}\\
  &\lesssim \bigg(\sum_{e\in\E_h^\Gamma} h^{-1}\delta|\bl w|_{1,\Omega_h^e}^2\bigg)^{1/2}\|\bar{q}\|_{0,\Omega_h}\\
  &\lesssim h\|\bl w\|_{1,\Omega}\|\bar{q}\|_{0,\Omega_h}\leq C_1h\|\wt{q}\|_{0,\Omega_h}\|\bar{q}\|_{0,\Omega_h}.
\end{align*} 
using the boundness of $b_{h1}$ in Lemma \ref{lem:bound_b}, one derives
\begin{align*}
  b_{h1}(\bar{\bl v},\wt{q})&\lesssim \|\bar{\bl v}\|_{0,h}\|\wt{q}\|_{0,\Omega_h}\leq C_2 \|\bar{q}\|_{0,\Omega_h}\|\wt{q}\|_{0,\Omega_h}.
\end{align*} 
Hence, collecting these results, one obtains
\begin{align*}
  b_{h1}({\bl v},{q})&\geq C_0 \|\wt{q}\|_{0,\Omega_h}^2+\eta\|\bar{q}\|_{0,\Omega_h}^2-C_1h\|\wt{q}\|_{0,\Omega_h}\|\bar{q}\|_{0,\Omega_h}-\eta C_2\|\bar{q}\|_{0,\Omega_h}\|\wt{q}\|_{0,\Omega_h}\\
  &\geq \bigg(C_0-\frac{C_1}{2}h-\eta C_2\epsilon\bigg)\|\wt{q}\|_{0,\Omega_h}^2+\eta\bigg(1-\frac{C_1}{2}h-C_2\epsilon^{-1}\bigg)\|\bar{q}\|_{0,\Omega_h}^2.
\end{align*} 
with a choice $\epsilon=2 C_2$ and $\eta=\frac{C_0}{4C_2^2}$ and when $h$ is sufficiently small, we deduce that
\begin{align*}
  b_{h1}({\bl v},{q})\gtrsim \|\wt{q}\|_{0,\Omega_h}^2+\|\bar{q}\|_{0,\Omega_h}^2=\|{q}\|_{0,\Omega_h}^2.
\end{align*} 
Finally, it holds
\begin{align*}
 \|\bl v\|_{0,h}&\leq \|\wt{\bl v}\|_{0,h}+\eta\|\bar{\bl v}\|_{0,h}
\lesssim \|\wt{q}\|_{0,\Omega_h}+\eta\|\bar{q}\|_{0,\Omega_h}\lesssim \|{q}\|_{0,\Omega_h}.
\end{align*} 
$\hfill\Box$\end{proof}

From the Theorem 3.1 in \cite{well_posedness_Nicolaides} (also see \cite{book:Boffi_Brezzi}), Lemmas \ref{lem:bound_b},\,\ref{lem:inf_sup_b0} and \ref{lem:inf_sup_b1} immediately imply that

\begin{thm}
 The discrete problem (\ref{Ph}) has a unique solution and 
 \begin{equation}
\begin{aligned}\label{A_h_bound}
\|\bl\sigma_h,\zeta_h\|_H\lesssim \sup_{(\bl v, q)\in V_{h0}\times \Psi_{h0}}\frac{\A_h((\bl\sigma_h,\zeta_h),(\bl v, q))}{\|\bl v, q\|_H},
\qquad \forall\,(\bl\sigma_h,\zeta_h)\in V_{h0}\times \Psi_{h0},
\end{aligned}
\end{equation}
where
\begin{equation}
\begin{aligned}\label{A}
 \A((\bl\sigma_h,\zeta_h),(\bl v, q))&:=a_h(\bl\sigma_h,\bl v)+b_{h1}(\bl v,\zeta_h)+b_{h0}(\bl\sigma_h,q),\\
\|\bl\sigma_h,\zeta_h\|_H&:=(\|\bl\sigma_h\|_{0,h}^2+\|\zeta_h\|_{0,\Omega_h}^2)^{1/2}.
\end{aligned}
\end{equation}
\end{thm}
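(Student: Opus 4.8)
The plan is to read (\ref{Ph}) as a \emph{generalized} (nonsymmetric) saddle-point problem governed by the composite form $\A$ of (\ref{A}) on $(V_{h0}\times\Psi_{h0})^2$, equipped with the product norm $\|\cdot\|_H$, and to verify that the structural hypotheses of the abstract theory in \cite{well_posedness_Nicolaides} (see also \cite{book:Boffi_Brezzi}) are exactly the estimates proved in the preceding lemmas. The decisive feature is that the two coupling forms differ, $b_{h1}\neq b_{h0}$, so the classical symmetric Brezzi theory does not apply and one must instead control the pressure variable in the trial slot (through $b_{h1}$) and in the test slot (through $b_{h0}$) by two \emph{separate} inf-sup conditions. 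Because (\ref{Ph}) is a square finite-dimensional system, the global inf-sup estimate (\ref{A_h_bound}) is equivalent to unique solvability, so it suffices to establish (\ref{A_h_bound}).

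First I would record the three inputs. Continuity of $\A$ with respect to $\|\cdot\|_H$ follows termwise from the boundedness bounds of Lemma \ref{lem:bound_b} together with Cauchy--Schwarz in the product norm. Ellipticity is immediate and in fact holds on the whole space: by the definition of the energy norm, $a_h(\bl v,\bl v)=\|\bl v\|_{0,h}^2$ for every $\bl v\in V_{h0}$, which is stronger than coercivity on the relevant kernels. The two inf-sup conditions are precisely Lemma \ref{lem:inf_sup_b0} for the constraint form $b_{h0}$ and Lemma \ref{lem:inf_sup_b1} for the test form $b_{h1}$ (the latter valid for $h$ sufficiently small, matching the hypothesis of the theorem). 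With continuity, full coercivity of $a_h$, and both inf-sup bounds in hand, Theorem 3.1 of \cite{well_posedness_Nicolaides} yields (\ref{A_h_bound}) and hence the existence and uniqueness of $(\bl u_h,p_h)$.

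If one prefers an explicit derivation of (\ref{A_h_bound}) rather than a black-box citation, I would assemble the test pair by combining coercivity with the two inf-sup conditions in the standard generalized-saddle-point fashion. Given $(\bl\sigma_h,\zeta_h)$, testing against $(\bl\sigma_h,0)$ extracts $\|\bl\sigma_h\|_{0,h}^2$ via $a_h(\bl\sigma_h,\bl\sigma_h)$, leaving the cross term $b_{h1}(\bl\sigma_h,\zeta_h)$; Lemma \ref{lem:inf_sup_b1} supplies a $\bl v_\zeta\in V_{h0}$ with $\|\bl v_\zeta\|_{0,h}\lesssim\|\zeta_h\|_{0,\Omega_h}$ and $b_{h1}(\bl v_\zeta,\zeta_h)\gtrsim\|\zeta_h\|_{0,\Omega_h}^2$, while Lemma \ref{lem:inf_sup_b0} supplies a pressure test direction through which $b_{h0}(\bl\sigma_h,\cdot)$ pins down the remaining component. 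Forming $(\bl v,q)$ as a suitably weighted combination of these and absorbing the cross terms $b_{h1}(\bl\sigma_h,\zeta_h)$ and $a_h(\bl\sigma_h,\bl v_\zeta)$ with Young's inequality and a small parameter would bound $\|\bl\sigma_h\|_{0,h}^2+\|\zeta_h\|_{0,\Omega_h}^2$ from below by $\A$, with $\|\bl v,q\|_H\lesssim\|\bl\sigma_h,\zeta_h\|_H$. I expect the main obstacle to lie not in this final assembly but in the asymmetry itself: choosing the relative weights so that the uncontrolled cross term $b_{h1}(\bl\sigma_h,\zeta_h)$ is dominated hinges on the genuinely nontrivial inf-sup for $b_{h1}$, whose proof (Lemma \ref{lem:inf_sup_b1}) rests on the macro-element splitting $\Psi_{h,i}=W_{h,i}\oplus\bbR$ and the crescent-region estimates. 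At the level of the present theorem the remaining work is the bookkeeping that matches these ingredients to the abstract hypotheses.
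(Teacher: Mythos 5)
Your proposal is correct and follows essentially the same route as the paper: the paper's proof is precisely the observation that Lemma \ref{lem:bound_b} (boundedness), the identity $a_h(\bl v,\bl v)=\|\bl v\|_{0,h}^2$, and the two inf-sup conditions of Lemmas \ref{lem:inf_sup_b0} and \ref{lem:inf_sup_b1} verify the hypotheses of Theorem 3.1 in \cite{well_posedness_Nicolaides} for the nonsymmetric form $\A$. Your additional sketch of an explicit test-pair construction is a reasonable unpacking of that abstract result but is not needed and is not what the paper does.
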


\subsection{Compatibility of (\ref{Ph}) and an equvalent discrete form}
We note that the space $\Psi_{h0}$ is not commonly used in the practical implementation. It is more convenient to construct a set of basis for
$\Psi_h$ than for $\Psi_{h0}$. Hence, we introduce a new problem: Find $(\bl u_h,p_h)\in V_{h0}\times \Psi_h$ such that
\begin{equation}
  \left\{
  \begin{aligned}\label{Ph_Qh}
    a_h(\bl u_h,\bl v)+b_{h1}(\bl v,p_h)&=\ell(\bl v),\qquad&\forall\,&\bl v\in V_{h0},\\
    b_{h0}(\bl u,q_h)+\sum_{e\in\E_h^\Gamma}\lal[\bl v_b\cdot\bl\n_h],\{\bar{q}_h\}\ral_e&=-(f^E-\overline{f^E},q_h)_{\T_h},\qquad &\forall\,&q_h\in \Psi_{h},
  \end{aligned}
  \right.
  \end{equation}
  where $\bar{q}_h=\frac{1}{|\Omega_h|}\int_{\Omega_h}q_h\dif x$ and $\overline{f^E}=\frac{1}{|\Omega_h|}\int_{\Omega_h}f^E\dif x$.

\begin{lem}\label{Ph=Ph_Qh}
The systems (\ref{Ph}) and (\ref{Ph_Qh}) are equivalent in the sense that they admit the
same solution in $V_{h0}\times \Psi_{h0}$.
\end{lem}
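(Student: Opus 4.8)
The plan is to reduce the equivalence to a purely algebraic statement about how the two systems treat the global constant component of the pressure, and then close the argument using the unique solvability of (\ref{Ph}) already furnished by the preceding theorem. Since $\Psi_{h0}\subset\Psi_h$, the first equations of (\ref{Ph}) and (\ref{Ph_Qh}) are literally the same line once $p_h\in\Psi_{h0}$; hence the entire content of the lemma lies in comparing the two \emph{second} equations. Concretely, I would prove the biconditional: for a fixed $\bl u_h\in V_{h0}$, the second equation of (\ref{Ph_Qh}) holds for all $q_h\in\Psi_h$ if and only if $b_{h0}(\bl u_h,r)=-(f^E,r)_{\T_h}$ holds for all $r\in\Psi_{h0}$, i.e. the second equation of (\ref{Ph}). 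Granting this, a pair $(\bl u_h,p_h)\in V_{h0}\times\Psi_{h0}$ solves one system exactly when it solves the other; existence of such a pair is inherited from (\ref{Ph}) through the forward direction, and uniqueness in $\Psi_{h0}$ follows from the backward direction together with the unique solvability of (\ref{Ph}).

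The key identity is that integrating the weak divergence against a constant reproduces the interface jump. Taking $q=1$ in (\ref{weak_div}) gives $(\nabla_w\cdot\bl u_h,1)_K=\lal\bl u_{h,b}\cdot\bl\n_K,1\ral_{\pt K}$; summing over $K\in\T_h$, the contributions on every interior edge cancel and those on $\pt\Omega_h$ vanish since $\bl u_{h,b}=0$ there, leaving only interface edges, where the two abutting elements carry opposite normals $\pm\bl\n_h$ and a possibly double-valued $\bl u_{h,b}$. Hence $\sum_{K}(\nabla_w\cdot\bl u_h,1)_K=\sum_{e\in\E_h^\Gamma}\lal[\bl u_{h,b}\cdot\bl\n_h],1\ral_e$. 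Now decompose any $q_h\in\Psi_h$ as $q_h=\wt q_h+\bar q_h$ with $\bar q_h$ its global mean and $\wt q_h\in\Psi_{h0}$. Because $\bar q_h$ takes the same value on both subdomains, $\{\bar q_h\}=\bar q_h$ and $[\bar q_h]=0$, so the identity above yields $b_{h0}(\bl u_h,\bar q_h)=-\sum_{e\in\E_h^\Gamma}\lal[\bl u_{h,b}\cdot\bl\n_h],\bar q_h\ral_e$, which exactly cancels the added term $\sum_{e\in\E_h^\Gamma}\lal[\bl u_{h,b}\cdot\bl\n_h],\{\bar q_h\}\ral_e$ in (\ref{Ph_Qh}). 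Thus the entire left-hand side of the second equation of (\ref{Ph_Qh}) collapses to $b_{h0}(\bl u_h,\wt q_h)$.

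It remains to match the right-hand side. A short mean-value computation, using $\int_{\Omega_h}\wt q_h=0$ and the definition of $\overline{f^E}$, gives $(f^E-\overline{f^E},q_h)_{\T_h}=(f^E,\wt q_h)_{\T_h}$. Consequently the second equation of (\ref{Ph_Qh}) for $q_h$ is precisely $b_{h0}(\bl u_h,\wt q_h)=-(f^E,\wt q_h)_{\T_h}$; as $q_h$ ranges over $\Psi_h$ the residual $\wt q_h$ ranges over all of $\Psi_{h0}$, which establishes the claimed equivalence of the second equations and hence both directions of the lemma. I would also record that $b_{h1}(\bl v,c)=0$ for every global constant $c$ (the last term of $b_{h1}$ vanishes since $T_1^\sigma c=0$, and the first two terms cancel by the same jump identity because $\{c\}=c$); this explains why the first equation never sees the constant part of $p_h$, so that (\ref{Ph_Qh}) determines $p_h$ only up to a constant and the normalization $p_h\in\Psi_{h0}$ is exactly what selects the solution of (\ref{Ph}).

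The only genuine obstacle is the weak-divergence-against-a-constant identity together with the orientation bookkeeping across $\Gamma_h$: one must track that $\bl u_{h,b}$ may be double-valued on interface edges and that the outward normals of the two adjacent elements are $\pm\bl\n_h$, so that the surviving boundary sum is exactly $\sum_{e\in\E_h^\Gamma}\lal[\bl u_{h,b}\cdot\bl\n_h],\cdot\ral_e$ rather than zero. Once this is in hand the remainder is routine mean-value algebra with no analytic content; in particular the argument is purely algebraic and requires neither the smallness of $h$ nor any of the inf-sup estimates.
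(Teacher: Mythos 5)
Your proof is correct and follows essentially the same route as the paper's: decompose $\Psi_h=\Psi_{h0}\oplus\bbR$, observe that $b_{h1}(\bl v,\text{const})=0$, and use the weak-divergence-against-a-constant jump identity to absorb the constant component into the added interface term and the $\overline{f^E}$ correction. You merely spell out in more detail what the paper leaves implicit (the jump identity $\sum_K\lal\bl u_b\cdot\bl\n_K,1\ral_{\pt K}=\sum_{e\in\E_h^\Gamma}\lal[\bl u_b\cdot\bl\n_h],1\ral_e$ and the explicit backward direction), so no issues.
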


\begin{proof}
Evidently, the space $\Psi_h$ can be decomposed into $\Psi_{h0}\oplus \bbR$, and the decomposition is orthogonal in the $L^2(\Omega_h)$-norm. Using the definition (\ref{weak_div}), one can immediately observe that $b_{h1}(\bl v_h,1)=0$ for all $\bl v_h\in V_{h0}$. Using $q_h-\bar q_h\in\Psi_{h0}$, the second equation of (\ref{Ph}) implies that, for all $q_h\in\Psi_h$
$$b_{h0}(\bl u_h,q_h-\bar q_h)=-(f^E,q_h-\bar q_h)_{\T_h},$$
then, it is arranged to be
\begin{equation*}
\begin{aligned}
  b_{h0}(\bl u_h,q_h)+\sum_{e\in\E_h^\Gamma}\lal[\bl v_b\cdot\bl\n_h],\{\bar{q}_h\}\ral_e=-(f^E-\overline{f^E},q_h)_{\T_h},
\end{aligned}
\end{equation*}
which is just the second equation of (\ref{Ph_Qh}).
$\hfill\Box$\end{proof}

Lemma \ref{Ph=Ph_Qh} illustrates that 
as long as (\ref{Ph}) is well-posed, problem (\ref{Ph_Qh}) is solvable, i.e., the discrete linear system turns out to be ``compatible''. However, its solution is not unique since $(\bl u_h, p_h+\text{constant})$ is also a solution. Various ways can be used to deal with this case, such as, utilizing a Krylov subspace iterative solver, or imposing a constraint to the linear system to ensure that the solution stays in $V_{h0}\times\Psi_{h0}$.
\section{Error analysis}\label{sec5:err}
 In this section, we analyze the approximation error of the weak Galerkin discretization (\ref{Ph}). Let $\bl u,\,p$ be the
solution to the problem (\ref{primal_problem_interface}), and $\bl u_h=\{\bl u_0,\bl u_b\}$, $p_h$ be the solution to the weak Galerkin formulation (\ref{Ph}).
 Define
 $$\bl e_u=\bl Q_h \bl u^E-\bl u_h=\{\bl Q_0 \bl u^E-\bl u_0,\bl Q_b \bl u^E-\bl u_b\},\qquad e_p=\bbQ_h p^E-p_h.$$

The error equation is stated as follows:
 \begin{lem}\label{lem:err_equation}
  The exact solution $(\bl u,p)$ of system (\ref{primal_problem_interface}) satisfies
  \begin{align}\label{consistency_err}
   \A(\{\bl e_u,e_p\},\{\bl v,q\})=E_u+s_h(\bl Q_h \bl u^E,\bl v)+E_p+E_R,
  \end{align}
  where $s_h(\cdot,\cdot)$ is the stabilization term and the terms $E_u,E_p$ and $E_R$ are defined by
  \begin{align*}
    E_u:&=(\kappa^{-1} (\bl Q_0\bl u^E-\bl u^E),\bl v_0)_{\T_h}+\sum_{e\in\E_h^\Gamma}h_K^{-1}\lal[T^\alpha (\bl Q_0 \bl u^E- \bl u^E)]\cdot\wt{\bl\n},[T^\alpha\bl v_0]\cdot\wt{\bl\n}\ral_e,\\
   E_p:&=\sum_{K\in\T_h}\lal (\bl v_0-\bl v_b)\cdot\bl\n_h,\bbQ_h p^E-p^E\ral_{\pt K}+\sum_{e\in\E_h^\Gamma}\lal[\bl v_b\cdot\bl\n_h],\{\bbQ_h p^E-p^E\}\ral_e\\
   &\quad-\sum_{e\in\E_h^\Gamma}\lal\{\bl v_b\cdot\bl\n_h\},[T_1^\sigma(\bbQ_h p^E-p^E)]\ral_e,\\
   E_R:&=(\kappa^{-1}\bl u^E+\nabla p^E,\bl v_0)_{\T_h}+(\nabla\cdot \bl u^E-f^E,\nabla_w\cdot\bl v)_{\T_h}-(\nabla\cdot \bl u^E-f^E,q)_{\T_h}\\
   &\quad+\sum_{e\in\E_h^\Gamma}\bigg(h^{-1}\lal[T^\alpha \bl u^E]\cdot\wt{\bl\n}-\wt g_N,[T^\alpha\bl v_0]\cdot\wt{\bl\n}\ral_e+\lal\wt g_D-[T^\sigma p^E],\{\bl v_b\cdot\bl\n_h\}\ral_e\bigg).
  \end{align*} 
 \end{lem}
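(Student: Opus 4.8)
The plan is to obtain (\ref{consistency_err}) by a direct algebraic rearrangement driven by the discrete equations (\ref{Ph}). By the bilinearity of $\A$ in (\ref{A}) and the definitions $\bl e_u=\bl Q_h\bl u^E-\bl u_h$, $e_p=\bbQ_h p^E-p_h$,
\begin{equation*}
\A(\{\bl e_u,e_p\},\{\bl v,q\})=\A(\{\bl Q_h\bl u^E,\bbQ_h p^E\},\{\bl v,q\})-\A(\{\bl u_h,p_h\},\{\bl v,q\}).
\end{equation*}
Adding the two lines of (\ref{Ph}) tested against $\bl v$ and $q$ gives $\A(\{\bl u_h,p_h\},\{\bl v,q\})=\ell(\bl v)-(f^E,q)_{\T_h}$, so the whole task reduces to expanding $\A(\{\bl Q_h\bl u^E,\bbQ_h p^E\},\{\bl v,q\})-\ell(\bl v)+(f^E,q)_{\T_h}$ and sorting every resulting term into one of $E_u$, $s_h(\bl Q_h\bl u^E,\bl v)$, $E_p$, $E_R$.

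First I would dispose of the volumetric and penalty contributions. Splitting $a_h=a_0+s_h$ peels off $s_h(\bl Q_h\bl u^E,\bl v)$ verbatim. For the divergence terms I use the commutative property (\ref{comm_diag_property}), $\nabla_w\cdot\bl Q_h\bl u^E=\pi_h(\nabla\cdot\bl u^E)$; since $\nabla_w\cdot\bl v\in P_\beta$ and $q\in P_\sigma\subseteq P_\beta$, self-adjointness of $\pi_h$ removes the projection, yielding $(\nabla_w\cdot\bl Q_h\bl u^E,\nabla_w\cdot\bl v)_{\T_h}-(f^E,\nabla_w\cdot\bl v)_{\T_h}=(\nabla\cdot\bl u^E-f^E,\nabla_w\cdot\bl v)_{\T_h}$ and $b_{h0}(\bl Q_h\bl u^E,q)+(f^E,q)_{\T_h}=-(\nabla\cdot\bl u^E-f^E,q)_{\T_h}$, the two divergence residuals in $E_R$. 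The terms $(\kappa^{-1}\bl Q_0\bl u^E,\bl v_0)_{\T_h}$ and the penalty are each split by inserting $\pm\bl u^E$: the projection-error halves assemble $E_u$, the piece $(\kappa^{-1}\bl u^E,\bl v_0)_{\T_h}$ is held to be joined with $(\nabla p^E,\bl v_0)_{\T_h}$ below, and the penalty half $\sum_e h_K^{-1}\lal[T^\alpha\bl u^E]\cdot\wt{\bl\n},[T^\alpha\bl v_0]\cdot\wt{\bl\n}\ral_e$ combines with the $-\wt g_N$ coming from $-\ell(\bl v)$ to form the Neumann residual of $E_R$.

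The heart of the argument is the coupling term $b_{h1}(\bl v,\bbQ_h p^E)$. Since $\bbQ_h p^E\in P_\sigma\subseteq P_\beta$, the weak-divergence definition (\ref{weak_div}) applies, and a classical integration by parts on the resulting $(\bl v_0,\nabla\bbQ_h p^E)_K$ together with $\nabla\cdot\bl v_0\in P_{\beta-1}\subseteq P_\sigma$ (so that $\bbQ_h$ may be dropped against $\nabla\cdot\bl v_0$) gives the clean element identity
\begin{equation*}
-(\nabla_w\cdot\bl v,\bbQ_h p^E)_K=(\bl v_0,\nabla p^E)_K+\lal(\bl v_0-\bl v_b)\cdot\bl\n_K,\bbQ_h p^E-p^E\ral_{\pt K}-\lal\bl v_b\cdot\bl\n_K,p^E\ral_{\pt K}.
\end{equation*}
Summed over $K$, the first term completes the Darcy residual $(\kappa^{-1}\bl u^E+\nabla p^E,\bl v_0)_{\T_h}$ in $E_R$, the second yields the first term of $E_p$, and in the third all interior non-interface edges cancel by continuity of $p^E$ and single-valuedness of $\bl v_b$, while $\bl v_b|_{\pt\Omega_h}=0$; only the interface edges $\E_h^\Gamma$ survive.

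The remaining step, which I expect to be the main obstacle, is the interface bookkeeping. On each $e\in\E_h^\Gamma$ I rewrite the surviving boundary term via $a_1b_1-a_2b_2=[a]\{b\}+\{a\}[b]$ in terms of $[\bl v_b\cdot\bl\n_h]$, $\{\bl v_b\cdot\bl\n_h\}$ and $[p^E]$, $\{p^E\}$, then add the explicit interface terms of $b_{h1}$ and the $\sum_e\lal\wt g_D,\{\bl v_b\cdot\bl\n_h\}\ral_e$ produced by $-\ell(\bl v)$. The $[\bl v_b\cdot\bl\n_h]$-contributions collapse at once to the second term of $E_p$. For the $\{\bl v_b\cdot\bl\n_h\}$-contributions the decomposition $T^\sigma=\mathrm{id}+T_1^\sigma$ from (\ref{T1}) is the key device: writing $[T^\sigma p^E]=[p^E]+[T_1^\sigma p^E]$, the physical jump $[p^E]$ reproduces the integration-by-parts remainder and the $[T_1^\sigma p^E]$ piece converts $[T_1^\sigma\bbQ_h p^E]$ into $[T_1^\sigma(\bbQ_h p^E-p^E)]$, so that exactly the third term of $E_p$ and the Dirichlet residual $\sum_e\lal\wt g_D-[T^\sigma p^E],\{\bl v_b\cdot\bl\n_h\}\ral_e$ of $E_R$ remain. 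The difficulty is entirely in tracking signs and the $\bl\n_K$-versus-$\bl\n_h$ orientation so that these extraneous pressure jumps cancel cleanly; the volumetric identities are routine once the degrees $\sigma\le\beta=\alpha$ are kept in view.
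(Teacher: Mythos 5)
Your proposal is correct and follows essentially the same route as the paper: split $\A$ by bilinearity, use the discrete equations to replace $\A(\{\bl u_h,p_h\},\cdot)$, apply the commutative property (\ref{comm_diag_property}) for the divergence terms, insert $\pm\bl u^E$ to form $E_u$, expand $b_{h1}(\bl v,\bbQ_h p^E)$ via the element identity for $-(\nabla_w\cdot\bl v,\bbQ_h p^E)_K$, and sort the interface edge terms with the jump/average identity and the decomposition $[T^\sigma p^E]=[p^E]+[T_1^\sigma p^E]$. The only cosmetic difference is that you derive the element identity directly from (\ref{weak_div}) and integration by parts (using $\sigma\geq\beta-1$), whereas the paper cites it as (A.2) of an earlier reference.
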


 \begin{proof}
 From the definition (\ref{A}), one obtains
 $$\A(\{\bl e_u,e_p\},\{\bl v,q\})=\A(\{\bl Q_h \bl u^E,\bbQ_h p^E\},\{\bl v,q\})-\A(\{\bl u_h,p_h\},\{\bl v,q\}).$$
 We first deduce the first term on the right side of the equation.
 Applying the commutative property (\ref{comm_diag_property}), it holds
  $$b_{h0}(\bl Q_h\bl u^E,q)=-(\nabla_w\cdot\bl Q_h\bl u^E,q)_{\T_h}=-(\pi_h(\nabla\cdot\bl u^E),q)_{\T_h}=-(\nabla\cdot\bl u^E,q)_{\T_h}.$$  
  Again using the commutative property (\ref{comm_diag_property}) and $\nabla_w\cdot \bl u^E = \nabla\cdot\bl u^E$, by subtracting and adding $\bl u^E$, one gets
  \begin{align*}
 a_0(\bl Q_h\bl u^E,\bl v)&=(\kappa^{-1} (\bl Q_0\bl u^E-\bl u^E),\bl v_0)_{\T_h}+(\kappa^{-1} \bl u^E,\bl v_0)_{\T_h}+(\nabla\cdot \bl u^E,\nabla_w\cdot\bl v)_{\T_h}\\
 &\quad+\sum_{e\in\E_h^\Gamma}h_K^{-1}\lal[T^\alpha (\bl Q_0 \bl u^E-\bl u^E)]\cdot\wt{\bl\n},[T^\alpha\bl v_0]\cdot\wt{\bl\n}\ral_e
 +\sum_{e\in\E_h^\Gamma}h_K^{-1}\lal[T^\alpha \bl u^E]\cdot\wt{\bl\n},[T^\alpha\bl v_0]\cdot\wt{\bl\n}\ral_e.
  \end{align*} 
 From (A.2) in \cite{Yiliu_WG}, one has
 \begin{align*}
  -(\nabla_w\cdot\bl v,\bbQ_hp^E)_K&=(\bl v_0,\nabla p^E)_K-\lal\bl v_0\cdot\bl\n_h,p^E\ral_{\pt K}+\lal(\bl v_0-\bl v_b)\cdot\bl\n_h,\bbQ_hp^E\ral_{\pt K}\\
&=(\bl v_0,\nabla p^E)_K+\lal(\bl v_0-\bl v_b)\cdot\bl\n_h,\bbQ_hp^E-p^E\ral_{\pt K}-\lal\bl v_b\cdot\bl\n_h,p^E\ral_{\pt K}.
\end{align*} 
 then, by the definitions of $T^\sigma$, the term $b_{h1}(\bl v,\bbQ_hp^E)$ can be written 
 \begin{align*}
  b_{h1}(\bl v,\bbQ_hp^E)&=-(\nabla_w\cdot\bl v,\bbQ_hp^E)_{\T_h}+\sum_{e\in\E_h^{\Gamma}}\lal[\bl v_b\cdot\bl\n_h],\{\bbQ_hp^E\}\ral_e
  -\sum_{e\in\E_h^\Gamma}\lal\{\bl v_b\cdot\bl\n_h\},[T_1^\sigma\bbQ_hp^E]\ral_e\\
  &=(\bl v_0,\nabla p^E)_{\T_h}+\sum_{K\in\T_h}\lal(\bl v_0-\bl v_b)\cdot\bl\n_h,\bbQ_h p^E-p^E\ral_{\pt K}-\sum_{K\in\T_h}\lal\bl v_b\cdot\bl\n_h,p^E\ral_{\pt K}\\
  &\quad+\sum_{e\in\E_h^\Gamma}\lal[\bl v_b\cdot\bl\n_h],\{\bbQ_hp^E\}\ral_e-\sum_{e\in\E_h^\Gamma}\lal\{\bl v_b\cdot\bl\n_h\},[T_1^\sigma\bbQ_hp^E]\ral_e\\
  &=(\bl v_0,\nabla p^E)_{\T_h}+\sum_{K\in\T_h}\lal(\bl v_0-\bl v_b)\cdot\bl\n_h,\bbQ_h p^E-p^E\ral_{\pt K}-\sum_{e\in\E_h^\Gamma}\lal\{\bl v_b\cdot\bl\n_h\},[p^E]\ral_e\\
  &\quad+\sum_{e\in\E_h^\Gamma}\lal[\bl v_b\cdot\bl\n_h],\{\bbQ_hp^E-p^E\}\ral_e-\sum_{e\in\E_h^\Gamma}\lal\{\bl v_b\cdot\bl\n_h\},[T_1^\sigma\bbQ_hp^E]\ral_e\\
  &=(\bl v_0,\nabla p^E)_{\T_h}+\sum_{K\in\T_h}\lal(\bl v_0-\bl v_b)\cdot\bl\n_h,\bbQ_hp^E-p^E\ral_{\pt K}\\
  &\quad+\sum_{e\in\E_h^\Gamma}\lal[\bl v_b\cdot\bl\n_h],\{\bbQ_hp^E-p^E\}\ral_e
  -\sum_{e\in\E_h^\Gamma}\lal\{\bl v_b\cdot\bl\n_h\},[T^\sigma p^E]\ral_e\\
   &\quad-\sum_{e\in\E_h^\Gamma}\lal\{\bl v_b\cdot\bl\n_h\},[T_1^\sigma(\bbQ_hp^E-p^E)]\ral_e,
\end{align*} 
where we have used the fact that
  $\sum_{K\in\T_h}\lal\bl v_b\cdot\bl\n_h,p^E\ral_{\pt K}=\sum_{e\in\E_h^\Gamma}\lal\{\bl v_b\cdot\bl\n_h\},[p^E]\ral_e+\sum_{e\in\E_h^\Gamma}\lal[\bl v_b\cdot\bl\n_h],\{p^E\}\ral_e.$ 
  Thus, combining the above we obtain $\A(\{\bl Q_h \bl u^E,\bbQ_h p^E\},\{\bl v,q\})$,
  and then, subtracting the discrete problem (\ref{Ph}), this lemma is proved.
$\hfill\Box$\end{proof}

 Next, we shall first derive the bounds for the right-hand side of (\ref{consistency_err}).
 \begin{lem}\label{lem:err_sh}
  Let $\bl u\in\bl H^{r+1}(\Omega_1\cup \Omega_2)\cap \bl H_0(\mathrm{div},\Omega),\, 0\leq r\leq \alpha$ 
  be the solution of the problem (\ref{primal_problem_interface}), then
  \begin{align*}
  |E_u|&\lesssim h^r\|\bl u\|_{r+1,\Omega_1\cup \Omega_2}\|\bl v\|_{0,h},\qquad\forall \bl v\in V_{h0},\\
    |s_h(\bl Q_h\bl u^E,\bl v)|&\lesssim h^r\|\bl u\|_{r+1,\Omega_1\cup \Omega_2}\|\bl v\|_{0,h},\qquad\forall \bl v\in V_{h0}.
  \end{align*} 
 \end{lem}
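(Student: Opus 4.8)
The plan is to bound both quantities by the Cauchy--Schwarz inequality, arranging each product so that the factor depending on $\bl v$ is a piece of the energy norm $\|\bl v\|_{0,h}$, while the factor depending on $\bl u$ is controlled by the approximation properties of the $L^2$ projections through the Bramble--Hilbert Lemma (Lemma \ref{lem:BH}) and the trace inequality (Lemma \ref{lem:TRACE}). Throughout I abbreviate $\bl\theta:=\bl Q_0\bl u^E-\bl u^E$ and freely use $|\bl u^E|_{r+1,\T_h}\lesssim\|\bl u\|_{r+1,\Omega_1\cup\Omega_2}$ from the extension bound (\ref{extension}), together with the standard estimates $\|\bl\theta\|_{0,K}\lesssim h_K^{r+1}|\bl u^E|_{r+1,K}$ and $|\bl\theta|_{j,K}\lesssim h_K^{r+1-j}|\bl u^E|_{r+1,K}$.

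For the stabilization term I would first write $s_h(\bl Q_h\bl u^E,\bl v)$ as a sum over $\pt K$ and apply Cauchy--Schwarz elementwise, so that the $\bl v$-factor is exactly $s_h(\bl v,\bl v)^{1/2}\le\|\bl v\|_{0,h}$. For the $\bl u$-factor I would use the projection identity $(\bl Q_0\bl u^E-\bl Q_b\bl u^E)\cdot\bl\n_h=Q_b\big((\bl Q_0\bl u^E-\bl u^E)\cdot\bl\n_h\big)$ on each edge (valid because $\beta=\alpha$ makes the trace of $\bl Q_0\bl u^E$ reproducible by $Q_b$, exactly as in the proof of Lemma \ref{lem:inf_sup_b0}), together with the $L^2$-stability of $Q_b$, to reduce it to $h_K^{-1}\|\bl\theta\|_{0,\pt K}^2$. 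Then the trace inequality followed by Bramble--Hilbert gives $h_K^{-1}\|\bl\theta\|_{0,\pt K}^2\lesssim h_K^{2r}|\bl u^E|_{r+1,K}^2$; summing over $K$ and taking the square root yields the claimed factor $h^r\|\bl u\|_{r+1,\Omega_1\cup\Omega_2}$.

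For $E_u$ I would treat its two terms separately. The volume term $(\kappa^{-1}\bl\theta,\bl v_0)_{\T_h}$ is handled by Cauchy--Schwarz and $\|\bl\theta\|_{0,\T_h}\lesssim h^{r+1}|\bl u^E|_{r+1,\T_h}$, with $\|\bl v_0\|_{0,\T_h}\lesssim\|\bl v\|_{0,h}$; this is even of order $h^{r+1}$. For the penalty term I apply Cauchy--Schwarz over $\E_h^\Gamma$; the $\bl v$-factor $\big(\sum_{e\in\E_h^\Gamma} h_K^{-1}\|[T^\alpha\bl v_0]\cdot\wt{\bl\n}\|_{0,e}^2\big)^{1/2}$ is again a piece of $\|\bl v\|_{0,h}$, and it remains to show $\big(\sum_{e\in\E_h^\Gamma} h_K^{-1}\|[T^\alpha\bl\theta]\cdot\wt{\bl\n}\|_{0,e}^2\big)^{1/2}\lesssim h^r\|\bl u\|_{r+1,\Omega_1\cup\Omega_2}$.

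The main obstacle is precisely this last Taylor estimate, since $T^\alpha$ carries directional derivatives up to order $\alpha$. I would split $T^\alpha\bl\theta=\bl\theta+T_1^\alpha\bl\theta$. The zeroth-order piece contributes the jump $[\bl\theta]\cdot\wt{\bl\n}$, whose two one-sided traces are bounded, via Lemmas \ref{lem:TRACE} and \ref{lem:BH}, by $h_K^{r+1/2}|\bl u^E|_{r+1,K}$, giving the sharp $h^r$ order after the $h_K^{-1}$ weight --- this is where the stated rate originates. For the correction piece $T_1^\alpha\bl\theta=\sum_{j=1}^\alpha\frac{\delta_h^j}{j!}\partial_{\bl\nu_h}^j\bl\theta$ I would use the smallness $\delta\lesssim h^2$ from (\ref{Inequality:delta}): each term obeys $\delta^j\|\partial_{\bl\nu_h}^j\bl\theta\|_{0,e}\lesssim h^{2j}\big(h_K^{-1/2}|\bl\theta|_{j,K}+h_K^{1/2}|\bl\theta|_{j+1,K}\big)$ by the trace inequality, and Bramble--Hilbert then makes every such term of order $h^{r+1}$ or higher, hence negligible compared with $h^r$. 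I expect the delicate points to be keeping track of which estimate applies to the polynomial part $\bl Q_0\bl u^E$ (where Lemma \ref{lem:T1_bounded} is available) versus the extended exact part $\bl u^E$ (where one relies on its regularity together with $\delta\lesssim h^2$), and, when $r<\alpha$, controlling the highest-order directional derivatives $\partial_{\bl\nu_h}^j\bl\theta$ uniformly across interface elements under the shape-regularity assumption.
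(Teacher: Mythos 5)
Your proposal is correct and follows essentially the same route as the paper: Cauchy--Schwarz against the pieces of $\|\bl v\|_{0,h}$, the identity $(\bl Q_0\bl u^E-\bl Q_b\bl u^E)\cdot\bl\n_h=\bl Q_b(\bl Q_0\bl u^E-\bl u^E)\cdot\bl\n_h$ with $L^2$-stability of $Q_b$ for the stabilization term, and for the penalty term the expansion of $T^\alpha$ with the weights $\delta^{j}\lesssim h^{2j}$ combined with the trace inequality and the projection/Bramble--Hilbert estimates, the $j=0$ edge term being the one that produces the $h^r$ rate. Your explicit split $T^\alpha\bl\theta=\bl\theta+T_1^\alpha\bl\theta$ is just a slightly more detailed writing of the paper's sum $\sum_{j=0}^{\alpha}\delta^{2j}h^{-1}|\bl Q_0\bl u^E-\bl u^E|_{j,e}^2$.
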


 \begin{proof}
 By the definition of $T^\alpha$, using the properties of $L^2$ projection and the trace inequality in Lemma \ref{lem:TRACE}, one gets
  \begin{align*}
    |E_u|&\leq \bigg(\|\kappa^{-1}(\bl Q_0\bl u^E-\bl u^E)\|_{0,\T_h}^2+\sum_{e\in\E_h^\Gamma}h^{-1}\|[T^\alpha(\bl Q_0\bl u^E-\bl u^E)]\|_{0,e}^2\bigg)^{1/2}\|\bl v\|_{0,h}\\
    &\lesssim \bigg(h^{2(r+1)}|\bl u^E|_{r+1,\T_h}^2+\sum_{e\in\E_h^\Gamma}h^{-1}\|[T^\alpha(\bl Q_0\bl u^E-\bl u^E)]\|_{0,e}^2\bigg)^{1/2}\|\bl v\|_{0,h}\\
   &\lesssim h^{r+1}|\bl u^E|_{r+1,\T_h}\|\bl v\|_{0,h}+\bigg(\sum_{e\in\E_h^\Gamma}\sum_{j=0}^\alpha\delta^{2j}h^{-1}|\bl Q_0\bl u^E-\bl u^E|_{j,e}^2\bigg)^{1/2}\|\bl v\|_{0,h}\\
   &\lesssim h^{r}|\bl u^E|_{r+1,\T_h}\|\bl v\|_{0,h}\lesssim h^r\|\bl u\|_{r+1,\Omega_1\cup \Omega_2}\|\bl v\|_{0,h}.
  \end{align*} 
  Note that $(\bl Q_0\bl u^E\cdot\bl\n_h)|_e=(\bl Q_b(\bl Q_0\bl u^E))\cdot\bl\n_h$, using the trace inequality, and the properties of $L^2$ projection, one has
  \begin{align*}
    |s_h(\bl Q_h \bl u^E,\bl v)|&\leq\sum_{K\in\T_h}h_K^{-1}\|(\bl Q_0\bl u^E-\bl Q_b\bl u^E)\cdot\bl\n_h\|_{0,\pt K}\|(\bl v_0-\bl v_b)\cdot\bl\n_h\|_{0,\pt K}\\
  &\lesssim \bigg(\sum_{K\in\T_h}h_K^{-1}\|\bl Q_0\bl u^E-\bl u^E\|_{0,\pt K}^2\bigg)^{1/2}\|\bl v\|_{0,h}\\
  &\lesssim h^{r}|\bl u^E|_{r+1, \T_h}\|\bl v\|_{0,h}\lesssim h^r\|\bl u\|_{r+1,\Omega_1\cup \Omega_2}\|\bl v\|_{0,h}.
 \end{align*} 
 $\hfill\Box$\end{proof}

 \begin{lem}\label{lem:err_Ep}
  Assume that $p\in H^{t+1}(\Omega_1\cup \Omega_2)\cap L_0^2(\Omega), \,0\leq t\leq\sigma$, we have
  \begin{align*}
    |E_p|\lesssim h^{t+1}\|p\|_{t+1,\Omega_1\cup \Omega_2}\|\bl v\|_{0,h},\qquad\forall \bl v\in V_{h0}.
  \end{align*} 
 \end{lem}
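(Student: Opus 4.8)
The plan is to treat the three sums in $E_p$ separately, bounding each by $h^{t+1}\|p\|_{t+1,\Omega_1\cup\Omega_2}\|\bl v\|_{0,h}$. Throughout I abbreviate the pressure projection error by $\psi:=\bbQ_h p^E-p^E$; on each $K\in\T_h$ the quasi-optimality of the $L^2$ projection together with the Bramble--Hilbert Lemma \ref{lem:BH} gives $|\psi|_{m,K}\lesssim h_K^{t+1-m}|p^E|_{t+1,K}$ for $0\le m\le t+1$, and at the very end the extension estimate (\ref{extension}) turns $|p^E|_{t+1,\T_h}$ into $\|p\|_{t+1,\Omega_1\cup\Omega_2}$. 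In every sum the mechanism is the same Cauchy--Schwarz splitting: the factor built from $\bl v$ is weighted by a power of $h_K$ so that it is dominated by the energy norm, while the remaining factor built from $\psi$ collects the powers of $h$.

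For the first sum $\sum_{K\in\T_h}\lal(\bl v_0-\bl v_b)\cdot\bl\n_h,\psi\ral_{\pt K}$ I would insert $h_K^{-1/2}$ and $h_K^{1/2}$ and split off $(\sum_K h_K^{-1}\|(\bl v_0-\bl v_b)\cdot\bl\n_h\|_{0,\pt K}^2)^{1/2}$, which is precisely the stabilization part of $a_h$ and hence bounded by $\|\bl v\|_{0,h}$. For the companion factor the trace inequality (Lemma \ref{lem:TRACE}) and the projection bounds give $\|\psi\|_{0,\pt K}\lesssim h_K^{-1/2}\|\psi\|_{0,K}+h_K^{1/2}|\psi|_{1,K}\lesssim h_K^{t+1/2}|p^E|_{t+1,K}$, so that $(\sum_K h_K\|\psi\|_{0,\pt K}^2)^{1/2}\lesssim h^{t+1}|p^E|_{t+1,\T_h}$. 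The second sum $\sum_{e\in\E_h^\Gamma}\lal[\bl v_b\cdot\bl\n_h],\{\psi\}\ral_e$ is handled identically, now controlling $(\sum_e h_K^{-1}\|[\bl v_b\cdot\bl\n_h]\|_{0,e}^2)^{1/2}\lesssim\|\bl v\|_{0,h}$ by Lemma \ref{lem:vb_bound}, inequality (\ref{vb_jump_bound}), and estimating $\|\{\psi\}\|_{0,e}$ by the same trace-and-approximation bound.

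The genuine difficulty is the third sum $-\sum_{e\in\E_h^\Gamma}\lal\{\bl v_b\cdot\bl\n_h\},[T_1^\sigma\psi]\ral_e$, and here I expect the main obstacle to lie. This time I weight the other way: $(\sum_e h_K\|\{\bl v_b\cdot\bl\n_h\}\|_{0,e}^2)^{1/2}\lesssim\|\bl v\|_{0,h}$ by Lemma \ref{lem:vb_bound}, inequality (\ref{vb_bound}), and it remains to show $(\sum_e h_K^{-1}\|[T_1^\sigma\psi]\|_{0,e}^2)^{1/2}\lesssim h^{t+1}|p^E|_{t+1,\T_h}$. Expanding $T_1^\sigma\psi=\sum_{j=1}^\sigma\frac{\delta_h^j}{j!}\pt_{\bl\nu_h}^j\psi$ and using $\delta\lesssim h^2$ from (\ref{Inequality:delta}), each term carries $\delta_h^j\lesssim h^{2j}$; for $j\le t$ the trace inequality and the projection bounds give $\|\pt_{\bl\nu_h}^j\psi\|_{0,e}\lesssim h_K^{t+1/2-j}|p^E|_{t+1,K}$, so the $j$-th term contributes at most $h^{2j}h_K^{t+1/2-j}\lesssim h^{t+1/2+j}|p^E|_{t+1,K}$, which after the outer $h_K^{-1/2}$ weight becomes $h^{t+j}|p^E|_{t+1,K}$. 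The dominant $j=1$ term already produces the claimed $h^{t+1}$ and all higher-order terms are strictly smaller.

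The subtlety that makes this the hard part is the mismatch between the polynomial degree $\sigma$ and the regularity $p\in H^{t+1}$: when $j>t+1$ the derivative $\pt_{\bl\nu_h}^j\psi$ cannot be bounded through $|p^E|_{t+1}$ at all, and when $j=t+1$ it lives only in $L^2(K)$, so the naive trace inequality is unavailable. For these high-order terms I would split $\pt_{\bl\nu_h}^j\psi=\pt_{\bl\nu_h}^j\bbQ_h p^E-\pt_{\bl\nu_h}^j p^E$, bound the polynomial part by the inverse inequality (Lemma \ref{lem:inverse}) and the stability of the projection in the top seminorm, and rely on Lemma \ref{lem:T1_bounded}, whose estimates are tailored to $T_1^\sigma$ acting on polynomials, together with the strong decay $\delta_h^j\lesssim h^{2j}$ to render these contributions negligible. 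Once the third sum is controlled, adding the three estimates and invoking the extension bound (\ref{extension}) to replace $|p^E|_{t+1,\T_h}$ by $\|p\|_{t+1,\Omega_1\cup\Omega_2}$ finishes the proof.
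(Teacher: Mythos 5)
Your argument is essentially the paper's own proof: the same Cauchy--Schwarz splitting of the three sums with $h^{\pm 1/2}$ weights, Lemma \ref{lem:vb_bound} for the $\bl v$-factors, and the trace inequality plus $L^2$-projection approximation (with $\delta\lesssim h^2$ absorbing the Taylor expansion of $T_1^\sigma$) for the projection-error factors. If anything you are more careful than the paper, which bounds $\sum_{e}\sum_{j=1}^\sigma\delta^{2j}h^{-1}|[\bbQ_h p^E-p^E]|_{j,e}^2$ by $h^{2(t+1)}|p^E|_{t+1,\T_h}^2$ without comment on the $j>t$ terms you flag (whose non-polynomial part $\partial_{\bl\nu_h}^j p^E$ in fact requires the extra regularity that is only assumed later, in Theorem \ref{thm:energy_err}).
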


 \begin{proof}
  By Lemmas \ref{lem:vb_bound},\,\ref{lem:TRACE}, and the approximation property of $L^2$ projection, it has
  \begin{align*}
    |E_p|&\leq \sum_{K\in\T_h}\|(\bl v_0-\bl v_b)\cdot\bl\n_h\|_{0,\pt K}\|\bbQ_h p^E-p^E\|_{0,\pt K}\\
    &\quad+\sum_{e\in\E_h^\Gamma}h^{-1/2}\|[\bl v_b\cdot\bl\n_h]\|_{0,e}h^{1/2}\|\{\bbQ_h p^E-p^E\}\|_{0,e}\\
    &\quad +\sum_{e\in\E_h^\Gamma}h^{1/2}\|\{\bl v_b\cdot\bl\n_h\}\|_{0,e}h^{-1/2}\|[T_1^\sigma(\bbQ_h p^E-p^E)]\|_{0,e}\\
    &\lesssim \bigg(\sum_{K\in\T_h}h\|\bbQ_h p^E-p^E\|_{0,\pt K}^2+\sum_{e\in\E_h^\Gamma}h\|\{\bbQ_h p^E-p^E\}\|_{0,e}^2\\
&\qquad\qquad\qquad+\sum_{e\in\E_h^\Gamma}\sum_{j=1}^\sigma\delta^{2j}h^{-1}|[\bbQ_h p^E-p^E]|_{j,e}^2\bigg)^{1/2}\|\bl v\|_{0,h}\\
    &\lesssim h^{t+1}|p^E|_{t+1,\T_h}\|\bl v\|_{0,h}\lesssim h^{t+1}\|p\|_{t+1,\Omega_1\cup \Omega_2}\|\bl v\|_{0,h}.
  \end{align*} 
 $\hfill\Box$\end{proof}
 
\begin{lem}\label{lem:ER_1}
  Assume that $\bl u \in \bl H^{\max\{l+1,\alpha+2\}}(\Omega_1\cup \Omega_2),\,p\in H^{\max\{l+1,\sigma+2\}}(\Omega_1\cup \Omega_2)$ and $f\in H^l(\Omega_1\cup \Omega_2)$ satisfy Equation (\ref{primal_problem_interface}), then
  \begin{align}
  \|\kappa^{-1}\bl u^E+\nabla p^E\|_{0,\T_h}&\lesssim \delta^{l}\sum_{i=1}^2\|D^l(\kappa^{-1}\bl u^E+\nabla p^E)\|_{0,\Omega_{h,i}\backslash\Omega_i},\label{R1}\\
  \|\nabla\cdot\bl u^E-f^E\|_{0,\Omega_h}&\lesssim\delta^{l}\sum_{i=1}^2\|D^l(\nabla\cdot\bl u^E-f^E)\|_{0,\Omega_{h,i}\backslash\Omega_i},\label{R2}\\
  \sum_{e\in\E_h^\Gamma}h_K^{-1}\|[T^\alpha \bl u^E]\cdot\wt{\bl\n}-\wt g_N\|_{0,e}^2&\lesssim \delta^{2\alpha+2}h^{-1}\|D^{\alpha+2}\bl u\|_{0,\Omega_1\cup\Omega_2}^2,\label{R3}\\
  \sum_{e\in\E_h^\Gamma}h_K^{-1}\|\wt g_D-[T^\sigma p^E]\|_{0,e}^2&\lesssim \delta^{2\sigma+2}h^{-1}\|D^{\sigma+2}p\|_{0,\Omega_1\cup\Omega_2}^2.\label{R4}
\end{align}
\end{lem}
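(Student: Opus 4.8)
The plan is to handle the two volume residuals (\ref{R1})--(\ref{R2}) and the two interface residuals (\ref{R3})--(\ref{R4}) separately, in both cases exploiting that the strong form (\ref{primal_problem_interface}) holds only on $\Omega_1\cup\Omega_2$ while the geometric mismatch is confined to the crescents $\Omega_h^e$ of width $\delta\lesssim h^2$.

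For (\ref{R1}) and (\ref{R2}) I would begin by noting that $\kappa^{-1}\bl u+\nabla p=0$ and $\nabla\cdot\bl u-f=0$ hold pointwise on each $\Omega_i$, where $\bl u^E=\bl u$ and $p^E=p$; hence both integrands vanish identically on $\Omega_1$ and $\Omega_2$, so the $\T_h$-integral collapses to the crescent integral $\sum_i\|\cdot\|_{0,\Omega_{h,i}\backslash\Omega_i}$ already appearing on the right. Because each integrand belongs to $H^l$ and is identically zero on the open set $\Omega_i$, all of its derivatives of order $\le l-1$ have vanishing trace on $\Gamma$; an $l$-fold normal integration across the crescent (the same mechanism, in the spirit of Lemma \ref{lem:B_K1994}, iterated) then produces exactly the factor $\delta^l$ against $\|D^l(\cdot)\|_{0,\Omega_{h,i}\backslash\Omega_i}$, which is (\ref{R1})--(\ref{R2}).

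For (\ref{R3}) and (\ref{R4}) the first move is to turn the corrected data into Taylor remainders. Since $\bl u_i^E|_\Gamma=\bl u_i$ and $p_i^E|_\Gamma=p_i$, the pullbacks satisfy $[\wt{\bl u^E}]\cdot\wt{\bl\n}=([\bl u\cdot\bl\n])\circ\rho=\wt g_N$ and $[\wt{p^E}]=[p]\circ\rho=\wt g_D$ exactly; subtracting these and using (\ref{T-R}) in the form $T^mv-\wt v=-R^mv$ gives
\[
[T^\alpha\bl u^E]\cdot\wt{\bl\n}-\wt g_N=-[R^\alpha\bl u^E]\cdot\wt{\bl\n},\qquad \wt g_D-[T^\sigma p^E]=[R^\sigma p^E].
\]
So the two residuals reduce to $L^2(e)$-norms of jumps of Taylor remainders. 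For each remainder I would use its integral form together with Cauchy--Schwarz over the normal segment of length $\le\delta$ to obtain the edge bound $\|R^\alpha\bl u^E\|_{0,e}^2\lesssim\delta^{2\alpha+1}\|D^{\alpha+1}\bl u^E\|_{0,\Omega_h^e}^2$ (and likewise with $\sigma,p$), after which one sums over $e\in\E_h^\Gamma$.

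The delicate point — and the step that yields the sharp exponents $\delta^{2\alpha+2}$, $D^{\alpha+2}$ rather than the naive $\delta^{2\alpha+1}$, $D^{\alpha+1}$ — is the treatment of the crescent seminorm $\|D^{\alpha+1}\bl u^E\|_{0,\Omega_h^e}$. Bounding it directly by $\|D^{\alpha+1}\bl u\|_{0,\Omega_1\cup\Omega_2}$ loses an order; instead I would integrate across the width-$\delta$ crescent by the fundamental theorem of calculus to get $\|D^{\alpha+1}\bl u^E\|_{0,\Omega_h^e}^2\lesssim\delta\,\|D^{\alpha+1}\bl u\|_{0,\wt e}^2+\delta^2\|D^{\alpha+2}\bl u^E\|_{0,\Omega_h^e}^2$, thereby trading one power of $\delta$ for the trace of $D^{\alpha+1}\bl u$ on $\Gamma$. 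Summing the trace over $\wt e$ covering $\Gamma$ and invoking the trace theorem on the fixed smooth subdomains gives $\sum_e\|D^{\alpha+1}\bl u\|_{0,\wt e}^2\lesssim\|\bl u\|_{\alpha+2,\Omega_1\cup\Omega_2}^2$, which is where the extra derivative (hence the hypotheses $\bl u\in\bl H^{\alpha+2}$, $p\in H^{\sigma+2}$) enters; the residual $\delta^2$-term is absorbed by the extension stability (\ref{extension}). Combining, $\sum_e h^{-1}\|R^\alpha\bl u^E\|_{0,e}^2\lesssim h^{-1}\delta^{2\alpha+1}\cdot\delta\,\|\bl u\|_{\alpha+2,\Omega_1\cup\Omega_2}^2$, which is (\ref{R3}); (\ref{R4}) is identical with $(\alpha,\bl u,g_N)$ replaced by $(\sigma,p,g_D)$. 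The main obstacle is precisely recognising that optimality forces this $\delta$-for-derivative exchange through a boundary trace rather than a crude interior bound; the remaining technicalities — the $O(\delta)$ discrepancy between $\wt{\bl\n}$ and the edge geometry, and the Jacobian between $e$ and $\wt e$ — contribute only harmless lower-order factors by (\ref{Inequality:delta}).
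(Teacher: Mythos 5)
Your argument is correct, but it is worth noting that the paper does not actually prove this lemma: its ``proof'' consists of citing inequalities (3.12) and (3.14) of Burman--Hansbo--Larson (the reference \cite{Burman_boundary_value_correction_2018}) together with the extension stability (\ref{extension}). What you have written is, in effect, a self-contained derivation of those cited inequalities, and the mechanisms you identify are exactly the right ones: for (\ref{R1})--(\ref{R2}), the residuals $\kappa^{-1}\bl u^E+\nabla p^E$ and $\nabla\cdot\bl u^E-f^E$ vanish identically on $\Omega_i$ (where $\bl u^E=\bl u$, $p^E=p$ and the strong equations hold), so they live only on the crescents, have vanishing traces of all derivatives up to order $l-1$ on $\Gamma$, and an $l$-fold normal integration across the width-$\delta$ strip yields the factor $\delta^l$; for (\ref{R3})--(\ref{R4}), the identities $[\wt{\bl u^E}]\cdot\wt{\bl\n}=\wt g_N$ and $[\wt{p^E}]=\wt g_D$ convert the corrected data into Taylor remainders via (\ref{T-R}), the integral form of the remainder with Cauchy--Schwarz along the normal segment gives $\delta^{2\alpha+1}\|D^{\alpha+1}\bl u^E\|_{0,\Omega_h^e}^2$, and the final power of $\delta$ is recovered by the narrow-strip estimate that trades the crescent $L^2$-norm of $D^{\alpha+1}\bl u^E$ for $\delta$ times its trace on $\Gamma$ plus a $\delta^2\|D^{\alpha+2}\cdot\|^2$ term. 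This last exchange is indeed the non-obvious step, and you execute it correctly. The only caveat is cosmetic: your trace step produces the full norm $\|\bl u\|_{\alpha+2,\Omega_1\cup\Omega_2}$ (the one-sided trace of $D^{\alpha+1}\bl u$ on $\Gamma$ is controlled by $\|D^{\alpha+1}\bl u\|_{0,\Omega_i}+\|D^{\alpha+2}\bl u\|_{0,\Omega_i}$, not by the top seminorm alone), whereas the lemma as stated displays only $\|D^{\alpha+2}\bl u\|_{0,\Omega_1\cup\Omega_2}$ on the right of (\ref{R3})--(\ref{R4}); this weakening is harmless, since $\delta\lesssim h^2$ makes these terms superconvergent in Theorem \ref{thm:energy_err} either way.
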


\begin{proof}
Inequalities (\ref{R1}) and (\ref{R2}) follow immediately from (3.14) in \cite{Burman_boundary_value_correction_2018}. The estimates of the residual (\ref{R3}) and (\ref{R4}) follow from (3.12) in \cite{Burman_boundary_value_correction_2018} and (\ref{extension}).
$\hfill\Box$\end{proof}

Combining (\ref{consistency_err}) and using Lemma \ref{lem:ER_1}, one gets the estimate of the estimate of $E_R$:
 \begin{lem}\label{lem:err_ER}
  Under the same assumption in Lemma \ref{lem:ER_1}, we have
  \begin{align*}
    |E_R|\lesssim &~\delta^{l}\sum_{i=1}^2\bigg(\|D^l(\kappa^{-1}\bl u^E+\nabla p^E)\|_{0,\Omega_{h,i}\backslash\Omega_i}^2
    +\|D^l(\nabla\cdot\bl u^E-f^E)\|_{0,\Omega_{h,i}\backslash \Omega_i}^2\bigg)^{1/2}\|\bl v_h,q\|_H\\
    &~+\bigg(\delta^{2(\alpha+1)}h^{-1}\|D^{\alpha+2}\bl u\|_{0,\Omega_1\cup\Omega_2}^2+\delta^{2(\sigma+1)}h^{-1}\|D^{\sigma+2}p\|_{0,\Omega_1\cup\Omega_2}^2\bigg)^{1/2}\|\bl v_h,q\|_H.
  \end{align*} 
\end{lem}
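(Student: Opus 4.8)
The plan is to split $E_R$ into its five constituent terms, apply the Cauchy--Schwarz inequality to each, absorb every factor depending on $\bl v$ or $q$ into the corresponding component of the energy norm $\|\bl v\|_{0,h}$ or into $\|q\|_{0,\Omega_h}$, and then control the remaining residual factors by the estimates (\ref{R1})--(\ref{R4}) already established in Lemma \ref{lem:ER_1}. Since $\|\bl v\|_{0,h}^2+\|q\|_{0,\Omega_h}^2=\|\bl v_h,q\|_H^2$, both $\|\bl v\|_{0,h}$ and $\|q\|_{0,\Omega_h}$ are bounded by $\|\bl v_h,q\|_H$, so at the end every term can be collected against the single factor $\|\bl v_h,q\|_H$.

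First I would treat the three volume terms, which produce the first line of the claimed bound. For $(\kappa^{-1}\bl u^E+\nabla p^E,\bl v_0)_{\T_h}$, Cauchy--Schwarz isolates the factor $\|\bl v_0\|_{0,\T_h}$; because $(\kappa^{-1}\bl v_0,\bl v_0)_{\T_h}$ is part of $a_0$ and $\kappa^{-1}$ is bounded below by a positive constant, one has $\|\bl v_0\|_{0,\T_h}\lesssim\|\bl v\|_{0,h}$, while the residual factor $\|\kappa^{-1}\bl u^E+\nabla p^E\|_{0,\T_h}$ is controlled by (\ref{R1}). For $(\nabla\cdot\bl u^E-f^E,\nabla_w\cdot\bl v)_{\T_h}$ the factor $\|\nabla_w\cdot\bl v\|_{0,\T_h}$ is directly part of $\|\bl v\|_{0,h}$ and the other factor is bounded by (\ref{R2}); the term $-(\nabla\cdot\bl u^E-f^E,q)_{\T_h}$ is handled identically, pairing (\ref{R2}) with $\|q\|_{0,\Omega_h}$.

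Next I would treat the two interface terms, which produce the second line of the bound. For the Neumann residual $\sum_{e\in\E_h^\Gamma}h^{-1}\lal[T^\alpha\bl u^E]\cdot\wt{\bl\n}-\wt g_N,[T^\alpha\bl v_0]\cdot\wt{\bl\n}\ral_e$ I split $h^{-1}=h^{-1/2}\cdot h^{-1/2}$ and apply a discrete Cauchy--Schwarz over $\E_h^\Gamma$, separating $\big(\sum_e h^{-1}\|[T^\alpha\bl u^E]\cdot\wt{\bl\n}-\wt g_N\|_{0,e}^2\big)^{1/2}$ from $\big(\sum_e h^{-1}\|[T^\alpha\bl v_0]\cdot\wt{\bl\n}\|_{0,e}^2\big)^{1/2}$; the latter is precisely the penalty part of $\|\bl v\|_{0,h}$, and the former is bounded by (\ref{R3}). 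For the Dirichlet residual $\sum_{e\in\E_h^\Gamma}\lal\wt g_D-[T^\sigma p^E],\{\bl v_b\cdot\bl\n_h\}\ral_e$ I pair $\big(\sum_e h^{-1}\|\wt g_D-[T^\sigma p^E]\|_{0,e}^2\big)^{1/2}$, estimated by (\ref{R4}), with $\big(\sum_e h\|\{\bl v_b\cdot\bl\n_h\}\|_{0,e}^2\big)^{1/2}$, which is bounded by $\|\bl v\|_{0,h}$ through (\ref{vb_bound}) of Lemma \ref{lem:vb_bound}.

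Finally I would collect the five bounds, replace each occurrence of $\|\bl v\|_{0,h}$ and $\|q\|_{0,\Omega_h}$ by $\|\bl v_h,q\|_H$, and regroup the right-hand sides into the two stated groups, using $\delta^{2\alpha+2}=\delta^{2(\alpha+1)}$ and $\delta^{2\sigma+2}=\delta^{2(\sigma+1)}$. The argument is essentially bookkeeping: the only point requiring care is matching each $\bl v$- or $q$-dependent factor with the correct piece of the energy norm---in particular recognizing that the penalty term of $a_0$ controls the $[T^\alpha\bl v_0]\cdot\wt{\bl\n}$ factor and that Lemma \ref{lem:vb_bound} supplies the control of $\{\bl v_b\cdot\bl\n_h\}$---so that no genuinely new difficulty arises beyond assembling the estimates of Lemma \ref{lem:ER_1}.
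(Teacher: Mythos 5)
Your proposal is correct and fills in exactly the argument the paper intends: the paper itself gives no detailed proof of this lemma beyond the remark that it follows from Lemma \ref{lem:ER_1}, and your term-by-term Cauchy--Schwarz, matching each test-function factor to the corresponding piece of the energy norm (including the penalty term for $[T^\alpha\bl v_0]\cdot\wt{\bl\n}$ and Lemma \ref{lem:vb_bound} for $\{\bl v_b\cdot\bl\n_h\}$) and each residual factor to (\ref{R1})--(\ref{R4}), is precisely that argument.
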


Now, we are able to write the error estimate:
 \begin{thm}\label{thm:energy_err}
  Assume that $f\in H^{l}(\Omega_1\cup \Omega_2)$. Let $\bl u\in\bl H^{\max\{r+1,l+1,\alpha+2\}}(\Omega_1\cup \Omega_2)\cap \bl H_0(\mathrm{div},\Omega)$ 
  and $p\in H^{\max\{t+1,l+1,\sigma+2\}}(\Omega_1\cup \Omega_2)\cap L_0^2(\Omega)$ 
  be the solution of the problem (\ref{primal_problem_interface}), then
    \begin{align*}
      \|\bl e_u\|_{0,h}+\|e_p&\|_{0,\Omega_h}
      \lesssim h^{\min\{r,t+1,\alpha,\sigma+1\}}(\|\bl u\|_{r+1,\Omega_1\cup \Omega_2}+\|p\|_{t+1,\Omega_1\cup \Omega_2})\\
      &+\delta^{l}\sum_{i=1}^2\bigg(\|D^l(\kappa^{-1}\bl u^E+\nabla p^E)\|_{0,\Omega_{h,i}\backslash\Omega_i}^2+\|D^l(\nabla\cdot\bl u^E-f^E)\|_{0,\Omega_{h,i}\backslash\Omega_i}^2\bigg)^{1/2}\\
      &+\delta^{\alpha+1}h^{-1/2}\|D^{\alpha+2}\bl u\|_{0,\Omega_1\cup\Omega_2}+\delta^{\sigma+1}h^{-1/2}\|D^{\sigma+2}p\|_{0,\Omega_1\cup\Omega_2}.
    \end{align*} 
    \end{thm}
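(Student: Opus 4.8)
The plan is to combine the inf-sup stability of $\A$ established in the well-posedness theorem (\ref{A_h_bound}) with the error equation of Lemma \ref{lem:err_equation} and the termwise bounds of Lemmas \ref{lem:err_sh}--\ref{lem:err_ER}. Concretely, I would first apply the stability estimate (\ref{A_h_bound}) to the error pair $(\bl\sigma_h,\zeta_h)=(\bl e_u,e_p)\in V_{h0}\times\Psi_{h0}$ (recalling that $\A_h=\A$), which yields
$$\|\bl e_u,e_p\|_H\lesssim \sup_{(\bl v,q)\in V_{h0}\times\Psi_{h0}}\frac{\A(\{\bl e_u,e_p\},\{\bl v,q\})}{\|\bl v,q\|_H}.$$
This reduces the theorem to bounding the numerator $\A(\{\bl e_u,e_p\},\{\bl v,q\})$ uniformly against $\|\bl v,q\|_H$.

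Next I would substitute the error equation (\ref{consistency_err}), writing the numerator as $E_u+s_h(\bl Q_h\bl u^E,\bl v)+E_p+E_R$, and estimate the four contributions separately. Lemma \ref{lem:err_sh} controls $|E_u|$ and $|s_h(\bl Q_h\bl u^E,\bl v)|$ by $h^{r}\|\bl u\|_{r+1,\Omega_1\cup\Omega_2}\|\bl v\|_{0,h}$; Lemma \ref{lem:err_Ep} controls $|E_p|$ by $h^{t+1}\|p\|_{t+1,\Omega_1\cup\Omega_2}\|\bl v\|_{0,h}$; and Lemma \ref{lem:err_ER} controls $|E_R|$ by the geometric-error factor (the $\delta^{l}$ volume term together with the $\delta^{\alpha+1}h^{-1/2}$ and $\delta^{\sigma+1}h^{-1/2}$ interface terms) times $\|\bl v,q\|_H$. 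Since $\|\bl v\|_{0,h}\le\|\bl v,q\|_H$ by the definition of $\|\cdot\|_H$ in (\ref{A}), every contribution has the form (data-dependent quantity)$\times\|\bl v,q\|_H$, so after dividing by $\|\bl v,q\|_H$ and passing to the supremum the right-hand side becomes exactly the collection of error terms.

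Then I would consolidate the powers of $h$. The approximation contributions produce $h^{r}\|\bl u\|_{r+1,\Omega_1\cup\Omega_2}$ and $h^{t+1}\|p\|_{t+1,\Omega_1\cup\Omega_2}$; invoking the standing hypotheses $r\le\alpha$ and $t\le\sigma$ gives $\min\{r,t+1\}=\min\{r,t+1,\alpha,\sigma+1\}$, so these merge into the single prefactor $h^{\min\{r,t+1,\alpha,\sigma+1\}}(\|\bl u\|_{r+1,\Omega_1\cup\Omega_2}+\|p\|_{t+1,\Omega_1\cup\Omega_2})$ of the statement, while the $E_R$ bound is carried over verbatim as the remaining $\delta$-terms. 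Finally, since $\|\bl e_u,e_p\|_H=(\|\bl e_u\|_{0,h}^2+\|e_p\|_{0,\Omega_h}^2)^{1/2}$ is equivalent to $\|\bl e_u\|_{0,h}+\|e_p\|_{0,\Omega_h}$ up to a factor of $\sqrt 2$, this delivers the claimed estimate.

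The genuinely hard work sits in the ingredients I am invoking rather than in this assembly: the inf-sup condition for $b_{h1}$ (Lemma \ref{lem:inf_sup_b1}), whose proof must cope with the interface penalty terms and with the fact that a function in $\Psi_{h0}$ need not be mean-value free on each $\Omega_i$, and the consistency bound for $E_R$ (Lemma \ref{lem:err_ER}), which quantifies the geometric error of transferring the Neumann and Dirichlet interface data from $\Gamma$ to $\Gamma_h$ through the Taylor correction and of integrating over the crescent regions $\Omega_h^e$. Granting those lemmas, the only point requiring care in the present proof is the bookkeeping that keeps each bound measured against the single norm $\|\bl v,q\|_H$ before taking the supremum; no new estimate is needed, so I expect the main obstacle to have been resolved already in the preceding lemmas rather than here.
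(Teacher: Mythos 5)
Your proposal is correct and follows exactly the route the paper takes: the paper's own proof is a one-line appeal to the stability estimate (\ref{A_h_bound}), the error equation of Lemma \ref{lem:err_equation}, and the termwise bounds of Lemmas \ref{lem:err_sh}--\ref{lem:err_ER}, which is precisely the assembly you carry out in detail. Your additional bookkeeping (measuring each contribution against $\|\bl v,q\|_H$ and merging the $h$-powers via $r\le\alpha$, $t\le\sigma$) is the intended, if unwritten, content of that one-line proof.
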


   \begin{proof}
 This theorem can be directly derived from the mixed finite element theory, Inequality (\ref{A_h_bound}), Lemmas \ref{lem:err_equation}-\ref{lem:err_ER}.
   $\hfill\Box$\end{proof}

   \begin{re}\label{re:an_important_trick}
  (An Important Trick). When $m=k$, for all $q_h\in P_k(\T_h)$, denote by $q_{h,i}$ the restriction in $\Omega_{h,i}$ of $q_h$, 
 and let $q_{h,i}^E$ be the natural extension of $q_{h,i}$ defined on $\bbR^2$.
  One gets $T^k q_{h,i}(\bl x_h)=q_{h,i}^E(\rho_h(\bl x_h))$ for all $\bl x_h\in\Gamma_h, i=1,2$, which leads to a significant simplification in the implementation process, since the computation of the Taylor expansion becomes unnecessary.
\end{re}

\section{Numerical experiments}\label{sec6:numerical}

In this section, we present numerical results for the discrete schedule (\ref{Ph}) on two type of interfaces, as shown in Fig. \ref{Fig:cir_domain}. Body-fitted, unstructured triangular and quadrilateral meshes are used in the computation.
All experiments are done with polynomial degrees $\alpha=\beta=j$ and $\sigma=j-1$ for $j=1,2,3$. We use the technique in Remark \ref{re:an_important_trick}, which simplifies the implementation. Choosing different parameter pairs $(\kappa_1,\kappa_2)=(1,10),\,(10,1),\,(1,10^3),\,(10^3,1),\,(1,10^5)$,\, $(10^5,1)$. 

\begin{figure}[!htbp]
  \centering
  \subfigure[]
  {
  \includegraphics[height=4cm,width=4.2cm]{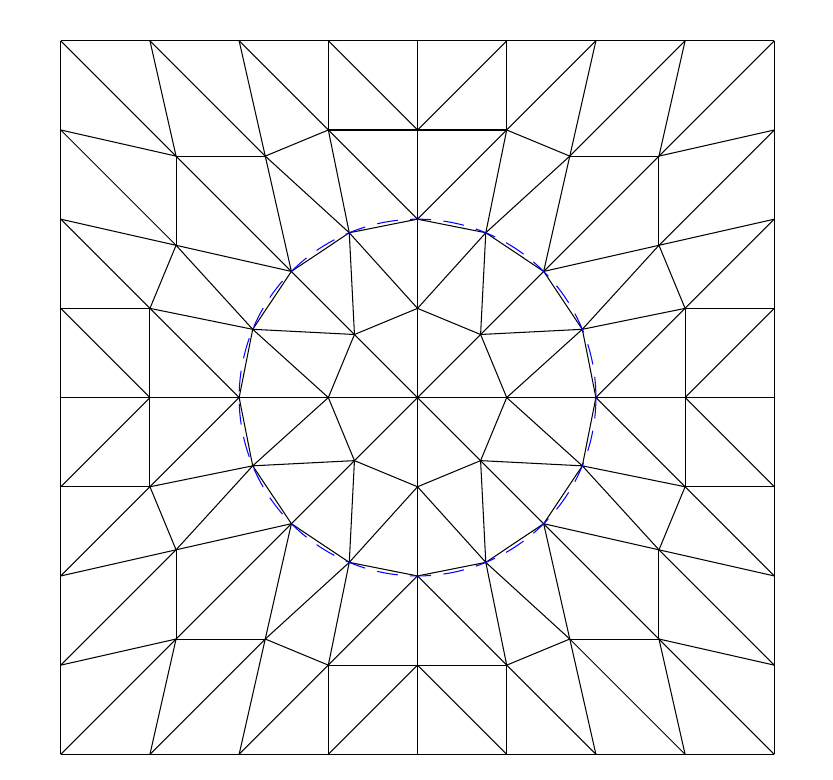}
  }\quad\qquad
  \subfigure[]
  {
  \includegraphics[height=3.85cm,width=4.0cm]{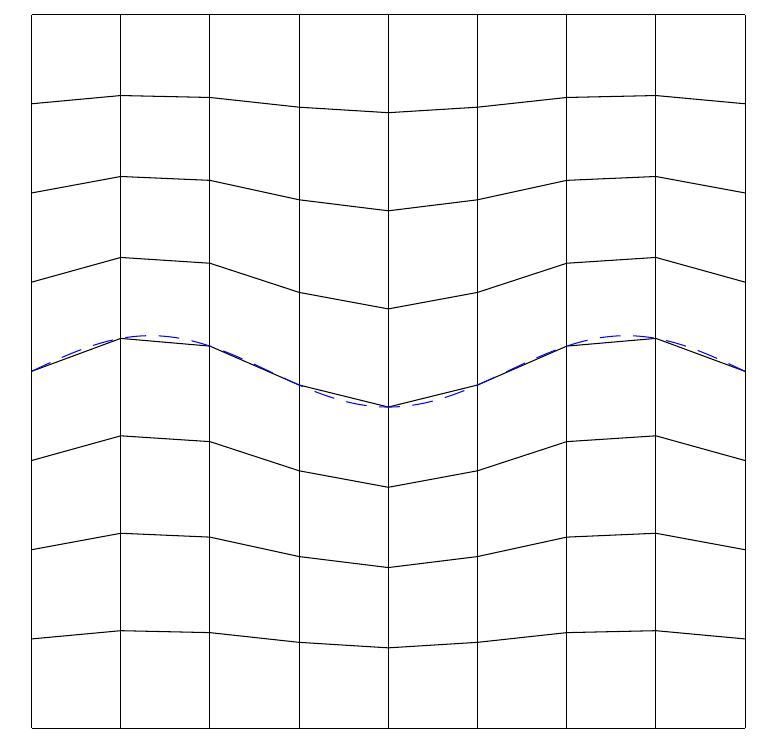}
  }
  \caption{(a). An example of triangular meshes ($h$=1/4) on domain with circular interface (blue dashed curve). (b). An example of quadrilateral mesh ($h$=1/4) on domain with curved interface $\Gamma$ (blue dashed curve).}
  \label{Fig:cir_domain}
  \end{figure}

\vspace{0.2cm}
\noindent${\bf{Example ~1. ~Circular ~interface}}$. 
\vspace{0.2cm}

We consider Problem (\ref{primal_problem_interface}) on domain $\Omega=[-1,1]\times [-1,1]$ with a circular interface $\Gamma: r^2=x^2+y^2,\,r=\frac{1}{2}$ 
and take the exact solution 
\begin{equation}
p=
  \left\{
  \begin{aligned}
  &\frac{(0.75x-x(x^2+y^2))x^2y^3}{\kappa_1}, \qquad &\text{if}\,\, x^2+y^2< r^2,\\
  &\frac{(x^2-1)^2(y^2-1)^2(x^2+y^2-0.25)^2}{\kappa_2}, \qquad &\text{if}\,\,x^2+y^2> r^2.
  \end{aligned}
  \right.
  \end{equation}

\vspace{0.2cm}
\noindent${\bf{Example ~2. ~Curved ~and ~wavy ~interface}}$. 
\vspace{0.2cm}

Consider the exact solution of Problem (\ref{primal_problem_interface}) on $\Omega = [0,1]\times [-\frac{1}{2},\frac12]$ 
with a curved interface $\Gamma :y =\dfrac{1}{20}\sin(3\pi x)$ 
\begin{equation}
p=
  \left\{
  \begin{aligned}
  &\frac{x^2(x-1)^2(y^2-\frac{1}{4})^2\sin(2\pi x)\sin(2\pi y)}{\kappa_1}, \qquad &\text{if}\,\, y< \frac{1}{20}\sin(3\pi x),\\
  &\frac{x^2(x-1)^2(y^2-\frac{1}{4})^2\cos(\pi x)\sin(\pi y)}{\kappa_2}, \qquad &\text{if}\,\,y> \frac{1}{20}\sin(3\pi x).
  \end{aligned}
  \right.
  \end{equation}

We first test Example 1 on meshes illustrated in Fig. \ref{Fig:cir_domain}(a), for various sizes of $h$, using the WG-MFEM scheme (\ref{Ph}) with boundary correction.
The errors and convergence orders in norms $\|\cdot\|_{0,h}$ and $\|\cdot\|_{0,\Omega_h}$ for the velocity and pressure fields are presented in Tabs. \ref{tab:eg1_cir_1}- \ref{tab:eg1_cir_3}. From the numerical results, 
an $O(h^k)$ optimal convergence is observed, which agrees well with the theoretical results in Theorem \ref{thm:energy_err}. 

To better examine this corrected method, we further test Example 2 on quadrilateral meshes shown in Fig. \ref{Fig:cir_domain}(b). Numerical results of the correction method are reported in Tabs. \ref{tab:eg2_sin_k1}-\ref{tab:eg2_sin_k3}. Again, an optimal $O(h^k)$ convergence is observed, as predicted in Theorem \ref{thm:energy_err}. Moreover, we test Example 2 with original the WG-MFEM discretization 
on the approximated interface without any boundary correction, where the interface conditions $g_D$ and $g_N$
are simply mapped to $\Gamma_h$ and weakly imposed.
The results are reported in Tab. \ref{tab:eg2_sin_k31}, where we
see an $O(h^{1/2})$ convergence rate in the energy-norm for velocity and an $O(h^2)$ convergence rate in the $L^2$-norm for pressure. This scheme exhibits a loss of accuracy as expected.

\begin{table}[!htbp]
\caption{\small\textit{Errors $\|\bl e_u\|_{0,h}$ and $\| e_p\|_{0,\Omega_h}$ of Example 1 with boundary correction and respect to different parameters $\kappa$ and $k=1$.}}
\label{tab:eg1_cir_1}
\begin{center}
{\small
\begin{tabular}{c|c | c| c| c|c}
\hline
\multirow{2}{*}{$(\kappa_1,\kappa_2)$}&\multirow{2}{*}{$h$}&\multicolumn{4}{c}{$k=1$}\\
\cline{3-6}
 &~ & $\|\bl e_u\|_{0,h}$  & order  & $\| e_p\|_{0,\Omega_h}$ & order \\
\cline{1-6}
\multirow{4}{*}{$(1,10)$}&
 1/8 & 3.60e-01   &   --     &   2.47e-01   &   -- \\
~& 1/16 & 1.90e-01 &   0.92   &   1.13e-01 &   1.12\\
~&1/32& 9.63e-02   &   0.98   &   5.51e-02   &   1.04\\
~&1/64& 4.83e-02   &   1.00   &   2.74e-02   &   1.01 \\
\cline{1-6}
\multirow{4}{*}{$(10,1)$}&
 1/8 & 3.74e-01   &   --     &   2.52e-01   &   --    \\
~& 1/16 & 1.98e-01 &   0.92   &  1.14e-01 &   1.14 \\
~&1/32& 1.00e-01  &   0.98   &   5.52e-02  &   1.05\\
~&1/64& 5.04e-02   &  0.99   &   2.73e-02   &  1.01 \\
\cline{1-6}
\multirow{4}{*}{$(1,10^3)$}&
 1/8 & 3.58e-01   &   --     &   3.26e-01   &   --   \\
~& 1/16 & 1.89e-01 &   0.92   &   1.70e-01 &   0.94\\
~&1/32& 9.58e-02  &   0.98   &   8.59e-02  &   0.98\\
~&1/64& 4.81e-02   &  1.00   &   4.31e-02   &  1.00  \\
\cline{1-6}
\multirow{4}{*}{$(10^3,1)$}&
 1/8 & 3.74e-01   &   --     &   2.52e-01   &   --  \\
~& 1/16 & 1.98e-01 &   0.92   &   1.14e-01 &   1.14 \\
~&1/32& 1.00e-01  &   0.98   &   5.52e-02  &   1.05\\
~&1/64& 5.04e-02   &  0.99   &   2.73e-02   &  1.01 \\
\cline{1-6}
\multirow{4}{*}{$(1,10^5)$}&
 1/8 & 3.58e-01   &   --     &   3.28e-01   &   -- \\
~& 1/16 & 1.89e-01 &   0.92   &   1.71e-01 &   0.94\\
~&1/32& 9.58e-02  &   0.98   &   8.67e-02  &   0.98\\
~&1/64& 4.81e-02   &  1.00   &   4.35e-02   &  1.00 \\
\cline{1-6}
\multirow{4}{*}{$(10^5,1)$}&
 1/8 & 3.74e-01   &   --     &   2.52e-01   &   -- \\
~& 1/16 & 1.98e-01 &   0.92   &  1.14e-01 &   1.14\\
~&1/32& 1.00e-01  &   0.98   &   5.52e-02  &   1.05\\
~&1/64& 5.04e-02   &  0.99   &   2.73e-02   &  1.01  \\
\cline{1-6}
\end{tabular}}
\end{center}
\end{table}

\begin{table}[!htbp]
\caption{\small\textit{Errors $\|\bl e_u\|_{0,h}$ and $\| e_p\|_{0,\Omega_h}$ of Example 1 with boundary correction and respect to different parameters $\kappa$ and $k=2$.}}
\label{tab:eg1_cir_2}
\begin{center}
{\small
\begin{tabular}{c|c | c| c| c|c}
\hline
\multirow{2}{*}{$(\kappa_1,\kappa_2)$}&\multirow{2}{*}{$h$}&\multicolumn{4}{c}{$k=2$}\\
\cline{3-6}
 &~ & $\|\bl e_u\|_{0,h}$  & order  & $\| e_p\|_{0,\Omega_h}$ & order \\
\cline{1-6}
\multirow{4}{*}{$(1,10)$}&
 1/8 &    5.40e-02   &   --     &   4.00e-02   &   --    \\
~& 1/16 &   1.39e-02   &   1.96   &   1.11e-02   &   1.85 \\
~&1/32&   3.50e-03   &   1.99   &   2.85e-03   &   1.96\\
~&1/64&   8.77e-04   &   2.00   &   7.18e-04   &   1.99 \\
\cline{1-6}
\multirow{4}{*}{$(10,1)$}&
 1/8 &   5.49e-02   &   --     &   4.00e-02   &   --     \\
~& 1/16 &   1.41e-02   &   1.96   &   1.11e-02   &   1.85 \\
~&1/32&   3.56e-03   &   1.99   &   2.85e-03   &   1.96\\
~&1/64&   8.91e-04   &   2.00   &   7.18e-04   &   1.99 \\
\cline{1-6}
\multirow{4}{*}{$(1,10^3)$}&
 1/8 &   5.40e-02   &   --     &  5.00e-02   &   --   \\
~& 1/16 &   1.39e-02   &   1.96   &    1.23e-02   &   2.03  \\
~&1/32&   3.50e-03   &   1.99   &   3.07e-03   &   2.00 \\
~&1/64&   8.77e-04   &   2.00   &   7.68e-04   &   2.00  \\
\cline{1-6}
\multirow{4}{*}{$(10^3,1)$}&
 1/8 &   5.49e-02   &   --     &  4.00e-02   &   --  \\
~& 1/16 &   1.41e-02   &   1.96   &   1.11e-02   &   1.85\\
~&1/32&   3.56e-03   &   1.99   &   2.85e-03   &   1.96\\
~&1/64&   8.91e-04   &   2.00   &   7.18e-04   &   1.99\\
\cline{1-6}
\multirow{4}{*}{$(1,10^5)$}&
 1/8 &   5.40e-02   &   --     &   5.04e-02   &   --   \\
~& 1/16 &   1.39e-02   &   1.96   &   1.23e-02   &   2.03\\
~&1/32&   3.50e-03   &   1.99   &   3.08e-03   &   2.00\\
~&1/64&   8.77e-04   &   2.00   &   7.70e-04   &   2.00 \\
\cline{1-6}
\multirow{4}{*}{$(10^5,1)$}&
 1/8 &   5.49e-02   &   --     &   4.00e-02   &   --\\
~& 1/16 &   1.41e-02   &   1.96   &   1.11e-02   &   1.85\\
~&1/32&   3.56e-03   &   1.99   &   2.85e-03   &   1.96\\
~&1/64&   8.91e-04   &   2.00   &   7.18e-04   &   1.99 \\
\cline{1-6}
\end{tabular}}
\end{center}
\end{table}

\begin{table}[!htbp]
\caption{\small\textit{Errors $\|\bl e_u\|_{0,h}$ and $\| e_p\|_{0,\Omega_h}$ of Example 1 with boundary correction and respect to different parameters $\kappa$ and $k=3$.}}
\label{tab:eg1_cir_3}
\begin{center}
{\small
\begin{tabular}{c|c | c| c| c|c}
\hline
\multirow{2}{*}{$(\kappa_1,\kappa_2)$}&\multirow{2}{*}{$h$}&\multicolumn{4}{c}{$k=3$}\\
\cline{3-6}
 &~ & $\|\bl e_u\|_{0,h}$  & order  & $\| e_p\|_{0,\Omega_h}$ & order \\
\cline{1-6}
\multirow{4}{*}{$(1,10)$}&
 1/8 & 4.58e-03   &   --   &   7.84e-03   &   -- \\
~& 1/16 & 6.25e-04   &   2.88   &   1.03e-03   &   2.93\\
~&1/32& 8.02e-05   &   2.96   &   1.30e-04   &   2.98\\
~&1/64& 1.01e-05   &   2.99   &   1.63e-05   &   3.00 \\
\cline{1-6}
\multirow{4}{*}{$(10,1)$}&
 1/8 & 4.91e-03   &   --   &   7.86e-03   &   -- \\
~& 1/16 &  6.70e-04   &   2.87  &   1.03e-03   &   2.93\\
~&1/32& 8.60e-05   &   2.96  &   1.30e-04   &   2.98\\
~&1/64& 1.08e-05   &   2.99  &   1.63e-05   &   3.00 \\
\cline{1-6}
\multirow{4}{*}{$(1,10^3)$}&
 1/8 & 4.58e-03   &   --  &   3.92e-03   &   -- \\
~& 1/16 & 6.24e-04   &   2.86   &   4.86e-04   &   3.01\\
~&1/32& 8.01e-05   &   2.96   &   6.07e-05   &   3.00\\
~&1/64& 1.01e-05   &   2.99   &   7.59e-06   &   3.00 \\
\cline{1-6}
\multirow{4}{*}{$(10^3,1)$}&
 1/8    & 4.92e-03   &   --   &    7.85e-03   &   -- \\
~& 1/16 &6.70e-04   &   2.88   &   1.03e-03   &   2.93\\
~&1/32  &8.60e-05   &   2.96   &   1.30e-04   &   2.98\\
~&1/64  &1.08e-05   &   2.99  &   1.63e-05   &   3.00  \\
\cline{1-6}
\multirow{4}{*}{$(1,10^5)$}&
 1/8    &4.59e-03   &   --     &   3.74e-03   &   -- \\
~& 1/16 &6.24e-04   &   2.88   &   4.59e-04   &   3.03\\
~&1/32 &8.01e-05   &   2.96    &   5.73e-05   &   3.00\\
~&1/64 &1.01e-05   &   2.99    &   7.16e-06   &   3.00 \\
\cline{1-6}
\multirow{4}{*}{$(10^5,1)$}&
 1/8 &5.59e-03    &   --     &   7.85e-03   &   -- \\
~& 1/16 &7.30e-04 &   2.94   &   1.03e-03   &   2.93\\
~&1/32&9.04e-05   &   3.01   &   1.30e-04   &   2.98\\
~&1/64&1.13e-05   &   3.00  &   1.63e-05   &   3.00 \\
\cline{1-6}
\end{tabular}}
\end{center}
\end{table}

\begin{table}[!htbp]
\caption{\small\textit{Errors $\|\bl e_u\|_{0,h}$ and $\| e_p\|_{0,\Omega_h}$ of Example 2 with boundary correction and respect to different parameters $\kappa$ and $k=1$.}}
\label{tab:eg2_sin_k1}
\begin{center}
{\small
\begin{tabular}{c|c | c| c| c|c}
\hline
\multirow{2}{*}{$(\kappa_1,\kappa_2)$}&\multirow{2}{*}{$h$}&\multicolumn{4}{c}{$k=1$}\\
\cline{3-6}
 &~ & $\|\bl e_u\|_{0,h}$  & order  & $\| e_p\|_{0,\Omega_h}$ & order \\
\cline{1-6}
\multirow{4}{*}{$(1,10)$}&
 1/8 & 1.13e-02   &    --    &   4.06e-01   &    --  \\
~& 1/16 &  6.00e-03   &   0.92   &   2.04e-01   &   0.99\\
~&1/32& 3.02e-03   &   0.99   &   1.02e-01   &   1.00\\
~&1/64&  1.51e-03   &   1.00   &   5.10e-02   &   1.00 \\
\cline{1-6}
\multirow{4}{*}{$(10,1)$}&
1/8   &   1.11e-02   &   --    &   3.61e-01   &   --    \\
~& 1/16    &   5.83e-03   &   0.92   &   1.77e-01   &   1.03   \\
~& 1/32    &   2.93e-03   &   0.99   &   8.76e-02   &   1.02   \\
~& 1/64    &   1.46e-03   &   1.00   &   4.36e-02   &   1.01   \\
\cline{1-6}
\multirow{4}{*}{$(1,10^3)$}&
1/8&     1.14e-02   &   --   &   4.04e-01   &  --   \\
~&1/16  &   6.02e-03   &   0.92   &   2.04e-01   &   0.98   \\
~&1/32   &   3.03e-03   &   0.99   &   1.02e-01   &   1.00   \\
~&1/64   &   1.51e-03   &   1.00   &   5.11e-02   &   1.00   \\
\cline{1-6}
\multirow{4}{*}{$(10^3,1)$}&
1/8   &   1.11e-02   &   --    &   3.41e-01   &  --    \\
~&1/16   &   5.83e-03   &   0.93   &   1.72e-01   &   0.99   \\
~&1/32   &   2.93e-03   &   0.99   &   8.60e-02   &   1.00   \\
~&1/64   &   1.47e-03   &   1.00   &   4.30e-02   &   1.00   \\
\cline{1-6}
\multirow{4}{*}{$(1,10^5)$}&
1/8   &   1.14e-02   &   --    &   4.05e-01   &  --    \\
~&1/16   &   6.02e-03   &   0.92   &   2.04e-01   &   0.99   \\
~&1/32  &   3.03e-03   &   0.99   &   1.02e-01   &   1.00   \\
~&1/64   &   1.52e-03   &   1.00   &   5.11e-02   &   1.00   \\
\cline{1-6}
\multirow{4}{*}{$(10^5,1)$}&
1/8   &   1.11e-02   &   --    &   3.41e-01   &  --    \\
~&1/16   &   5.83e-03   &   0.93   &   1.72e-01   &   0.99   \\
~&1/32   &   2.94e-03   &   0.99   &   8.60e-02   &   1.00   \\
~&1/64    &   1.47e-03   &   1.00   &   4.30e-02   &   1.00   \\
\cline{1-6}
\end{tabular}}
\end{center}
\end{table}

\begin{table}[!htbp]
\caption{\small\textit{Errors $\|\bl e_u\|_{0,h}$ and $\| e_p\|_{0,\Omega_h}$ of Example 2 with boundary correction and respect to different parameters $\kappa$ and $k=2$.}}
\label{tab:eg2_sin_k2}
\begin{center}
{\small
\begin{tabular}{c|c | c| c| c|c}
\hline
\multirow{2}{*}{$(\kappa_1,\kappa_2)$}&\multirow{2}{*}{$h$}&\multicolumn{4}{c}{$k=2$}\\
\cline{3-6}
 &~ & $\|\bl e_u\|_{0,h}$  & order  & $\| e_p\|_{0,\Omega_h}$ & order \\
\cline{1-6}
\multirow{4}{*}{$(1,10)$}&
 1/8   &   2.86e-03   &   --    &   9.69e-02   &   --   \\
~& 1/16   &   7.52e-04   &   1.93   &   2.53e-02   &   1.94   \\
~&1/32    &   1.90e-04   &   1.99   &   6.39e-03   &   1.98   \\
~&1/64    &   4.74e-05   &   2.00   &   1.60e-03   &   2.00   \\
\cline{1-6}
\multirow{4}{*}{$(10,1)$}&
 1/8  &   2.74e-03   &   --    &   7.86e-02   &   --    \\
~& 1/16   &   7.18e-04   &   1.93   &   2.04e-02   &   1.95   \\
~&1/32   &   1.81e-04   &   1.99   &   5.15e-03   &   1.99   \\
~&1/64   &   4.52e-05   &   2.00   &   1.29e-03   &   2.00   \\
\cline{1-6}
\multirow{4}{*}{$(1,10^3)$}&
1/8    &   3.02e-03   &   --    &   9.66e-02   &  --    \\
~& 1/16    &   7.91e-04   &   1.94   &   2.53e-02   &   1.93   \\
~& 1/32   &   1.98e-04   &   2.00   &   6.39e-03   &   1.98   \\
~& 1/64   &   4.87e-05   &   2.02   &   1.60e-03   &   2.00   \\

\cline{1-6}
\multirow{4}{*}{$(10^3,1)$}&
1/8   &   2.75e-03   &   --    &   7.73e-02   &   --    \\
~& 1/16   &   7.16e-04   &   1.94   &   2.01e-02   &   1.95   \\
~& 1/32   &   1.80e-04   &   1.99   &   5.07e-03   &   1.99   \\
~& 1/64   &   4.51e-05   &   2.00   &   1.27e-03   &   2.00   \\
\cline{1-6}
\multirow{4}{*}{$(1,10^5)$}&

1/8   & 1.69e-02   &   --    &   9.65e-02   &   --   \\
~&1/16   &   3.99e-03   &   2.09   &   2.55e-02   &   1.92   \\
~&1/32  &   1.03e-03   &   1.95   &   6.41e-03   &   1.99   \\
~&1/64   & 2.51e-04   &   2.04   &   1.62e-03   &   1.99   \\ 
\cline{1-6}
\multirow{4}{*}{$(10^5,1)$}&
1/8   &   1.46e-02   &   --    &   7.73e-02   &   --   \\
~&1/16   &   3.84e-03   &   1.93   &   2.02e-02   &   1.94   \\
~&1/32   &   9.80e-04   &   1.97   &   5.08e-03   &   1.99   \\
~&1/64    &   2.45e-04   &   2.00   &   1.28e-03   &   1.99   \\
\cline{1-6}
\end{tabular}}
\end{center}
\end{table}

\begin{table}[!htbp]
\caption{\small\textit{Errors $\|\bl e_u\|_{0,h}$ and $\| e_p\|_{0,\Omega_h}$ of Example 2 with boundary correction and respect to different parameters $\kappa$ and $k=3$.}}
\label{tab:eg2_sin_k3}
\begin{center}
{\small
\begin{tabular}{c|c | c| c| c|c}
\hline
\multirow{2}{*}{$(\kappa_1,\kappa_2)$}&\multirow{2}{*}{$h$}&\multicolumn{4}{c}{$k=3$}\\
\cline{3-6}
 &~ & $\|\bl e_u\|_{0,h}$  & order  & $\| e_p\|_{0,\Omega_h}$ & order \\
\cline{1-6}
\multirow{4}{*}{$(1,10)$}&
 1/8   &   5.46e-04   &   --    &   1.90e-02   &   --    \\
~& 1/16    &   7.21e-05   &   2.92   &   2.49e-03   &   2.94   \\
~&1/32    &   9.12e-06   &   2.98   &   3.16e-04   &   2.98   \\
~&1/64    &   1.14e-06   &   3.00   &   3.96e-05   &   2.99   \\
\cline{1-6}
\multirow{4}{*}{$(10,1)$}&
 1/8     &   5.14e-04   &   --   &   1.38e-02   &  --    \\
~& 1/16   &   6.78e-05   &   2.92   &   1.83e-03   &   2.92   \\
~&1/32  &   8.57e-06   &   2.98   &   2.32e-04   &   2.98   \\
~&1/64   &   1.07e-06   &   3.00   &   2.91e-05   &   2.99   \\
\cline{1-6}
\multirow{4}{*}{$(1,10^3)$}&
1/8  &   6.13e-04   &   --    &   1.92e-02   &   --    \\
~& 1/16    &   7.58e-05   &   3.01   &   2.49e-03   &   2.94   \\
~&1/32    &   9.31e-06   &   3.03   &   3.16e-04   &   2.98   \\
~&1/64    &   1.15e-06   &   3.01   &   3.97e-05   &   2.99   \\

\cline{1-6}
\multirow{4}{*}{$(10^3,1)$}&
1/8   &   5.17e-04   &   --    &   1.35e-02   &   --   \\
~& 1/16   &   6.75e-05   &   2.94   &   1.79e-03   &   2.92   \\
~&1/32    &   8.53e-06   &   2.99   &   2.27e-04   &   2.98   \\
~&1/64   &   1.07e-06   &   3.00   &   2.84e-05   &   2.99   \\
\cline{1-6}
\multirow{4}{*}{$(1,10^5)$}&

1/8    &   4.19e-03   &   --   &   1.92e-02   &  --    \\
~&1/16   &   4.81e-04   &   3.12   &   2.49e-03   &   2.94   \\
~&1/32   &   5.65e-05   &   3.09   &   3.16e-04   &   2.98   \\
~&1/64   &   6.90e-06   &   3.03   &   3.97e-05   &   2.99   \\  
\cline{1-6}
\multirow{4}{*}{$(10^5,1)$}&
1/8    &   3.21e-03   &   --    &   1.35e-02   &   --   \\
~&1/16   &   4.21e-04   &   2.93   &   1.79e-03   &   2.92   \\
~&1/32   &   5.34e-05   &   2.98   &   2.27e-04   &   2.98   \\
~&1/64   &   6.70e-06   &   2.99   &   2.84e-05   &   2.99   \\
\cline{1-6}
\end{tabular}}
\end{center}
\end{table}

\begin{table}[!htbp]
\caption{\small\textit{Errors $\|\bl e_u\|_{0,h}$ and $\| e_p\|_{0,\Omega_h}$ of Example 2 without boundary correction and respect to different parameters $\kappa$ and $k=3$.}}
\label{tab:eg2_sin_k31}
\begin{center}
{\small
\begin{tabular}{c|c | c| c| c|c}
\hline
\multirow{2}{*}{$(\kappa_1,\kappa_2)$}&\multirow{2}{*}{$h$}&\multicolumn{4}{c}{$k=3$}\\
\cline{3-6}
 &~ & $\|\bl e_u\|_{0,h}$  & order  & $\| e_p\|_{0,\Omega_h}$ & order \\
\cline{1-6}
\multirow{4}{*}{$(1,10)$}&
 1/8   &   9.44e-04   &   --   &   2.20e-02   &   -- \\
~& 1/16   &   6.16e-04   &   0.62   &   3.85e-03   &   2.52   \\
~& 1/32  &   4.41e-04   &   0.48   &   8.02e-04   &   2.26   \\
~& 1/64   &   3.13e-04   &   0.49   &   1.88e-04   &   2.09   \\
\cline{1-6}
\multirow{4}{*}{$(10,1)$}&
 1/8    &   9.21e-04   &   --    &   1.71e-02   &   --   \\
~& 1/16   &   6.15e-04   &   0.58   &   3.23e-03   &   2.40   \\
~& 1/32   &   4.41e-04   &   0.48   &   7.13e-04   &   2.18   \\
~& 1/64   &   3.13e-04   &   0.49   &   1.72e-04   &   2.05   \\
\cline{1-6}
\end{tabular}}
\end{center}
\end{table}

\section{Conclusions}\label{sec7:conclusion}

We have investigated the weak Galerkin mixed finite element method for elliptic interface equations with curved interfaces, where a boundary value corrections technique has been used to avoid numerical integration formula on curved elements. The non-homogeneous interface condition is weakly imposed in the variational formulation using the penalty method.
The parameters in the weak Galerkin formulation can be chosen arbitrarily without affecting the approximation results.
With this work, an optimal convergence rate in the energy norm is obtained.


\begin{thebibliography}{00}



\bibitem{vem2013}
L.~Beir\~{a}o da~Veiga, F.~Brezzi, A.~Cangiani, W.~Manzini, L.~D.~Marini,
  and A.~Russo. Basic principles of virtual element methods,
{Mathematical Models and Methods in Applied Sciences}, 23 (2013) 199-214.
 
\bibitem{da_veiga_2020}
L.~Beir\~{a}o da~Veiga, F.~Brezzi, L.~D.~Marini and A. Russo,
Polynomial preserving virtual elements with curved edges,
{Mathematical Models and Methods in Applied Sciences}, 30 (2020) 1555-1590.

\bibitem{Yi_Liu_nonconforming_VEM}
L.~Beir\~{a}o da~Veiga, Y. Liu, L. Mascotto,
and A. Russo1, The nonconforming virtual element method with curved
edges, Journal of Scientific Computing (2024) 99:23.

\bibitem{da_veiga_2019}
L.~Beir\~{a}o da~Veiga,  A. Russo, and  G. Vacca,
  The virtual element method with curved edges,
  {ESAIM: Mathematical Modelling and Numerical Analysis}, 53 (2019) 375-404. 

\bibitem{High_order_VEM_2D3D}
  S. Bertoluzza, M. Pennacchio, and D. Prada,
   Weakly imposed Dirichlet boundary conditions for 2D and 3D virtual elements,
{Computer Methods in Applied Mechanics and Engineering}, 400 (2022) 115454.


\bibitem{book:Boffi_Brezzi}
D. Boffi, F. Brezzi, and M. Fortin, Mixed Finite Element Methods and Applications, Springer,
2013.



\bibitem{Blair_Bounds_for_the_change_in_the_solutions1973}
J. J. Blair, Bounds for the change in the solutions of second order elliptic PDE's when the boundary is perturbed,
{SIAM Journal on Applied Mathmetics} 24 (1973) 277-285.


  \bibitem{1972}
 J. H. Bramble, T. Dupont, and V. Thom\'{e}e,
 Projection methods for Dirichlet's problem in approximating polygonal domains with boundary-value corrections,
 {Mathematics of Computation}, 26 (1972) 869-879.

 \bibitem{Bramble_King1994}
 J. H. Bramble and J. T. King,
 A robust finite element method for nonhomogeneous Dirichlet problems in domains with curved boundaries,
 {Mathematics of Computation}, 63 (1994) 1-17.


 \bibitem{FEM_Brenner}
S.~C. Brenner and L.~R. Scott,
{The mathematical theory of finite element methods}, Springer, New York, 2008.


\bibitem{Burman_boundary_value_correction_2018}
E. Burman, P. Hansbo, and M. G. Larson,
A cut finite element method with boundary value correction,
{Mathematics of Computation}, 87 (2018) 633-657.

\bibitem{Puppi_mixed}
E. Burman and R. Puppi,
Two mixed finite element formulations for the weak imposition of the Neumann boundary conditions for the Darcy flow,
Journal of Numerical Mathematics, 30 (2022) 141-162.

\bibitem{Cangiani_hp_Version}
  A. Cangiani, Z. Dong, and E. Georgoulis, hp-Version discontinuous Galerkin methods on essentially arbitrarily-shaped elements, 
  {Mathematics of Computation}, 91 (2022) 1-35.

\bibitem{Pei_Cao_XFEM_CMA}
P. Cao, J. Chen, and F. Wang, An extended mixed finite element method for elliptic interface problems,
Computers Mathematics with Applications, 113 (2022) 148-159.

\bibitem{Wang_DS_WG}
  W. Chen, F. Wang, and Y. Wang. Weak Galerkin method for the coupled Darcy-Stokes flow.
IMA Journal of Numerical Analysis, 36 (2016) 897-921.

 \bibitem{Gunzburger_2019}
 J. Cheung, M. Perego, P. Bochev, and M. Gunzburger,
 Optimally accurate higher-order finite element methods on polytopial approximations of domains with smooth boundaries,
 {Mathematics of Computation}, 88 (2019) 2187-2219.

\bibitem{Cockburn_Boundary_conforming_DG_2009}
B. Cockburn, D. Gupta, and F. Reitich,
 Boundary-conforming discontinuous Galerkin methods via extensions from subdomains,
 {Journal of Scientific Computing}, 42 (2009) 144-184.

\bibitem{DG_2009}
B. Cockburn, J. Gopalakrishnan, and R. Lazarov, Unified hybridization of discontinuous
Galerkin, mixed, and continuous Galerkin methods for second order elliptic problems.
SIAM Journal on Numerical Analysis, 47 (2009) 1319-1365.



\bibitem{Cockburn2012}
 B. Cockburn and M. Solano,
 Solving Dirichlet boundary-value problems on curved domains by extensions from subdomains,
{SIAM Journal on Scientific Computing}, 34 (2012) A497-A519.


  \bibitem{Isogeometric_Analysis_Cottrell_2009}
 J. A. Cottrell, T. J. R. Hughes, and Y. Bazilevs,
 Isogeometric Analysis: Toward Integration of CAD and FEA,
 John Wiley $\&$ Sons, Ltd., Chichester, 2009.
%

\bibitem{VEM_MFEM_curve}
F. Dassia, A. Fumagallib, D. Losapiob, S. Scial\`{o}c, A. Scottib, and G. Vacca, The mixed virtual element method on curved edges in two dimensions, Computer Methods in Applied Mechanics and Engineering, 386 (2021) 114098.


\bibitem{isoparametric_1968}
 I. Ergatoudis, B. Irons, and O. Zienkiewicz, Curved, isoparametric, quadrilateral elements for finite element analysis,
{International Journal of Solids and Structures}, 4 (1968) 31-42.

\bibitem{DG_1}
G. Gassnera, F. L$\ddot{o}$rchera, C. D. Munza, and J. Hesthaven, Polymorphic nodal elements and their application in discontinuous Galerkin methods, Journal of Computational Physics, 228 (2009), 1573-1590.

\bibitem{book:1986_NS}
V. Girault and P. A. Raviart, Finite element methods for Navier Stokes equations, Springer Series in Computational Mathematics, Vol. 5, Springer-Verlag, 1986.
\bibitem{Guan_WG_curv_2023}
Q. Guan, G. Queisser, and W. Zhao, Weak Galerkin finite element method for second order problems on curvilinear polytopal meshes with Lipschitz continuous edges or faces, Computers and Mathematics with Applications, 148 (2023) 282-292.

\bibitem{hou_JSC}
Y. Hou, Y. Liu, and Y. Wang, A high-order shifted boundary virtual element method for Poisson equations on 2D curved domains, Journal of Scientific Computing, 99 (2024) 85.

\bibitem{hou_BDM}
Y. Hou and Y. Wang, An arbitrary order mixed finite element method with
boundary value correction for the Darcy flow on curved
domains, arXiv:2412.19411 [math.NA].

\bibitem{Isogeometric_analysis_Hughes_2005}
T. J. R. Hughes, J. A. Cottrell, and Y. Bazilevs,
 Isogeometric analysis: CAD, finite elements, NURBS, exact geometry and mesh refinement,
 {Computer Methods in Applied Mechanics and Engineering}, 194 (2005) 4135-4195.



\bibitem{Optimal_isoparametric_finite_elements_1986}
 M. Lenoir,
 Optimal isoparametric finite elements and error estimates for domains involving curved boundaries,
 {SIAM Journal on Numerical Analysis}, 23 (1986) 562-580.

\bibitem{Dan_Li_curve_2024}
D. Li, C. Wang, and J. Wang, Curved elements in weak Galerkin finite element methods, Computers and Mathematics with Applications, 153 (2024) 20-32.

\bibitem{Dan_Li_curve_interface_2024}
D. Li, C. Wang, and S. Zhang, Weak Galerkin methods for elliptic interface problems on curved polygonal partitions, Journal of Computational and Applied Mathematics, 450 (2024) 115995. 

\bibitem{SBM_interface}
K. Li, N. M. Atallah, G. A. Main, and G. Scovazzi, The shifted interface method: a flexible approach to embedded interface computations, International Journal for Numerical Methods in Engineering, 121 (2020) 492-518.

\bibitem{Yiliu_WG}
Y. Liu, W. Chen, and Y. Wang,
A weak Galerkin mixed finite element method for second order elliptic equations on 2D curved domains,
{Communications in Computational Physics}, 32 (2022) 1094-1128.

\bibitem{Yiliu_WG_SBM}
Y. Liu, Y. Wang, and Y. Hou,
Weak Galerkin Method With Boundary Value Correction, Preprint.


\bibitem{Part1}
A. Main and G. Scovazzi,
The shifted boundary method for embedded domain computations. Part I: Poisson and Stokes problems,
{Journal of Computational Physics}, 372 (2018) 972-995.

\bibitem{WG_Mu_Wang}
 L. Mu, X. Wang, and Y. Wang, Shape regularity conditions for polygonal/polyhedral meshes, exemplified in a discontinuous Galerkin discretization, Numerical Methods for Partial Differential Equations,
 (2015) 308-325.

\bibitem{WG_2015}
L. Mu, J. Wang, and X. Ye, Weak Galerkin finite element methods on polytopal meshes, International Journal of numerical analysis and modeling, 12 (2015), 31-53.

\bibitem{Lin_Mu_curve_2021}
L. Mu, J. Wang, and X. Ye, Weak Galerkin finite element with curved edges, Journal of Computational and Applied Mathematics, 381 (2021) 113038. 

\bibitem{well_posedness_Nicolaides}
R. A. Nicolaides, Existence, uniqueness and approximation for generalized saddle point problems, SIAM ournal on Numerical Analysis, 19 (1982) 349-357.



\bibitem{book:Extension}
E. M. Stein, Singular integrals and differentiability properties of functions, 
Princeton University Press, 2, 1970.

\bibitem{Strang_Berger_The_change_in_solution_due_to_change_in_domain1973}
G. Strang and A. E. Berger,
The change in solution due to change in domain,
Partial Differential Equations (D. C. Spencer, ed.), Proc. Sympos. Pure Math., vol. 23, Amer. Math. Soc, Providence, RI, (1973) 199-205.

\bibitem{Thomee_Polygonal_domain_approximation_in_Dirichlet's_problem1973}
V. Thom\'{e}e,
Polygonal domain approximation in Dirichlet's problem,
{Ima Journal of Applied Mathematics},
11 (1973) 33-44.

\bibitem{WG_newStokes}
T. Tian, Q. Zhai, and R. Zhang, A new modified weak Galerkin finite element scheme for solving the stationary Stokes equations, Journal of Computational and Applied Mathematics, 329 (2018) 268-279.

\bibitem{WG_elasticity}
 C. Wang, J. Wang, R. Wang, and R. Zhang, A locking-free weak Galerkin finite element method
for elasticity problems in the primal formulation, Journal of Computational and Applied Mathematics, 307 (2016) 346-366.

\bibitem{WG_2013}
J. Wang and X. Ye, A weak Galerkin finite element method for second-order elliptic problems, Journal of Computational and Applied Mathematics, 241 (2013) 103–115.

\bibitem{WG_MFEM_elliptic}
 J. Wang and X. Ye, A weak Galerkin mixed finite element method for second order elliptic problems, Mathematics of Computation, 83 (2014) 2101-2126.

\bibitem{WG_stokes}
J. Wang and X. Ye, A weak Galerkin finite element method for the Stokes equations, Advances in Computational Mathematics, 42 (2016) 155-174.


\end{thebibliography}
\end{document}